\def\a{\alpha}
\newcommand\be{\beta}
\newcommand\De{\Delta}
\def\th{\theta}
\newcommand\et{\eta}
\newcommand\ep{\epsilon}
\newcommand\ga{\gamma}
\newcommand\la{\lambda}
\newcommand\ph{\varphi}
\newcommand\rh{\rho}
\newcommand\si{\sigma}
\newcommand\ta{\tau}
\newcommand\Om{\Omega}
\newcommand\A{{\mathcal A}}
\newcommand\B{{\mathcal B}}
\newcommand\Reals{\mathbb R}
\newcommand\wb{w}
\newcommand{\sgn}{\operatorname{sgn}}
\newcommand\DA{D_A}
\newcommand\DB{D_B}
\newcommand\underlineDA{\underline{D}_A}
\newcommand\overlineDA{\overline{D}_A}
\newtheorem{theorem}{Theorem}
\newtheorem{lemma}[theorem]{Lemma}
\newtheorem{remark}[theorem]{Remark}
\newtheorem{proposition}[theorem]{Proposition}
\newtheorem{corollary}[theorem]{Corollary}
\newtheorem*{proposition*}{Proposition}
\newtheorem{definition}{Definition}
\numberwithin{equation}{section}
\numberwithin{theorem}{section}
\begin{document}
\parindent=15pt
\normalbaselineskip18pt
\baselineskip18pt
\global\hoffset=-15truemm
\global\voffset=-10truemm
\allowdisplaybreaks[2]
\title[Two-locus clines in a heterogeneous environment]{Two-locus clines maintained by diffusion and recombination in a heterogeneous environment}

\author{Linlin Su$^{1,\ast}$}
\author{King-Yeung Lam$^{2,\ast}$}
\author{Reinhard B\"{u}rger $^{3,\ddagger}$}

\footnotetext[1]{Department of Mathematics, Southern University of Science and Technology, Shenzhen, P. R. China}
\footnotetext[2]{Department of Mathematics,
Ohio State University, Columbus, OH 43210, USA}
\footnotetext[3]{Department of Mathematics, University of Vienna, 1090 Vienna, Austria}
\renewcommand{\thefootnote}{\fnsymbol{footnote}}
\footnotetext[1]{These authors contributed equally}
\footnotetext[3]{Corresponding author}
\renewcommand{\thefootnote}{\arabic{footnote}}

\email{sull@sustc.edu.cn, lam.184@osu.edu, reinhard.buerger@univie.ac.at}

\date{\today}

\begin{abstract}
We study existence and stability of stationary solutions of a system of semilinear parabolic partial differential equations that occurs in population genetics. It describes the evolution of gamete frequencies in a geographically structured population of migrating individuals in a bounded habitat. Fitness of individuals is determined additively by two recombining, diallelic genetic loci that are subject to spatially varying selection. Migration is modeled by diffusion. Of most interest are spatially non-constant stationary solutions, so-called clines. In a two-locus cline all four gametes are present in the population, i.e., it is an internal stationary solution. We provide conditions for existence and linear stability of a two-locus cline if recombination is either sufficiently weak or sufficiently strong relative to selection and diffusion. For strong recombination, we also prove uniqueness and global asymptotic stability. For arbitrary recombination, we determine the stability properties of the monomorphic equilibria, which represent fixation of a single gamete.
\end{abstract}
\maketitle

\noindent{\bf Keywords}: Selection; Migration; Recombination; Linkage disequilibrium; Geographical structure; Parabolic PDEs; Persistence theory, Perturbation theory

\smallskip
\noindent{\bf MSC 2010}: {35B40; 35K57; 92D10; 92D15}

\newpage

\setcounter{section}{0}

\section{Introduction}
A cline describes a gradual change in genotypic or phenotypic frequency as a function of spatial location. Clines often occur in species distributed along an environmental gradient, for instance in temperature, where alternative phenotypes or genotypes are better adapted to the different extremes of the environment. They are frequently observed in natural populations and are important objects of research in evolutionary biology and ecology (e.g.\ \cite{Adrion_etal2015}, \cite{Bedford_etal2015}, \cite{Endler1977}). Measurements of their shape admit inferences about the relative strength of migration and selection.

The mathematical theory of clines was initiated by Haldane \cite{Haldane1948}, who derived a reaction-diffusion equation for the equilibrium allele frequencies at a diallelic locus subject to spatially varying selection along a single spatial dimension. He computed the cline, the spatially non-constant solution, for special cases. The mathematical theory of clines became a very active research area in the 1970s, when the consequences of various assumptions about spatial variation in fitnesses and about migration patterns were investigated (Slatkin \cite{Slatkin1973}, Nagylaki \cite{N1975,N1976,N1978}). These authors derived parabolic partial differential equations to describe and study not only the allele frequencies at equilibrium, but also their evolution. At about the same time, and motivated by this work, Conley \cite{Conley1975}, Fleming \cite{Fleming1975}, Fife and Peletier \cite{Fife&Peletier1977,Fife&Peletier1981}, and Henry \cite{Henry1981} developed and employed advanced mathematical methods to investigate existence, uniqueness, and stability of clinal solutions under a variety of assumptions about fitnesses. We refer to spatially nonuniform stationary solutions of the parabolic PDE as clines.
More recently, Lou, Nagylaki, and their collaborators \cite{LN2002,LN2004,LN2006,LNS2010,Nakashima2016,Nakashima2018,NNS2010}  extended previous work in several directions by modeling migration by general elliptic operators on bounded domains in arbitrary dimensions, by admitting wide classes of fitness functions, by including dominance, and by studying multiallelic loci. Several of these extensions revealed qualitatively new features. The theory of one-locus clines has been reviewed in \cite{NL2008} and \cite{LNN2013}.

In the present work, we study two-locus clines. Understanding their properties is of biological relevance because many traits are determined by multiple genetic loci which undergo recombination. The resulting mathematical models are much more complex than one-locus models, because the interaction of selection and migration generates probabilistic associations (correlations) among these loci, so called linkage disequilibria, which are eroded in turn by recombination. We shall focus on the simplest case of two diallelic loci with additive fitnesses. The first study of a two-locus cline model is due to Slatkin \cite{Slatkin1975}, who showed numerically that the linkage disequilibrium generated between the two loci tends to steepen the cline. Barton \cite{Barton1983,Barton1986} derived some general results about the consequences of linkage on the linkage disequilibria among multiple loci and provided numerical results that can guide intuition. Most recently, B\"urger \cite{RB2017} analysed a two-locus model in which, following Haldane \cite{Haldane1948}, simple step functions are used to describe the spatial dependence of fitnesses along the real line. Using a singular-perturbation approach, an explicit approximation of the two-locus cline was obtained for the case of strong recombination. The steepening of the cline by linkage could be proved and quantified.

Our aim here is to develop a rigorous mathematical theory for the existence, uniqueness, and stability of two-locus clines on bounded domains in $\Reals^n$ for fitnesses depending on the spatial location in a general way. In Section 2, we introduce the basic model, which is formulated as a system of semilinear parabolic PDEs. In Section 3, we collect several preliminaries that will be used subsequently. Section 4 is devoted to the study of the boundary equilibria. These can be monomorphic equilibria, i.e., constant stationary solutions such that both loci are globally fixed for one allele, or clines at one locus with the second locus fixed for one or the other allele. For the monomorphic equilibria, stability and bifurcations are determined.

In Section 5, we investigate the case of no recombination. The results follow from the theory of diallelic and multiallelic one-locus models \cite{LN2002,LN2004,LN2006} and provide the basis for the investigation of clines maintained under weak recombination, which is the topic of Section 6. There, existence of an asymptotically stable two-locus cline is proved based on a regular perturbation argument. Finally, in Section 7, we treat strong recombination. This may be the biologically most frequently realized case because it applies when the loci are located on different chromosomes or on the same chromosome, but not close together. We prove existence, uniqueness, and global stability of a two-locus cline. In addition to standard elliptic and parabolic PDE methods, our proofs invoke perturbation techniques, persistence, and dynamical systems theory.
The article closes by a brief discussion and by mentioning some open problems.

\section{Model}\label{sec:model}
We consider a monoecious, diploid population that occupies a bounded, open domain $\Om\subset\Reals^n$ with $C^2$ boundary $\partial\Om$. Fitness of individuals depends on location, but is independent of time, population density, or genotype frequencies. It is determined by two diallelic loci, $\A$ and $\B$, which recombine at rate $r\ge0$. We model migration by diffusion and assume it is homogeneous, isotropic, and genotype-independent. If the migration variance is $\si^2$, the diffusion constant is $d=\tfrac12\si^2$ \cite{N1975,N1989}.

If the alleles at locus $\A$ are denoted by $A$ and $a$, and those at $\B$ by $B$ and $b$, then there are the four possible gametes $AB$, $Ab$, $aB$, and $ab$, which we label as $i=1$, 2, 3, and 4, respectively. We write $I=\{1,2,3,4\}$ for the set of gametes.
Let the frequency of gamete $i$ at position $x\in\Om$ and time $t$ be $p_i=p_i(x,t)$, where $p_i\ge0$ and $\sum_{i=1}^4 p_i=1$, and let $p=(p_1,p_2,p_3,p_4)^T$. We denote the usual measure of linkage disequilibrium by
\begin{equation}\label{eq:D}
	D = D(p) = p_1p_4-p_2p_3\,.
\end{equation}
If $w_{ij}(x)$ is the fitness of the diploid genotype $ij$ at location $x\in\Om$, then
\begin{equation}\label{w}
	w_i = w_i(x,p) = \sum_{j=1}^4 w_{ij}(x)p_j \;\text{ and }\; \wb = \wb(x,p) = \sum_{i=1}^4 w_ip_i
\end{equation}
are the marginal fitness of gamete $i$ and  the population mean fitness, respectively. As is biologically reasonable and common, throughout we posit $w_{ij}=w_{ji}$ and $w_{14}=w_{23}$, i.e., absence of position effects, and assume that every $w_{ij}$ is real valued and H\"older continuous, i.e., $w_{ij}\in C^\ga(\bar\Omega)$ for some $\ga\in(0,1)$.

\subsection{Evolutionary equations}
We assume that (i) the three evolutionary forces selection, migration, and recombination are of the same order of magnitude and sufficiently weak, (ii) migration is genotype independent, spatially uniform, and isotropic, and (iii) individuals mate locally at random so that Hardy-Weinberg proportions are obtained locally. By approximating the exact discrete-space discrete-time model (\cite{RB2009}, \cite{N2009}) by a continuous-space continuous-time model as in \cite{N1989}, the evolution of the gamete frequencies $p_i$, $i\in I$, is described by the following system of partial differential equations:
\begin{subequations}\label{dynamics_dsr_pi}
\begin{alignat}{2}
	&\partial_t p_i = d\De p_i + s S_i(x,p) - \et_i r D   &\quad&\text{ for } (x,t)\in\Om\times(0,\infty) \,, \label{dynamics_dsr_pi_a} \\
	&\partial_\nu p_i = 0 	&\quad&\text{ for } (x,t)\in\partial\Om\times(0,\infty)\,,  \label{dynamics_dsr_pi_b} \\
	&p_i(x,0)\ge0 \, \text{ and }  \sum_{i=1}^4 p_i(x,0)=1 &\quad&\text{ for $x\in\bar\Om$} \label{dynamics_dsr_pi_c}
\end{alignat}
\end{subequations}
(cf.\ \cite{Slatkin1975, LNN2013, RB2017}).
Here, $\De$ is the Laplace operator in $\Reals^n$, $d>0$ the diffusion constant, $s>0$ a measure of the strength of selection, $r\ge0$ the recombination rate,
\begin{equation}\label{et}
	\et_1=\et_4=-\et_2=-\et_3 = 1\,,
\end{equation}
and $\nu$ is the unit outer normal vector to the boundary $\partial\Om$. The terms $\et_i r D$ describe the effects of recombination (see Section \ref{sec:reco_LD}). The functions
\begin{equation}\label{S_i2}
	S_i(x,p) = p_i(w_i-\wb)
\end{equation}
arise from selection (see Section \ref{sec:assumptions_selection}).

In many situations, it will be more convenient to scale away $d$ because we focus on the role of recombination. Therefore, if we fix $d>0$ and set $\la=s/d$, $\rho=r/d$, rescale time according to $\ta=td$, and return to $t$ instead of $\ta$, we can rewrite \eqref{dynamics_dsr_pi} as
\begin{subequations}\label{dynamics_pi}
\begin{alignat}{2}
	&\partial_t p_i = \De p_i + \la S_i(x,p) - \et_i \rh D   &\quad&\text{ for } (x,t)\in\Om\times(0,\infty) \,, \label{dynamics_pi_a} \\
	&\partial_\nu p_i = 0 	&\quad&\text{ for } (x,t)\in\partial\Om\times(0,\infty)\,,  \label{dynamics_pi_b} \\
	&p_i(x,0)\ge0 \, \text{ and }  \sum_{i=1}^4 p_i(x,0)=1 &\quad&\text{ for $x\in\bar\Om$}\,. \label{dynamics_pi_c}
\end{alignat}
\end{subequations}

\subsection{Basic properties of the dynamics}\label{sec:basic_dynamics}
If the initial data $p_i(x,0)$ are continuous on $\bar\Om$, then \eqref{dynamics_pi} has a unique classical solution $p(x,t)$ for every $\rh\ge0$ that exists for all $t\ge0$. It satisfies
\begin{equation}\label{constraint1}
	p_i(x,t)\ge0 \text{ and }  \sum_{i=1}^4 p_i(x,t)=1 \; \text{ on } \bar\Om\times(0,\infty)\,.
\end{equation}
In addition, if for some $i\in I$,
\begin{equation}\label{flow_into_interior1}
	p_i(x,0) \not\equiv0 \text{ on } \bar\Om, \text{ then }
		p_i(x,t) > 0 \text{ on } \bar\Om\times(0,\infty)\,.
\end{equation}
The first assertion in \eqref{constraint1} and \eqref{flow_into_interior1} follow from the strong maximum principle for parabolic equations \cite{PW}. For the second assertion in \eqref{constraint1}, we observe from \eqref{w}, \eqref{et}, \eqref{S_i2}, and \eqref{dynamics_pi_a} that
\begin{equation}\label{PDE_sum_p_i}
	\partial_t \left(\sum_{i=1}^4 p_i\right) = \De \left(\sum_{i=1}^4 p_i\right)  + \la \wb\left(1-\sum_{i=1}^4 p_i\right)\,.
\end{equation}
Therefore, uniqueness of solutions of \eqref{PDE_sum_p_i} yields $\sum_{i=1}^4 p_i(x,t)=1$ (see \cite{LNN2013}).

We define
\begin{equation}\label{X}
	\mathbf{X}=\biggl\{(u_1,u_2,u_3,u_4) \in C(\bar\Omega; [0,1]^4):\; \sum_{i=1}^4 u_i \equiv 1\biggr\}\footnote{We write $C(\bar\Omega;S)$ for the space of $S$-valued uniformly continuous functions on $\bar\Omega$ equipped with the supremum norm, and $C(\bar\Omega)=C(\bar\Omega;\Reals)$.}
\end{equation}
and
\begin{equation}\label{eq:X0}
	\mathbf{X}_0=\{ (u_1,u_2,u_3,u_4) \in \mathbf{X}:\; u_1+u_2 \equiv 0  \text{ or } u_3+u_4 \equiv 0  \text{ or } u_1 + u_3 \equiv 0  \text{ or } u_2+u_4 \equiv 0\}\,,	
\end{equation}
where $\mathbf{X}_0$ is the subset of $\mathbf{X}$ that corresponds to fixation (across the whole population) of at least one of the alleles at one of the loci.
We define $\Psi$ to be the semiflow generated by \eqref{dynamics_pi} in $\mathbf{X}$, i.e., for initial data $U_0\in \mathbf{X}$ and every $t > 0$ we set $\Psi_t(U_0)=p(\cdot,t)$, where $p(\cdot,t)$ is the solution of \eqref{dynamics_pi} corresponding to $p(\cdot,0)=U_0(\cdot)$. The above considerations show that $\mathbf{X}$ is positively invariant under the flow $\Psi$. It is easily seen that each of the four `edges' in $\mathbf{X}_0$ is invariant. In addition, we have the following property.

\begin{lemma}\label{flow_into_interior}
If $\rh>0$, then $\Psi$ maps $\mathbf{X}\setminus \mathbf{X}_0$ into the interior of $\mathbf{X}$.
\end{lemma}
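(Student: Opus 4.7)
The plan is to combine the positivity property \eqref{flow_into_interior1}, which handles gametes that are not identically zero initially, with a direct application of the strong maximum principle that exploits the nonnegative sign of the recombination source for gametes that are absent on all of $\bar\Omega$.

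Fix $U_0\in \mathbf{X}\setminus \mathbf{X}_0$ and let $J=\{i\in I:(U_0)_i\not\equiv 0\text{ on }\bar\Omega\}$. By the definition of $\mathbf{X}_0$ in \eqref{eq:X0}, the assumption $U_0\notin \mathbf{X}_0$ means that $J$ has nonempty intersection with each of the pairs $\{1,2\}$, $\{3,4\}$, $\{1,3\}$, $\{2,4\}$; a short case check shows the only admissible $J$ are $I$ itself, any $3$-element subset of $I$, $J=\{1,4\}$, or $J=\{2,3\}$. For every $i\in J$, \eqref{flow_into_interior1} already yields $p_i(x,t)>0$ on $\bar\Omega\times(0,\infty)$, so the remaining task is to treat $i\notin J$.

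For such $i$, I would rewrite \eqref{dynamics_pi_a} as a linear parabolic equation for $p_i$,
\[
\partial_t p_i-\Delta p_i-c_i(x,t)\,p_i=\rho\, g_i(x,t),
\]
by expanding $-\eta_i\rho D=-\eta_i\rho(p_1 p_4-p_2 p_3)$ and isolating the piece linear in $p_i$. The zeroth-order coefficient $c_i(x,t)=\lambda\bigl(w_i-\wb\bigr)(x,p(x,t))-\rho\,p_{5-i}(x,t)$ is bounded on $\bar\Omega\times(0,\infty)$, and the source is nonnegative with $g_1=g_4=p_2 p_3$ and $g_2=g_3=p_1 p_4$. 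The critical observation is that for each admissible $J$ and each $i\notin J$, the two gamete indices appearing in $g_i$ both lie in $J$: when $J$ omits only the index $i$ the pair in $g_i$ is automatically the complementary pair which is contained in $J$; when $J=\{1,4\}$ we have $i\in\{2,3\}$ and $g_i=p_1 p_4$; when $J=\{2,3\}$ we have $i\in\{1,4\}$ and $g_i=p_2 p_3$. Combined with the already-established positivity of $p_j$ for $j\in J$, this yields $\rho\,g_i(x,t)>0$ on $\bar\Omega\times(0,\infty)$.

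The last step is to invoke the strong maximum principle (together with Hopf's boundary-point lemma to accommodate the Neumann condition \eqref{dynamics_pi_b}) for the displayed equation: with $p_i(\cdot,0)\equiv 0\ge 0$, a bounded zeroth-order coefficient $c_i$, and a source that is nonnegative on $\bar\Omega\times[0,\infty)$ and strictly positive for $t>0$, we conclude that $p_i(x,t)>0$ on $\bar\Omega\times(0,\infty)$. Combined with the conclusion for $i\in J$, this shows that $\Psi_t(U_0)$ has all four components strictly positive on $\bar\Omega$ for every $t>0$, i.e., $\Psi_t(U_0)\in\operatorname{int}\mathbf{X}$. The only mildly delicate point I anticipate is the careful invocation of the strong maximum principle for a Neumann problem whose bounded zeroth-order coefficient need not be signed and whose source vanishes at $t=0$; everything else is straightforward bookkeeping on which initial gamete configurations are compatible with $U_0\notin \mathbf{X}_0$.
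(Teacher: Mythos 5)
Your proposal is correct and follows essentially the same route as the paper: the paper reduces by symmetry to the case $p_1(\cdot,0)\equiv 0$, notes that $U_0\notin\mathbf{X}_0$ forces $p_2,p_3>0$ for $t>0$, and then runs exactly your touching-point/Hopf-lemma argument on $\partial_t p_1-\Delta p_1=\rho p_2p_3>0$ (using that $S_1$ and the $p_1$-part of $-\eta_1\rho D$ vanish where $p_1=0$, which is your linear rewriting with source $\rho g_1$). Your explicit enumeration of the admissible index sets $J$ is just a more systematic bookkeeping of what the paper dispatches "by symmetry," and the delicate point you flag is harmless since the zeroth-order term is multiplied by $p_i=0$ at any touching point.
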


\begin{proof}
It is sufficient to consider the flow on the boundary of $\mathbf{X}$. By \eqref{flow_into_interior1}, it is sufficient to assume $p_i(x,0) \equiv 0$ for some $i$. By symmetry, we need to consider only the case $p_1(x,0) \equiv 0$.
Because $p(\cdot,0)\notin\mathbf{X}_0$, $p_1(x,0) \equiv 0$ implies the existence of $x_2, x_3\in\Omega$ such that $p_2(x_2,0)>0$ and $p_3(x_3,0)>0$. Then, again by the maximum principle for parabolic equations (and because of Neumann boundary conditions), $p_2(x,t)>0$ and $p_3(x,t)>0$ on $\bar\Omega\times(0,\infty)$. Now, we argue by contradiction to show that $p_1(x,t)>0$ on $\bar\Omega\times(0,\infty)$. Suppose that $p_1(x_1,t_1)=0$ for some $x_1\in\bar\Om$ and $t_1>0$. Then $S_1(x_1,p(x_1,t_1))=0$. If $x_1\in\Om$, then $\partial_t p_1(x_1,t_1)\le0$ and $\De p_1(x_1,t_1)\ge0$, which contradicts
\begin{equation}
	\partial_t p_1(x_1,t_1) - \Delta p_1(x_1,t_1) = \rh\, p_2(x_1,t_1)p_3(x_1,t_1) >0\,.
\end{equation}
This leaves us with the case $x_1\in\partial\Om$ and $p_1(x,t_1)>0$ for all $(x,t)\in\Omega\times(0,\infty)$, for which the Hopf lemma shows that $\partial_\nu p_1(x_1,t_1)<0$. This contradicts \eqref{dynamics_pi_b}. Therefore, $p_1(x,t)$ is positive on $\bar\Om$ whenever $t>0$.
\end{proof}

\subsection{Properties of recombination and linkage disequilibrium}\label{sec:reco_LD}
The measure $D$ of linkage disequilibrium can be interpreted as the covariance of the random variables indicating presence or absence of allele $A$ ($B$) at locus $\A$ ($\B$). Indeed, from \eqref{constraint1} we deduce
\begin{equation}\label{D_cov}
	D = p_1p_4 - p_2 p_3 = p_1(p_1+p_2+p_3+p_4) - (p_1+p_2)(p_1+p_3) = p_{AB} - p_A p_B\,,
\end{equation}
where $p_{AB}=p_1$, and
\begin{equation}\label{pApB}
	p_A=p_1+p_2 \text{ and } p_B=p_1+p_3
\end{equation}
denote the frequencies of alleles $A$ and $B$, respectively. In particular, recombination erodes linkage disequilibrium because, in the absence of diffusion and selection, $\partial_t D = \et_i \partial_t p_i = -\rh D$ for every $i\in I$, as we easily derive from \eqref{D_cov} and \eqref{dynamics_pi_a}. Recombination also generates missing gametes. For instance, if $p_1(x,0)=0$, but $p_2(x,0)>0$ and $p_3(x,0)>0$, then recombination will generate gamete $AB$ immediately, i.e., $p_1(x,t)>0$ for $t>0$ (see also Lemma \ref{flow_into_interior}). Consult \cite{Geiringer1944} and \cite{LewontinKojima1960} for important early treatments of linkage disequilibrium, and to \cite{Slatkin2008} for its applications in modern genetics.

If recombination is absent, i.e., $\rh=0$, then alleles on the same gamete are never separated and therefore each gamete $i\in I$ may be regarded as an allele at a single locus. Thus, the system \eqref{dynamics_pi} reduces to a one-locus system with four alleles. This case is treated in Section~\ref{sec:no_rec}.

If recombination is strong relative to selection and diffusion, then rapid decay of linkage disequilibrium $D$ to values close to zero will occur. In the limiting case of $D\equiv0$, i.e., vanishing covariance, the loci become independent. In Section \ref{sec:strong_reco}, we treat the case $\rh\gg1$ as a perturbation of that of two independent loci.

\subsection{Assumptions on selection}\label{sec:assumptions_selection}
Concerning selection, which arises as a consequence of a spatially heterogeneous environment, we assume that both loci are subject to so called additive selection, i.e., we ignore dominance and epistasis. Therefore, we can assign the Malthusian parameters $\tfrac12\a(x)$ and $-\tfrac12\a(x)$ to the alleles $A$ and $a$, and $\tfrac12\be(x)$ and $-\tfrac12\be(x)$ to $B$ and $b$, where $\a(x)$ and $\be(x)$ are real-valued functions on $\bar\Om$. They reflect the influence of environmental heterogeneity on the fitnesses of the alleles. Then the fitness coefficients of the gametes $AB$, $Ab$, $aB$, $ab$ are
\begin{alignat}{2}\label{s1234}
	s_1(x) &=\tfrac12 [\a(x)+\be(x)] \,, \quad& s_2(x) &=\tfrac12 [\a(x)-\be(x)] \,, \notag \\
	s_3(x) &=\tfrac12 [-\a(x)+\be(x)] \,, \quad& s_4(x) &=-\tfrac12 [\a(x)+\be(x)] \,,
\end{alignat}
respectively, and the genotypic fitnesses are $w_{ij}(x)=s_i(x)+s_j(x)$. Using $\sum_i p_i(x,t)=1$, straightforward calculations yield
\begin{subequations}\label{S_i}
\begin{align}
	S_1(x,p) &= p_1[\a(x)(p_3+p_4)+\be(x)(p_2+p_4)]\,, \\
	S_2(x,p) &= p_2[\a(x)(p_3+p_4)-\be(x)(p_1+p_3)]\,,  \\
	S_3(x,p) &= p_3[-\a(x)(p_1+p_2)+\be(x)(p_2+p_4)]\,, \\
	S_4(x,p) &= p_4[-\a(x)(p_1+p_2)-\be(x)(p_1+p_3)]\,.
\end{align}
\end{subequations}

Throughout this paper, we will study \eqref{dynamics_pi}, or the equivalent  \eqref{dynamics_dsr_pi}, by assuming \eqref{S_i}. In addition, the following assumption will play an important role:

\noindent {\bf (A)} The functions $\a(x)$ and $\beta(x)$ change sign in $\Omega$ and are of class $C^\ga(\bar\Omega)$ for some $\ga\in(0,1)$.

\section{Preliminaries}\label{sec:prelim}
\subsection{Eigenvalue problems with indefinite weight}\label{sec:ev-problems}
The linearized problem of \eqref{dynamics_pi} at an equilibrium $\hat{p}=(\hat{p}_1, \hat{p}_2, \hat{p}_3, \hat{p}_4)^T$,
$\hat{p}_i=\hat{p}_i(x)$, reads
\begin{subequations}\label{1.4}
\begin{alignat}{2}
	&\De \Phi+J|_{\hat{p}} \Phi +\mu \Phi=0 &\quad&\text{in } \Omega\,, \label{1.4a}\\
	&\partial_\nu\Phi=0		&\quad&\text{on } \partial\Omega\,, \label{1.4b}
\end{alignat}
\end{subequations}
where $\Phi=(\phi_1,\phi_2,\phi_3,\phi_4)^T$, $\phi_i=\phi_i(x)$,
$\sum_{i=1}^{4}\phi_i=0$, and
\begin{align}
	J= & \la
	\begin{pmatrix}
		0 & \beta p_1 & \alpha p_1 & (\alpha+\beta) p_1\\
		-\beta p_2 & 0 & (\alpha-\beta) p_2 & \alpha p_2\\
		-\alpha p_3 & (\beta-\alpha)p_3 & 0 & \beta p_3\\
		-(\alpha+\beta) p_4 & -\alpha p_4 & -\beta p_4 & 0
	\end{pmatrix} \notag \\
	&+ \rho
	\begin{pmatrix}-p_4 & p_3 & p_2 & -p_1 \\
		p_4 & -p_3 & -p_2 & p_1 \\ p_4 & -p_3 & -p_2 & p_1 \\ -p_4 & p_3 & p_2 & -p_1
	\end{pmatrix} \nonumber\\
	& + \la~\mbox{diag} \{\alpha (p_3+p_4)+\beta (p_2+p_4),~\alpha
	(p_3+p_4)-\beta (p_1+p_3), \nonumber\\
	& \qquad -\alpha (p_1+p_2)+\beta (p_2+p_4),~-\alpha (p_1+p_2)-\beta
	(p_1+p_3) \}\,.  \label{Jacob}
\end{align}
Sometimes it is more convenient to study \eqref{1.4} with three linearly
independent equations using the relation $\sum_{i=1}^{4}\phi_i=0$.

For any function $u(x)\in C(\bar{\Omega})$, we define its spatial average
\begin{equation}\label{1.9}
	\bar{u}=\frac{1}{|\Omega|}\int_{\Omega} u(x)~dx\,.
\end{equation}
The following eigenvalue problem will be helpful:
\begin{subequations}\label{1.6}
\begin{alignat}{2}
	&\De \varphi+\tilde{\la} h(x)\varphi=0   &\quad&\text{in } \Om\,, \label{1.6a}\\
	&\varphi>0 		&\quad&\text{in } \Om\,, \label{1.6b}\\
	&\partial_\nu\varphi = 0  &\quad&\text{on } \partial\Om\,, \label{1.6c}
\end{alignat}
\end{subequations}
where $\Omega$ and $\nu$ are as in \eqref{dynamics_pi} and $h(x)\in C(\bar{\Omega})$.
Brown and Lin \cite{BL80} showed that \eqref{1.6} has a positive eigenvalue $\tilde{\la}$ if and only if $h(x)$
changes sign and $\bar{h}<0$. In addition, the positive eigenvalue (if it exists)
is unique, and we denote it by $\la^*(h)$.

For each fixed $\tilde{\la}>0$, we consider the eigenvalue problem
\begin{subequations}\label{1.7}
\begin{alignat}{2}
	&\De \psi+\tilde{\la} h(x)\psi+\mu\psi=0 	&\quad&\text{in } \Om \,, \label{1.7a}\\
	&\partial_\nu\psi = 0 		&\quad&\text{on } \partial\Omega\,, \label{1.7b}
\end{alignat}
\end{subequations}
where $\Omega$ and $\nu$ are as in \eqref{dynamics_pi} and $h(x)\in C(\bar{\Omega})$.

The following results are well known (\cite{S83}, \cite{LNN2013}).
\begin{lemma}\label{lm1.2} Suppose that $h(x)\in C(\bar{\Omega})$ is a nonconstant function and positive somewhere. Then the smallest eigenvalue $\mu_1(\tilde{\la})$ of \eqref{1.7}  is strictly concave down in $\tilde{\la}$,
\begin{equation}\label{mu1}
\lim_{\tilde{\la}\to \infty}\mu_1(\tilde{\la})=-\infty\,,
\end{equation}
and has the following properties.

\noindent{\rm (a)} If $\bar{h}\ge 0$,
then $\mu_1(\tilde{\la})<0$ and $\mu_1(\tilde{\la})$ is strictly decreasing for $\tilde{\la}>0$.

\noindent{\rm (b)} Assume that $\bar{h}<0$. Then
\begin{equation}\label{1.8}
	\mu_1(\tilde{\la})\begin{cases} <0 &\mbox{if} \;\;
	\tilde{\la}>\la^*(h)\,,\\
	 =0 &\mbox{if}  \;\; \tilde{\la}=\la^*(h)\,,\\
	 >0 &\mbox{if}  \;\; 0<\tilde{\la}<\la^*(h)\,, \end{cases}
\end{equation}
and $\mu_1(\tilde{\la})$ is strictly decreasing for
$\tilde{\la}>\la^*(h)$.
\end{lemma}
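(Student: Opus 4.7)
The plan is to base everything on the Rayleigh quotient characterization
\begin{equation*}
\mu_1(\tilde{\la}) = \inf_{\ps\in H^1(\Om),\,\ps\not\equiv 0}\frac{\int_\Om|\nabla\ps|^2\,dx-\tilde{\la}\int_\Om h\,\ps^2\,dx}{\int_\Om \ps^2\,dx},
\end{equation*}
together with standard analytic perturbation theory for simple eigenvalues, which gives $\mu_1$ smooth in $\tilde\la$ with derivative $\mu_1'(\tilde\la) = -\int_\Om h\,\ps_{\tilde\la}^2\,dx\big/\int_\Om \ps_{\tilde\la}^2\,dx$, where $\ps_{\tilde\la}>0$ is the principal eigenfunction. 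Since the Rayleigh quotient is affine in $\tilde\la$ for each fixed $\ps$, $\mu_1$ is concave as an infimum of affine functions. To upgrade to strict concavity, the plan is to argue by contradiction: equality in the concavity inequality at some interior point $\tilde\la_\theta=\theta\tilde\la_1+(1-\theta)\tilde\la_2$ would force the principal eigenfunction $\ps_\theta$ at $\tilde\la_\theta$ to realize the Rayleigh minimum at both $\tilde\la_1$ and $\tilde\la_2$; subtracting the two resulting eigenvalue equations and using $\ps_\theta>0$ would then force $h$ to be constant, contradicting the hypothesis.

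For the limit \eqref{mu1}, I would test the Rayleigh quotient against a nontrivial $\ph\in H^1(\Om)$ supported in $\{h>\de\}$ for some $\de>0$ (nonempty because $h$ is positive somewhere), yielding an upper bound $\mu_1(\tilde\la)\le C_1-C_2\tilde\la$ with $C_2>0$. Noting that $\mu_1(0)=0$ with constant eigenfunction $\ps_0\equiv 1$, the derivative formula also gives $\mu_1'(0)=-\bar h$.

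From here part (a) is quick. If $\bar h\ge 0$, then $\mu_1'(0)\le 0$, and strict concavity (so $\mu_1'$ strictly decreasing) gives $\mu_1'(\tilde\la)<0$ on $(0,\infty)$, which produces simultaneously $\mu_1(\tilde\la)<\mu_1(0)=0$ and the claimed strict monotonicity. For part (b), $\bar h<0$ gives $\mu_1'(0)>0$, so $\mu_1$ initially increases from $0$; by concavity and the limit, there is a unique $\tilde\la_*>0$ at which $\mu_1$ returns to $0$, with $\mu_1>0$ on $(0,\tilde\la_*)$ and $\mu_1<0$ on $(\tilde\la_*,\infty)$, which is precisely \eqref{1.8}. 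To identify $\tilde\la_*=\la^*(h)$, I would observe that substituting $\mu=0$ and $\tilde\la=\la^*(h)$ into \eqref{1.7} reduces it to \eqref{1.6}, whose positive solution then serves as the principal eigenfunction of \eqref{1.7} at $\tilde\la=\la^*(h)$, forcing $\mu_1(\la^*(h))=0$. Finally, the unique maximizer $\tilde\la_0$ of $\mu_1$ satisfies $\mu_1(\tilde\la_0)>0=\mu_1(\la^*(h))$, so $\tilde\la_0<\la^*(h)$; strict concavity then yields $\mu_1'(\tilde\la)<\mu_1'(\tilde\la_0)=0$ for all $\tilde\la>\la^*(h)$, giving strict monotonicity on that range.

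The main obstacle is the strict concavity step: every monotonicity statement and the uniqueness of $\tilde\la_*$ reduce to it. Once strict concavity, the derivative formula $\mu_1'(0)=-\bar h$, and the Brown--Lin characterization of $\la^*(h)$ are in hand, the rest of the lemma is essentially bookkeeping of signs.
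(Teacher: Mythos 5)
The paper does not prove Lemma \ref{lm1.2}; it states the result as ``well known'' and cites Senn \cite{S83} and the survey \cite{LNN2013}, so there is no in-paper argument to compare against. Your proposal is a correct, self-contained proof and is essentially the standard variational argument from that literature: concavity of $\mu_1$ as an infimum of functions affine in $\tilde\la$, strictness via the observation that equality would make one principal eigenfunction solve the eigenvalue equation for two distinct values of $\tilde\la$ (forcing $h$ constant), the Hadamard formula $\mu_1'(0)=-\bar h$ at the Neumann ground state $\ps_0\equiv 1$, the test-function bound $\mu_1(\tilde\la)\le C_1-C_2\tilde\la$ on $\{h>\de\}$ for \eqref{mu1}, and the identification $\mu_1(\la^*(h))=0$ because the positive solution of \eqref{1.6} is a positive eigenfunction of \eqref{1.7} with $\mu=0$ and only the principal eigenvalue admits one. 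All the sign bookkeeping in (a) and (b) then follows as you describe; I see no gaps.
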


\begin{remark}\label{rm1.3} {\rm Because the eigenfunction corresponding to
$\mu_1(\tilde{\la})$ can be chosen to be positive on $\Omega$, integration of
\eqref{1.7a} over $\Omega$ shows that if $h(x)\le 0$ and $h(x)\not\equiv 0$,
then $\mu_1(\tilde{\la})>0$ for every $\tilde{\la}>0$.}
\end{remark}

For a nonconstant function $h(x)\in C(\bar{\Omega})$, it is convenient to define
\begin{equation}\label{lambda_0}
	\la_{0}(h) =
	\begin{cases} \la^*(h) &\mbox{if $h(x)$ changes sign and} \; \bar h <0\,,\\
		 0 					&\mbox{if}  \; \bar h \ge 0\,,\\
		 \infty &\mbox{if}  \;  h(x)\le 0\mbox{~in $\bar\Omega$}\,.
	\end{cases}
\end{equation}
Then Lemma \ref{lm1.2} and Remark \ref{rm1.3} yield

\begin{lemma}\label{rm1.4} Suppose that $h(x)$ is a nonconstant continuous function on $ \bar{\Omega}$.
If $\tilde{\la}>\la_{0}(h)$, then
$\mu_1(\tilde{\la})<0$ and $\mu_1(\tilde{\la})$ is strictly decreasing in $\tilde{\la}$.
If $0<\tilde{\la}<\la_{0}(h)$, then $\mu_1(\tilde{\la})>0$.
\end{lemma}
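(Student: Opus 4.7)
The plan is to prove Lemma \ref{rm1.4} by a direct case analysis on the three alternatives in the definition \eqref{lambda_0} of $\la_0(h)$, invoking Lemma \ref{lm1.2} or Remark \ref{rm1.3} in each case.

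First I would handle the case where $h$ changes sign and $\bar h < 0$, so that $\la_0(h) = \la^*(h)$. Since $h$ takes positive values somewhere, Lemma \ref{lm1.2}(b) applies and immediately gives $\mu_1(\tilde\la) < 0$ with strict monotonic decrease for $\tilde\la > \la^*(h)$, together with $\mu_1(\tilde\la) > 0$ for $0 < \tilde\la < \la^*(h)$. This is exactly the claim in this case.

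Next I would treat the case $\bar h \ge 0$, where $\la_0(h) = 0$. Here the alternative $0 < \tilde\la < \la_0(h)$ is vacuous, so there is nothing to prove for it. For $\tilde\la > 0$, I want to apply Lemma \ref{lm1.2}(a), which requires that $h$ be positive somewhere. The only subtlety in the proof is to verify this: if $h \le 0$ throughout $\bar\Omega$, integration would give $\bar h \le 0$, which combined with $\bar h \ge 0$ forces $\bar h = 0$ and then, by continuity and $h \le 0$, would force $h \equiv 0$, contradicting nonconstancy of $h$. Hence Lemma \ref{lm1.2}(a) applies and yields $\mu_1(\tilde\la) < 0$ and strictly decreasing for all $\tilde\la > 0$.

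Finally, for $h \le 0$ in $\bar\Omega$ (with $h \not\equiv 0$ by nonconstancy), $\la_0(h) = \infty$, and only the alternative $0 < \tilde\la < \la_0(h)$ is relevant. Remark \ref{rm1.3} then immediately gives $\mu_1(\tilde\la) > 0$ for all $\tilde\la > 0$, finishing the argument. Overall, this is essentially a bookkeeping exercise that translates Lemma \ref{lm1.2} and Remark \ref{rm1.3} into the unified notation $\la_0(h)$; the one step that is not a pure quotation is the elementary observation in the middle case that nonconstancy together with $\bar h \ge 0$ rules out $h \le 0$ everywhere.
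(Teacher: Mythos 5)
Your proof is correct and follows exactly the route the paper intends: the paper simply states that Lemma \ref{lm1.2} and Remark \ref{rm1.3} yield the result, and your case analysis on the three alternatives in \eqref{lambda_0} (including the small observation that nonconstancy and $\bar h \ge 0$ rule out $h \le 0$ everywhere, so Lemma \ref{lm1.2}(a) is applicable) is precisely the bookkeeping left to the reader.
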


\subsection{One-locus theory}\label{sec:SLClines}
The diallelic one-locus equation with isotropic, homogeneous migration, and selection without dominance reads
\begin{subequations}
\begin{align}\label{eq:theta}
		&\partial_t \theta  = \Delta \theta+\la h(x) \theta (1-\theta) 	&&\text{for } (x,t) \in \Omega\times(0,\infty)\,, \\
	&\partial_\nu \theta =0 	&&\text{for }(x,t) \in \partial\Omega\times(0,\infty)\,, \\
	&\theta(x,0) = \theta_0(x) 	&&\text{for }x \in \Omega \text{ and }\theta_0\in C^0(\bar\Omega;[0,1])\setminus\{0,1\}\,.
\end{align}
\end{subequations}
Recalling that $\la^\ast(h)$ designates the unique positive eigenvalue of \eqref{1.6}, for a sign-changing $h(x)$ we introduce
\begin{equation}\label{lambda_h}
	\la_{h}:=
	\begin{cases} \la^*(h) &\mbox{if } \; \bar h <0\,,\\
		 0 					&\mbox{if }  \; \bar h =0\,,\\
		 \la^*(-h) &\mbox{if }  \; \bar h >0\,.
	\end{cases}
\end{equation}

\begin{theorem}[{\cite[Lemma\,10.1.5]{Henry1981}}, {\cite[Theorem\,2.1]{LN2002}}]\label{thm:singlelocus}
Let $h(x)$ be a sign-changing function of class $C^\gamma(\bar\Omega)$ for some $0 < \gamma < 1$. Then for every $\la>0$, the problem \eqref{eq:theta} has a unique stable equilibrium solution $\theta_h$, and every solution $\theta(x,t)$ converges to $\theta_h(x)$ uniformly in $x$ as $t \to \infty$. More precisely:

\noindent {\rm (a)} Suppose that $\bar h <0$. If $0 < \la \leq \la_h$, then $\theta_h \equiv 0$ in $\bar\Omega$; if $\la > \la_h$, then $0 <\theta_h < 1$ in $\bar\Omega$.

\noindent {\rm (b)} Suppose that $\bar h >0$. If $0 < \la \leq \la_h$, then $\theta_h \equiv 1$ in $\bar\Omega$; if $\la > \la_h$, then $0 <\theta_h < 1$ in $\bar\Omega$.

\noindent {\rm (c)} Suppose that $\bar h = 0$. Then for every $\la >0$, $0 < \theta_h < 1$ in $\bar\Omega$.

\noindent In each case, $\theta_h$ is linearly stable whenever $\la\neq\la_h$. The proof of Theorem 2.1 in \cite{LN2002} shows that convergence occurs in $C^2(\bar\Om)$.
\end{theorem}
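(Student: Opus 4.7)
The plan is to combine three structural features of \eqref{eq:theta}: the comparison principle (which makes the semiflow order-preserving and keeps $[0,1]$ invariant), the indefinite-weight eigenvalue theory from Lemmas \ref{lm1.2}--\ref{rm1.4}, and the gradient/variational structure of the equation. First I would linearise at the constant equilibria $\th\equiv 0$ and $\th\equiv 1$: the former gives the operator $-\De-\la h(x)$ whose smallest eigenvalue is exactly the $\mu_1(\la)$ of Lemma \ref{lm1.2} with weight $h$, and the latter corresponds to weight $-h$. Applying Lemma \ref{lm1.2} to $h$ and to $-h$ then reads off the linear (in)stability of the two constants in each regime and pinpoints the threshold $\la_h$: in case (a), $\th\equiv 1$ is unstable for all $\la>0$ while $\th\equiv 0$ is stable for $\la<\la_h$, marginal at $\la=\la_h$, and unstable for $\la>\la_h$; case (b) is the mirror image under $\th\mapsto 1-\th$; in case (c) both constants are unstable for every $\la>0$.

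Next, in every parameter range where the candidate boundary equilibrium ($0$ or $1$) is unstable, I would produce a positive equilibrium by the sub/super-solution method. Taking $\ph>0$ to be the principal eigenfunction associated with the negative principal eigenvalue, one checks that $\ep\ph$ is a strict subsolution and $1-\ep\ph$ (or simply $1$) a strict supersolution for all sufficiently small $\ep>0$; monotone iteration then delivers an equilibrium $\th_h$ with $0<\th_h<1$.

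Uniqueness of this positive equilibrium is the step I expect to be the main technical obstacle, because the sign change of $h$ rules out any direct pointwise comparison. I would handle it by the standard sublinearity/concavity argument: if $\th_1,\th_2$ were two positive equilibria, then each $\th_i$ is a positive principal eigenfunction with eigenvalue zero of the Schr\"odinger operator $-\De-\la h(x)(1-\th_i)\,\cdot$, and the strict concavity of the nonlinearity $u\mapsto\la h(x)u(1-u)$ forces a Picone/Brezis--Oswald-type integral identity to be signed, yielding $\th_1\equiv\th_2$. In the subcritical regimes one simultaneously rules out positive equilibria: the existence of one would make $\la h(x)(1-\th)$ a weight giving principal eigenvalue zero, and by monotonicity of $\mu_1$ in the weight together with Lemma \ref{rm1.4} this forces $\la>\la_h$, a contradiction.

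Finally, global convergence follows from the gradient structure. The functional
\begin{equation*}
	V(\th)=\int_\Om\biggl[\tfrac12|\nabla\th|^2-\la\int_0^{\th}h(x)s(1-s)\,ds\biggr]dx
\end{equation*}
is a strict Lyapunov function satisfying $\tfrac{d}{dt}V(\th(\cdot,t))=-\int_\Om(\partial_t\th)^2\,dx$ after integration by parts under the Neumann condition. The compact attractor in $[0,1]$ and LaSalle's invariance principle then force every $\om$-limit set to consist of equilibria; combined with the uniqueness above, this yields uniform convergence $\th(\cdot,t)\to\th_h$, which parabolic regularity bootstraps to $C^2(\bar\Om)$. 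Linear stability of $\th_h$ whenever $\la\ne\la_h$ follows from a final application of Lemma \ref{rm1.4} to the linearised operator at $\th_h$, together with continuity of the principal eigenvalue in the weight.
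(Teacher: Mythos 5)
The paper does not prove this theorem itself: it is imported verbatim from Henry \cite[Lemma 10.1.5]{Henry1981} and Lou--Nagylaki \cite[Theorem 2.1]{LN2002}, so your proposal has to be measured against those proofs. Your overall skeleton is the standard one and matches theirs: reading off the (in)stability of $\theta\equiv0$ and $\theta\equiv1$ from Lemma \ref{lm1.2} applied to the weights $h$ and $-h$; building the cline by monotone iteration between the subsolution $\epsilon\varphi$ and a supersolution of the form $1-\epsilon'\psi$; and deducing convergence from the gradient structure plus parabolic regularity (modulo the routine extra step of excluding convergence to the unstable trivial equilibria for nontrivial initial data, which you do not spell out but which follows from trapping between the sub- and supersolutions).

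The genuine gap is precisely the step you flag as the obstacle. The Brezis--Oswald/Picone mechanism needs $f(x,u)/u$ nonincreasing in $u$; here $f(x,u)/u=\lambda h(x)(1-u)$ has $u$-derivative $-\lambda h(x)$, which is \emph{positive} wherever $h<0$, so the two-solution Picone identity acquires the wrong sign exactly on the set where $h$ is negative and the argument collapses -- the sign change does not merely preclude pointwise comparison, it defeats the concavity mechanism itself. The classical repair (Henry's) is the identity
\begin{equation*}
\Delta\bigl[\theta(1-\theta)\bigr]+\lambda h(x)(1-2\theta)\,\theta(1-\theta)=-2|\nabla\theta|^2,
\end{equation*}
valid at any equilibrium of \eqref{eq:theta}: it exhibits $\theta(1-\theta)>0$ as a positive strict supersolution of the linearization at any nonconstant interior equilibrium, so \emph{every} interior equilibrium is linearly stable. (This also supplies the final linear-stability assertion for $\theta_h$, which your appeal to Lemma \ref{rm1.4} cannot, since the weight $h(1-2\theta_h)$ is not of the form $\tilde\lambda h$ covered there.) Uniqueness then follows from monotone dynamical systems rather than from an integral identity: all interior equilibria are sandwiched between the minimal and maximal equilibria generated by your sub/supersolutions, and a strongly monotone gradient-like semiflow whose equilibria are all isolated and asymptotically stable can have only one of them (disjoint open basins would disconnect the order interval), or equivalently two ordered stable equilibria force an unstable one strictly between them. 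Your subcritical nonexistence argument has the same defect: $h(1-\theta)-h=-h\theta$ is not one-signed, so monotonicity of the principal eigenvalue in the weight gives no comparison between $\lambda^*(h(1-\theta))$ and $\lambda^*(h)$; the cited proofs obtain the subcritical case dynamically, by comparison from above with the monotone decreasing solution started at $\theta_0\equiv1$, which converges to the maximal equilibrium, shown to be $0$ (resp.\ $1$) using the eigenvalue threshold.
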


For convenience, we call the constant equilibria $\theta(x)\equiv 0$ and $\theta(x)\equiv 1$ in $\bar\Omega$ the {\it trivial equilibria}, and we call $\theta_h$ the {\it global attractor} of \eqref{eq:theta}. If $0<\theta_h<1$, then we call it a (one-locus) {\it cline}.

\section{Boundary equilibria}\label{Section_boundary}
\subsection{Existence}\label{sec:existence_boundary}
The four monomorphic equilibria $M_i$, defined by $p_i\equiv1$, exist always. We also call them the vertices or vertex equilibria.

In addition, \eqref{dynamics_pi} may have up to six equilibria on the edges connecting any pair of vertices.
We define
\begin{equation}\label{1.20}
	h_{ij}(x)=s_i(x)-s_j(x)\,.
\end{equation}
Let
$\hat{p}^{(ij)} = \hat{p}^{(ij)}(x)$, $i<j$, be the edge equilibrium with gametes $i$ and $j$ present, i.e.,
\begin{equation}\label{2.4}
	\hat{p}^{(ij)}_k = \begin{cases} \theta_{ij} &\mbox{if} \;\; k=i\,,\\
	 1-\theta_{ij} &\mbox{if}  \;\; k=j\,,\\
	 0 &\mbox{if}  \;\; k\neq i,j\,, \end{cases}
\end{equation}
where $\theta_{ij}=\theta_{ij}(x)$ satisfies
\begin{subequations}\label{2.5}
\begin{alignat}{2}
	&\De \theta_{ij}+\la h_{ij}(x)\theta_{ij}(1-\theta_{ij})=0 &\quad&\text{in } \Om\,, \label{2.5a}\\
	&0<\theta_{ij}<1 		&\quad&\text{in } \Om\,, \label{2.5b}\\
	&\partial_\nu\theta_{ij} = 0     &\quad&\text{on } \partial\Om\,. \label{2.5c}
\end{alignat}
\end{subequations}
Theorem~\ref{thm:singlelocus} and the above-cited result of Brown and Lin \cite{BL80} for \eqref{1.6} inform us that \eqref{2.5} has a solution if and only if
\begin{subequations}\label{2.6}
\begin{equation}\label{2.6a}
	h_{ij}(x)~\mbox{changes sign in $\Omega$}
\end{equation}
and
\begin{equation}\label{2.6b}
	\la >\la_{ij}:=\la_{h_{ij}}\,,
\end{equation}
\end{subequations}
where $\la_{h_{ij}}$ is given by \eqref{lambda_h} with $h=h_{ij}$.
Moreover, if a solution of \eqref{2.5} exists, it is unique and linearly stable.

If $\rh=0$, then all six edge equilibria may exist. If $\rh>0$, then only $\hat p^{(12)}$, $\hat p^{(13)}$, $\hat p^{(34)}$, and $\hat p^{(24)}$ can exist (Lemma \ref{flow_into_interior}). These four edge equilibria are independent of $\rh$ because $D\equiv0$ at each of them; see also Section \ref{sec:single-locus_poly}. The biological reason for the non-existence of $\hat p^{(14)}$ and $\hat p^{(23)}$ if $\rh>0$ is that recombination generates the two other gametes immediately (cf.\ Section \ref{sec:reco_LD}).

\subsection{Stability of the monomorphic equilibria}
Here we show that generically at most one monomorphic equilibrium can be linearly stable. In Theorem~\ref{thm:monos_stability}, we determine the range of parameters for which it is stable. For sufficiently strong migration (relative to selection and recombination), we establish global asymptotic stability in Theorem~\ref{thm:M1_globallystable}.

We write
\begin{equation}\label{2.17}
	I_i=I\setminus \{i\}\,,
\end{equation}
and define, for each fixed $j\in I$,
\begin{equation}\label{3.2}
	\tilde{j}=5-j\,,\quad \tilde{I}_j=I\setminus\{j,\tilde{j}\}\,,
\end{equation}
i.e., $\tilde{I}_1=\tilde{I}_4=\{2,3\}$ and $\tilde{I}_2=\tilde{I}_3=\{1,4\}$.

From \eqref{s1234} we observe that
\begin{equation}\label{s14_s23}
	s_1(x)= -s_4(x) \quad\text{and}\quad s_2(x)= -s_3(x)
\end{equation}
for every $x\in \Omega$. Therefore, there are only two possibilities:
\begin{subequations}\label{3.8}
\begin{gather}
	\mbox{there exists $i\in I$ such that}\;
		\bar{s}_{\tilde{i}}<\bar{s}_{k}<\bar{s}_{i}\; \mbox{for each}\;
		k\in\tilde{I}_i\,;  \label{3.8a}\\
	\mbox{there exist $i\in I$ and $j\in \tilde{I_i}$ such that}\;
		\bar{s}_{i}=\bar{s}_{j}=-\bar{s}_{\tilde{i}}=-\bar{s}_{\tilde{j}}\,. \label{3.8b}
\end{gather}
\end{subequations}
We note that \eqref{3.8a} is the generic case, which is equivalent to
\begin{equation}\label{bar_si_max}
	\mbox{there exists an $i\in I$ such that $\bar{s}_i>\max_{j\in I_i}\{\bar{s}_j\}$\,.}
\end{equation}

To study the stability of the vertex equilibrium $M_j$, we have to investigate the eigenvalue problem
\begin{subequations}\label{3.3}
\begin{alignat}{2}
		&\De \phi_i+\la h_{ij}(x)\phi_i +\rho \phi_{\tilde{j}}+\mu \phi_i=0
				&\quad&\text{in } \Omega\,, \label{3.3a}\\
		&\De \phi_{\tilde{j}}+\la h_{\tilde{j}j}(x)\phi_{\tilde{j}} -\rho \phi_{\tilde{j}}+\mu \phi_{\tilde{j}}=0
				&\quad&\text{in } \Omega\,,\label{3.3b} \\
		&\partial_{\nu} \phi_i=\partial_{\nu}\phi_{\tilde{j}}=0
				&\quad&\text{on }\partial\Omega\,,\label{3.3c}
\end{alignat}
\end{subequations}
where $i\in \tilde{I}_j$ (cf.\ \cite[(2.23)]{LN2006} and \eqref{Jacob}). For each $k\in I$, we let $E_{k}$ be the set of all
eigenvalues of the single-equation eigenvalue problem
\begin{subequations}\label{3.4}
\begin{alignat}{2}
	&\De \phi^{(k)}+\la h_{kj}(x)\phi^{(k)} +\mu^{(k)} \phi^{(k)}=0 	&\quad&\text{in } \Omega\,, \label{3.4a} \\
	&\partial_{\nu} \phi^{(k)}=0 	&\quad&\text{on } \partial\Omega\,. \label{3.4b}
\end{alignat}
\end{subequations}

Before formulating and proving our main results, we establish two lemmas.

\begin{lemma}\label{lm3.1}
For every $\rho\ge 0$ and every $j\in I$ fixed, the set of eigenvalues of system \eqref{3.3} consists of
$\bigcup_{i\in \tilde{I}_j} E_i\,  \bigcup \,\{\mu^{(\tilde{j})}+\rho: \mu^{(\tilde{j})}\in E_{\tilde{j}}\}$.
\end{lemma}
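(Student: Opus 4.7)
The plan is to exploit the upper-triangular structure of \eqref{3.3}: equation \eqref{3.3b} for $\phi_{\tilde j}$ contains no $\phi_i$ and, after moving the $-\rho\phi_{\tilde j}$ to the spectral side, reads exactly as the single-equation problem \eqref{3.4} with $k=\tilde j$ and spectral parameter $\mu-\rho$, while each equation \eqref{3.3a} for $\phi_i$ ($i\in\tilde I_j$) couples only to $\phi_{\tilde j}$ through the inhomogeneity $\rho\phi_{\tilde j}$ and is decoupled from the other $\phi_{i'}$ with $i'\in\tilde I_j\setminus\{i\}$. Viewed abstractly, \eqref{3.3} is the eigenvalue problem for a block upper-triangular operator whose diagonal blocks are the self-adjoint Neumann operators $-\Delta-\la h_{ij}$ ($i\in\tilde I_j$) and $-\Delta-\la h_{\tilde j j}+\rho$; the lemma is the spectral counterpart of this observation. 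A case analysis on whether $\phi_{\tilde j}\equiv 0$ delivers both inclusions.

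For the easy inclusion, if $\mu\in E_i$ for some $i\in\tilde I_j$, I would set $\phi_i$ to a corresponding eigenfunction of \eqref{3.4} and the other two components to zero; the triple is a nontrivial solution of \eqref{3.3}. If $\mu=\mu^{(\tilde j)}+\rho$ with $\mu^{(\tilde j)}\in E_{\tilde j}$, I take $\phi_{\tilde j}$ to be an eigenfunction of \eqref{3.4} at $k=\tilde j$, which automatically satisfies \eqref{3.3b}. Provided $\mu\notin E_i$ for every $i\in\tilde I_j$, the Fredholm alternative yields a unique $\phi_i$ solving the inhomogeneous \eqref{3.3a} with right-hand side $-\rho\phi_{\tilde j}$; in the remaining case $\mu\in E_i$, the target $\mu$ already belongs to the claimed union through $E_i$ and the previous construction produces an eigenfunction directly.

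For the converse, let $\mu$ be an eigenvalue of \eqref{3.3} with nontrivial eigenfunction $(\phi_i,\phi_{\tilde j})_{i\in\tilde I_j}$. If $\phi_{\tilde j}\equiv 0$, then \eqref{3.3a} collapses to $\Delta\phi_i+\la h_{ij}\phi_i+\mu\phi_i=0$ with Neumann data for each $i\in\tilde I_j$; since the eigenfunction is nontrivial, some $\phi_i\not\equiv 0$ and thus $\mu\in E_i$. If $\phi_{\tilde j}\not\equiv 0$, rewriting \eqref{3.3b} as $\Delta\phi_{\tilde j}+\la h_{\tilde j j}\phi_{\tilde j}+(\mu-\rho)\phi_{\tilde j}=0$ exhibits $\mu-\rho$ as an eigenvalue in $E_{\tilde j}$, so $\mu=\mu^{(\tilde j)}+\rho$. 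I do not anticipate a serious obstacle; the only subtle point is the Fredholm compatibility flagged in the existence step, which causes no trouble because it arises precisely when $\mu$ already lies in the union through the $E_i$ branch.
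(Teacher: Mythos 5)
Your proof is correct and follows essentially the same route as the paper's: both exploit the triangular coupling, construct eigenfunctions for $\mu\in E_i$ by setting the other components to zero, invert $\Delta+\la h_{ij}+\mu^{(\tilde j)}+\rho$ (your Fredholm-alternative step) when $\mu^{(\tilde j)}+\rho\notin E_i$, and handle the converse by the same dichotomy on whether $\phi_{\tilde j}\equiv 0$. No gaps; the resonant case $\mu^{(\tilde j)}+\rho\in E_i$ is disposed of exactly as in the paper.
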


\begin{proof} First, we observe that for every $i\in \tilde{I}_j$, every
$\mu^{(i)}\in E_i$ with an eigenfunction $\phi^{(i)}$ is also an eigenvalue of
\eqref{3.3}
and the corresponding eigenfunction has components $\phi_i=\phi^{(i)}$ and
$\phi_k\equiv 0$ for $k\neq i$.
Second, for every
$\mu^{(\tilde{j})}\in E_{\tilde{j}}$ with an eigenfunction $\phi^{(\tilde{j})}$,
there are two cases. If $\mu^{(\tilde{j})}+\rho\in E_i$ for some $i\in
\tilde{I}_j$,
then we already know it is an eigenvalue of \eqref{3.3} from the above
discussion. If $\mu^{(\tilde{j})}+\rho\notin E_i$ for every $i\in \tilde{I}_j$,
then the operator
\begin{equation}\label{3.5}
	L_i:=\left\{\De +\la h_{ij}(x)+\mu^{(\tilde{j})}+\rho\right\}
\end{equation}
is invertible for every $i\in \tilde{I}_j$, whence $\mu^{(\tilde{j})}+\rho$ is an
eigenvalue of \eqref{3.3} whose eigenfunction has components
\begin{subequations}\label{3.6}
\begin{alignat}{2}
	&\phi_i = L_i^{-1}[-\rho \phi^{(\tilde{j})}] &\quad&\text{for } i\in \tilde{I}_j\,, \label{3.6a}\\
	&\phi_{\tilde{j}}=\phi^{(\tilde{j})}\,.&&  \label{3.6b}
\end{alignat}
\end{subequations}

Next, we show that if $\mu$ is an eigenvalue of \eqref{3.3}, then either $\mu\in
E_{i}$ for some $i\in \tilde{I}_j$
or $\mu=\mu^{(\tilde{j})}+\rho$ for some $\mu^{(\tilde{j})}\in E_{\tilde{j}}$.
We denote the components of the eigenfunction of $\mu$ by $\phi_{i}$ for $i\in I_j$. There are two possibilities.
If $\phi_{\tilde{j}}\equiv 0$, then there exists at least one
$\phi_{i}\not\equiv 0$, $i\in \tilde{I}_j$, whence in view of \eqref{3.3a} we
conclude that
$\mu\in E_i$ and the corresponding eigenfunction can be taken as
$\phi^{(i)}=\phi_{i}$.
If $\phi_{\tilde{j}}\not\equiv 0$, then from \eqref{3.3b} we see that
$\mu=\mu^{(\tilde{j})}+\rho$ for some $\mu^{(\tilde{j})}\in E_{\tilde{j}}$ and
the corresponding
eigenfunction can be chosen as $\phi^{(\tilde{j})}=\phi_{\tilde{j}}$. This
completes the proof of Lemma \ref{lm3.1}.
\end{proof}

For a fixed $j\in I$, let $\mu_1^{(\tilde{j})}(\la)$ be the smallest eigenvalue of \eqref{3.4} with $k=\tilde{j}$. From
Lemma \ref{rm1.4} and \eqref{lambda_0}, we see that if $0\le \la_0(h_{\tilde{j}j})<\infty$, then
for $\la>\la_0(h_{\tilde{j}j})$ we have
$\mu_1^{(\tilde{j})}(\la)<0$ and $\mu_1^{(\tilde{j})}(\la)$ is strictly
decreasing. Thus, for each $\rho>0$, there exists a unique $\la$, denoted by
$\la_0(h_{\tilde{j}j},\rho)$, such that $\la>\la_0(h_{\tilde{j}j})$ and $\mu_1^{(\tilde{j})}(\la)+\rho=0$.
If $\la_0(h_{\tilde{j}j})=\infty$, we define $\la_0(h_{\tilde{j}j},\rho)=\infty$. If $\rho=0$, we set $\la_0(h_{\tilde{j}j},0)=\la_0(h_{\tilde{j}j})$. Then, for $\rho\ge 0$, we have
$\mu_1^{(\tilde{j})}(\la)+\rho<0$ if $\la>\la_0(h_{\tilde{j}j},\rho)$ and
$\mu_1^{(\tilde{j})}(\la)+\rho>0$ if $0<\la<\la_0(h_{\tilde{j}j},\rho)$.

Now, for every $j\in I$ and $\rho\ge 0$, we define
\begin{subequations}\label{3.7}
\begin{align}
	\la_j^*(\rho) &=\min_{i\in \tilde{I}_j}\{\la_0(h_{ij}),\la_0(h_{\tilde{j}j},\rho)\}\,, \label{3.7a}\\
	\mu_j^* &=\min_{i\in \tilde{I}_j}\{\mu_1^{(i)}(\la),\mu_1^{(\tilde{j})}(\la)+\rho\}\,. \label{3.7b}
\end{align}
\end{subequations}
The above discussion and Lemma \ref{rm1.4} inform us that $\mu_j^*>0$ if
$0<\la<\la_j^*(\rho)$ and $\mu_j^*<0$ if $\la>\la_j^*(\rho)$.
Since Lemma \ref{lm3.1} reveals that
$M_j$ is stable if $\mu_j^*>0$ and unstable if $\mu_j^*<0$, we have proved the following.

\begin{lemma}\label{lm3.2}
Let $\rh\ge 0$.

\noindent {\rm (a)} If $\la_j^*(\rho)=0$, then $M_j$ is linearly
unstable for every $\la>0$.

\noindent {\rm (b)} If $0<\la_j^*(\rho)<\infty$, then $M_j$ is linearly stable for
$0<\la<\la_j^*(\rho)$ and linearly unstable for $\la>\la_j^*(\rho)$.

\noindent {\rm (c)} If $\la_j^*(\rho)=\infty$, then $M_j$ is linearly stable for every $\la>0$.
\end{lemma}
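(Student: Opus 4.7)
The plan is to deduce the lemma directly from the spectral decomposition of Lemma \ref{lm3.1} combined with the monotonicity of the principal eigenvalue from Lemma \ref{rm1.4}. With the sign convention in \eqref{1.4}, the vertex $M_j$ is linearly stable iff every eigenvalue of \eqref{3.3} is strictly positive, and linearly unstable iff some eigenvalue is strictly negative. Lemma \ref{lm3.1} identifies the spectrum of \eqref{3.3} as
\[
\bigcup_{i\in\tilde I_j} E_i \;\cup\; \{\mu^{(\tilde j)} + \rho : \mu^{(\tilde j)} \in E_{\tilde j}\},
\]
so the decisive quantity is the smallest principal eigenvalue among the decoupled scalar problems \eqref{3.4}, which is precisely $\mu_j^*$ as defined in \eqref{3.7b}.

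Next, I would apply Lemma \ref{rm1.4} componentwise. For each $i \in \tilde I_j$, that lemma together with the definition \eqref{lambda_0} yields $\mu_1^{(i)}(\la) > 0$ for $0 < \la < \la_0(h_{ij})$ and $\mu_1^{(i)}(\la) < 0$ for $\la > \la_0(h_{ij})$. For the shifted quantity $\mu_1^{(\tilde j)}(\la) + \rho$, strict monotonicity of $\mu_1^{(\tilde j)}$ on $(\la_0(h_{\tilde j j}), \infty)$ combined with $\mu_1^{(\tilde j)}(\la) \to -\infty$ as $\la \to \infty$ (Lemma \ref{lm1.2}) guarantees the existence and uniqueness of the threshold $\la_0(h_{\tilde j j}, \rho)$ constructed in the paragraph preceding the statement, together with $\mu_1^{(\tilde j)}(\la) + \rho > 0$ below that threshold and $< 0$ above. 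Taking the minimum over $i \in \tilde I_j$ and $\tilde j$ reproduces exactly the threshold $\la_j^*(\rho)$ of \eqref{3.7a}, and gives $\mu_j^* > 0$ on $(0, \la_j^*(\rho))$ and $\mu_j^* < 0$ on $(\la_j^*(\rho), \infty)$.

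The three assertions of the lemma are then read off from the three possible locations of $\la_j^*(\rho)$ in $[0, \infty]$: part (a) when $\la_j^*(\rho) = 0$ the stable window is empty; part (b) when $0 < \la_j^*(\rho) < \infty$ one gets the stated bifurcation at $\la = \la_j^*(\rho)$; and part (c) when $\la_j^*(\rho) = \infty$, which forces $h_{kj} \le 0$ on $\bar\Omega$ for every $k \in I_j$, so stability persists for every $\la > 0$. No serious obstacle is expected; the analytic content is carried by Lemmas \ref{lm1.2}, \ref{rm1.4}, and \ref{lm3.1}. The one point requiring care is to verify that the construction of $\la_0(h_{\tilde j j}, \rho)$ handles the degenerate sub-cases $\la_0(h_{\tilde j j}) \in \{0, \infty\}$ uniformly, so that the minimum in \eqref{3.7a} is well defined as an element of $[0, \infty]$ and all three parts of the lemma fit into a single framework.
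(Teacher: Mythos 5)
Your proposal is correct and follows essentially the same route as the paper: the paper likewise combines the spectral decomposition of Lemma \ref{lm3.1} with the sign and monotonicity information from Lemma \ref{rm1.4} (and the construction of $\la_0(h_{\tilde j j},\rho)$ in the paragraph preceding the statement) to show $\mu_j^*>0$ for $0<\la<\la_j^*(\rho)$ and $\mu_j^*<0$ for $\la>\la_j^*(\rho)$, then reads off (a)--(c) from the location of $\la_j^*(\rho)$ in $[0,\infty]$. Your attention to the degenerate cases $\la_0(h_{\tilde j j})\in\{0,\infty\}$ matches the paper's treatment (including Remark \ref{rm3.3} for $h_{\tilde j j}\equiv 0$).
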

Notice that if $\rho=0$, then the conclusions in Lemma \ref{lm3.2} are established in \cite[p.\,637]{LN2006}.

\begin{remark}\label{rm3.3}\rm
If $h_{\tilde{j}j}(x)\equiv 0$ and $\rho>0$, then from \eqref{3.4a} with $k=\tilde{j}$ we see
that $\mu_1^{(\tilde{j})}(\la)=0$ for every $\la>0$ and thus
$\mu_1^{(\tilde{j})}(\la)+\rho>0$ for every $\la>0$. Therefore, when
$h_{\tilde{j}j}(x)\equiv 0$ and $\rho>0$, we set $\la_0(h_{\tilde{j}j},\rho)=\infty$ and
the conclusions in Lemma \ref{lm3.2} still hold.
\end{remark}

\begin{theorem}\label{thm:monos_stability}
Suppose (A) and that \eqref{bar_si_max} holds for some $i\in I$. Then we have for every $\rho\ge 0$:

\noindent {\rm (a)} Every $M_j$ other than $M_i$ is linearly unstable.

\noindent {\rm (b)} Let $\la_i^*(\rho)$ be given by \eqref{3.7a}. Then $0<\la_i^*(\rho)<\infty$ and $M_i$ is linearly stable if $0<\la<\la_i^*(\rho)$; $M_i$ is linearly unstable if $\la>\la_i^*(\rho)$.
\end{theorem}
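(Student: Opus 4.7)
The plan is to reduce both parts to Lemma~\ref{lm3.2} via a direct sign analysis of the averages $\bar h_{k\ell}=\bar s_k-\bar s_\ell$ that determine the quantities entering \eqref{3.7a}. The two ingredients I need are the symmetry \eqref{s14_s23}, which yields $\bar s_{\tilde m}=-\bar s_m$, and the strict maximum hypothesis \eqref{bar_si_max} (which in particular forces $\bar s_i > 0$, since $\bar s_i > \bar s_{\tilde i} = -\bar s_i$). It is also worth noting upfront that, by \eqref{s1234}, whenever $\tilde k\neq \ell$ the difference $h_{k\ell}=s_k-s_\ell$ is $\pm\a$ or $\pm\be$, hence sign-changing on $\Om$ by assumption~(A).

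\textbf{Part (a).} Fix $j\in I\setminus\{i\}$; I aim to show $\la_j^\ast(\rh)=0$, after which Lemma~\ref{lm3.2}(a) forces $M_j$ to be linearly unstable. Two cases arise. If $j\in\tilde I_i$, then $i\in\tilde I_j$ (because $\tilde j\neq i$), so $k=i$ is admissible in the minimum \eqref{3.7a}; \eqref{bar_si_max} gives $\bar h_{ij}>0$ and then \eqref{lambda_0} yields $\la_0(h_{ij})=0$. If instead $j=\tilde i$, then $\tilde I_j=\tilde I_i$; for any $k\in\tilde I_j$, applying \eqref{bar_si_max} to both $k$ and $\tilde k$ together with \eqref{s14_s23} gives $\bar s_k>-\bar s_i=\bar s_j$, so $\bar h_{kj}>0$ and again $\la_0(h_{kj})=0$. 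In either case $\la_j^\ast(\rh)=0$.

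\textbf{Part (b).} The plan is to verify $0<\la_i^\ast(\rh)<\infty$ and then quote Lemma~\ref{lm3.2}(b). For each $k\in I_i$, \eqref{bar_si_max} gives $\bar h_{ki}=\bar s_k-\bar s_i<0$. For the two indices $k\in\tilde I_i$, $h_{ki}$ is a sign-changing multiple of $\a$ or $\be$, so \eqref{lambda_0} supplies $0<\la_0(h_{ki})=\la^\ast(h_{ki})<\infty$; this alone yields the upper bound. For the remaining term $\la_0(h_{\tilde i,i},\rh)$, the key observation is that $\bar h_{\tilde i,i}=-2\bar s_i$ is strictly negative (since $\bar s_i>0$), so $h_{\tilde i,i}\not\equiv 0$; hence either $h_{\tilde i,i}$ is sign-changing with $\la_0(h_{\tilde i,i})=\la^\ast(h_{\tilde i,i})>0$, or $h_{\tilde i,i}\le 0$ and $\la_0(h_{\tilde i,i})=\infty$. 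The discussion preceding \eqref{3.7} then gives $\la_0(h_{\tilde i,i},\rh)\ge\la_0(h_{\tilde i,i})>0$ for every $\rh\ge 0$. Taking the minimum in \eqref{3.7a} delivers $\la_i^\ast(\rh)>0$, and Lemma~\ref{lm3.2}(b) closes the proof.

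\textbf{Main obstacle.} Once the sign diagnostics are in place the argument is essentially bookkeeping; the only mildly subtle point is recognizing that \eqref{bar_si_max} rules out the degenerate possibility $h_{\tilde i,i}\equiv 0$ (which would otherwise activate Remark~\ref{rm3.3}) and that the $\rh$-shift built into $\la_0(\cdot,\rh)$ preserves the strict positivity inherited from $\la_0(h_{\tilde i,i})$.
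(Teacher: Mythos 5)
Your argument is correct and follows essentially the same route as the paper's proof: both parts are reduced to Lemma~\ref{lm3.2} by computing the sign of $\la_j^*(\rho)$ from \eqref{3.7a}, with the same case split $j\neq\tilde i$ versus $j=\tilde i$ in part (a) and the same use of \eqref{3.9} and assumption (A) to bound $\la_0(h_{ki})$ in part (b). The only (harmless) differences are that you rederive the inequality $\bar s_k>\bar s_{\tilde i}$ directly from \eqref{bar_si_max} and \eqref{s14_s23} instead of citing \eqref{3.8a}, and you spell out more explicitly why $\la_0(h_{\tilde i i},\rho)>0$.
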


If $\rho=0$, Theorem \ref{thm:monos_stability} follows directly from Theorem 1.5 in \cite{LN2006}. Its proof inspired the following proof.

\begin{proof}
(a) For each $j\neq i$, there are two cases. If $j\neq \tilde{i}$, i.e, $i\neq
\tilde{j}$, by \eqref{bar_si_max} we have $\bar{s}_i>\bar{s}_j$,
whence we obtain $\la_0(h_{ij})=0$ from \eqref{lambda_0} and \eqref{1.20}. Therefore,
\eqref{3.7a} yields $\la_{j}^*(\rho)=0$.
If $j=\tilde{i}$, by \eqref{3.8a} we have $\bar{s}_k>\bar{s}_j$ and hence $\la_0(h_{kj})=0$ for $k\in \tilde{I}_i=\tilde{I}_j$.
Therefore, \eqref{3.7a} implies again that $\la_{j}^*(\rho)=0$. Now we deduce from Lemma \ref{lm3.2}(a) that $M_j$ is unstable for every $\la>0$, which proves part (a).

(b) In view of \eqref{bar_si_max} and \eqref{lambda_0}, we have
$\la_0(h_{ki})>0$ for every $k\in\tilde{I}_i$. From \eqref{s1234} we observe
that
\begin{equation}\label{3.9}
	s_m(x)-s_{l}(x)\in \{\pm\alpha(x),\pm\beta(x)\}\quad \mbox{for every } l\in
	I \mbox{ and every } m\in\tilde{I}_l\,.
\end{equation}
Since both $\alpha(x)$ and $\beta(x)$ change sign and $k\in\tilde{I}_i$, it
follows from \eqref{3.9} and \eqref{lambda_0} that $\la_0(h_{ki})<\infty$.
On account of the definition of $\la_0(h_{\tilde{i}i},\rho)$ we have $\la_0(h_{\tilde{i}i},\rho)>0$. Then \eqref{3.7a}
implies that $0<\la_{i}^*(\rho)<\infty$ and part (b) follows immediately
from Lemma \ref{lm3.2}(b).
\end{proof}

\begin{remark}\label{rm3.5}\rm
Because $\mu_1^{(\tilde{j})}(\la)$ is strictly decreasing for $\la>\la_0(h_{\tilde{j}j})$ by Lemma \ref{lm1.2}(b), the critical value
$\la_0(h_{\tilde{j}j},\rho)$ is strictly increasing in $\rho$ by its definition. Therefore, \eqref{3.7a}
implies that $\la_j^*(\rho)$ is nondecreasing in $\rho$. Thus, Theorem
\ref{thm:monos_stability}(b) shows that increasing the recombination rate facilitates
stability of the monomorphic equilibrium with the highest spatially averaged fitness.
\end{remark}

\begin{theorem}\label{thm:M1_globallystable}
Suppose (A) and that \eqref{bar_si_max} holds for some $i\in I$. Then, for every fixed $r\ge 0$ and $s>0$, there exists $d_0=d_0(r,s)\gg1$ such that $M_i$ is globally asymptotically stable for \eqref{dynamics_dsr_pi} if $d>d_0$.
\end{theorem}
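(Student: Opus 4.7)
The plan is to prove this as a singular-perturbation result: for $d$ large (equivalently $\la=s/d$ and $\rho=r/d$ small), solutions of \eqref{dynamics_dsr_pi} are driven by diffusion toward the subspace of spatially constant functions, where the dynamics reduce to an ODE on the simplex $\mathbf X$ whose global attractor is $M_i$.

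First I would identify the formal $d\to\infty$ limit. Decompose $p(x,t)=P(t)+\tilde p(x,t)$ with $P(t)=|\Om|^{-1}\int_\Om p(x,t)\,dx$ and $\int_\Om \tilde p(x,t)\,dx=0$. The spectral gap $d\mu_2$ of $-d\De$ restricted to the zero-mean subspace (where $\mu_2$ denotes the first positive Neumann eigenvalue of $-\De$) together with uniform boundedness of the nonlinearity on $\mathbf X$ gives, via a standard energy estimate, $\|\tilde p(\cdot,t)\|_{L^\infty}=O(1/d)$ uniformly for $t\ge t_\ast(d)$ and uniformly in the initial data. Integrating \eqref{dynamics_dsr_pi_a} over $\Om$ then shows
\begin{equation*}
\dot P_j=sP_j\bigl(\bar s_j-\langle\bar s,P\rangle\bigr)-\eta_j rD(P)+O\bigl((r+s)/d\bigr),\qquad \langle\bar s,P\rangle:=\sum_k\bar s_kP_k,
\end{equation*}
which is an $O(1/d)$ perturbation of the \emph{averaged ODE} obtained from \eqref{dynamics_dsr_pi_a} by dropping the Laplacian and replacing $\alpha(x),\beta(x)$ in \eqref{S_i} with their spatial means $\bar\alpha,\bar\beta$.

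Next I would show the averaged ODE has $M_i$ as the global attractor on $\mathbf X\setminus\mathbf X_0$. Using $\bar s_1+\bar s_4=\bar s_2+\bar s_3=0$, one obtains $\sum_k\eta_k\bar s_k=0$, and a direct computation gives the Fisher-type identity
\begin{equation*}
\frac{d}{dt}\langle\bar s,P\rangle=s\Bigl(\sum_k\bar s_k^2P_k-\langle\bar s,P\rangle^2\Bigr)\ge 0.
\end{equation*}
Hence under \eqref{bar_si_max}, $V(P):=\bar s_i-\langle\bar s,P\rangle\ge 0$ is a Lyapunov function on $\mathbf X$; in the generic case its zero set is exactly the four vertices $\{M_j\}$, and LaSalle's invariance principle restricts $\omega$-limits to $\{M_j\}$. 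Each $M_j$ with $j\ne i$ is a hyperbolic saddle — an eigenvector computation parallel to the proof of Theorem~\ref{thm:monos_stability}(a) shows that its stable directions, when intersected with the tangent cone of $\mathbf X$ at $M_j$, point only along an edge contained in $\mathbf X_0$ — so the basin of $M_j$ within $\mathbf X$ lies in $\mathbf X_0$. Invoking Lemma~\ref{flow_into_interior} for $\rho>0$, and the one-locus four-allele theory from Section~\ref{sec:no_rec} for $\rho=0$, it follows that $M_i$ attracts every orbit of the averaged ODE starting in $\mathbf X\setminus\mathbf X_0$.

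Finally I would glue these two steps together via continuous dependence. The semigroup $\Psi_d$ generated by \eqref{dynamics_dsr_pi} is compact and dissipative on $\mathbf X$, so possesses a global attractor $\mathcal A_d$; standard upper semicontinuity of attractors under regular perturbation, combined with the previous step, yields $\mathcal A_d\to\{M_i\}$ in Hausdorff distance as $d\to\infty$. Since $\la=s/d<\la_i^*(\rho)$ for $d$ large, Theorem~\ref{thm:monos_stability}(b) gives linear stability of $M_i$; consequently, for $d$ sufficiently large, $\mathcal A_d$ is an invariant compact set contained in the basin of attraction of $M_i$, forcing $\mathcal A_d=\{M_i\}$ and hence global asymptotic stability. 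The main obstacle is the hyperbolicity/basin analysis of the unstable vertices in degenerate subcases of \eqref{bar_si_max} such as $\bar\alpha=\pm\bar\beta$: there, the zero set of $\mathrm{Var}_P(\bar s)$ contains an entire edge of the simplex, and one must verify either via recombination (for $\rho>0$, by Lemma~\ref{flow_into_interior}) or via the one-locus reduction (for $\rho=0$) that no orbit starting in $\mathbf X\setminus\mathbf X_0$ accumulates on such an edge.
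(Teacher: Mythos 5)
Your overall strategy---average the system over $\Om$, show that mean fitness $\langle\bar s,P\rangle$ is a Lyapunov function for the resulting ODE on the simplex so that $M_i$ attracts $\mathbf X\setminus\mathbf X_0$, and then transfer this back to the PDE for large $d$---is exactly the strategy of the paper. The difference is that the paper does not redo the averaging and perturbation analysis: it verifies hypothesis (A4) of \cite{LN2006} for the averaged system \eqref{3.11} (using Ewens' result that mean fitness increases for additive fitnesses, plus the observation that all equilibria of \eqref{3.11} are vertices and only $M_i$ is stable) and then invokes \cite[Theorem 2.1]{LN2006}, which is precisely the strong-migration perturbation theorem you are trying to reconstruct. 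Your step 2 (the identity $\tfrac{d}{dt}\langle\bar s,P\rangle = s\,\mathrm{Var}_P(\bar s)\ge 0$, using $\sum_k\eta_k\bar s_k=0$ so that recombination does not spoil monotonicity) is correct and is the same Lyapunov argument the paper cites.

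The genuine gap is in your final gluing step. The global attractor $\mathcal A_d$ of $\Psi$ on $\mathbf X$ contains \emph{every} equilibrium of \eqref{dynamics_dsr_pi}---in particular the unstable vertices $M_j$, $j\neq i$, which are equilibria for all $d$---so $\mathcal A_d$ can never equal $\{M_i\}$, cannot converge to $\{M_i\}$ in Hausdorff distance, and cannot be contained in the basin of attraction of $M_i$. Upper semicontinuity of global attractors is therefore the wrong tool here: it only yields $\limsup_d\mathcal A_d\subset\mathcal A_0$, where $\mathcal A_0$ is the attractor of the averaged ODE, and $\mathcal A_0$ itself contains the unstable vertices and their connecting orbits. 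What is actually needed is trajectory-wise control: one must show that no orbit of the PDE starting in $\mathbf X\setminus\mathbf X_0$ can accumulate on an unstable equilibrium of the averaged flow, which requires that the chain-recurrent set of the averaged system consist of finitely many \emph{hyperbolic} equilibria and a shadowing/isolating-block argument near each of them. This is the content of (A4) and the proof of \cite[Theorem 2.1]{LN2006}, and it is not supplied by your sketch. Relatedly, you correctly flag but do not resolve the subcase $\bar\alpha=\pm\bar\beta$ (still compatible with \eqref{bar_si_max}): there the variance vanishes on an entire edge, and for $r=0$ that edge is a continuum of equilibria of the averaged ODE, so hyperbolicity fails and the LaSalle/saddle analysis in your step 2 does not close either. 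Until the perturbation step is replaced by an argument at the level of individual trajectories (or by the citation the paper uses), the proof is incomplete.
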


\begin{proof}
The proof is based on Theorem 2.1 in \cite{LN2006}. We set
\begin{equation}\label{3.10}
 T_i(x,p)=s S_i(x,p)-\eta_i r D(p)\,,\quad i\in I\,.
\end{equation}
Then the spatially averaged system (2.3) of \cite{LN2006} becomes
\begin{subequations}\label{3.11}
\begin{equation}
	\frac{dq^*_i}{d\tau}=s \bar{S}_i(q^*)-\eta_i r D(q^*), \label{3.11a}
\end{equation}
\begin{equation}
	q^*(0)\in \mbox{int}\, \Delta_4\,, \label{3.11b}
\end{equation}
\end{subequations}
where
\begin{equation}\label{3.13}
	\Delta_4:=\{p\in \Reals^4: p_i\ge 0~\mbox{for every}~i\in I,
	~\sum_{j=1}^{4}p_j=1\}\,,
\end{equation}
\begin{subequations}\label{3.12}
\begin{align}
	\bar{S}_1(q^*) &= q^*_1[\bar{\alpha}(q^*_3+q^*_4)+\bar{\beta}(q^*_2+q^*_4)]\,, \label{3.12a} \\
	\bar{S}_2(q^*) &= q^*_2[\bar{\alpha}(q^*_3+q^*_4)-\bar{\beta}(q^*_1+q^*_3)]\,, \label{3.12b} \\
	\bar{S}_3(q^*) &= q^*_3[-\bar{\alpha}(q^*_1+q^*_2)+\bar{\beta}(q^*_2+q^*_4)]\,,\label{3.12c}\\
	\bar{S}_4(q^*) &= q^*_4[-\bar{\alpha}(q^*_1+q^*_2)-\bar{\beta}(q^*_1+q^*_3)]\,. \label{3.12d}
\end{align}
\end{subequations}

The system of ODEs \eqref{3.11} describes the dynamics in a simple two-locus model without migration,
epistasis, or dominance. Therefore, mean fitness is a global Lyapunov function \cite{Ewens1969}. Hence,
every solution of \eqref{3.11} converges to an equilibrium. In addition, every
equilibrium $q^*$ of \eqref{3.11} is in linkage equilibrium, i.e., it satisfies $D(q^*)=0$ (\cite{Lyubich92}, \cite{NHB1999}).

We are informed by \eqref{s1234}, \eqref{s14_s23}, and \eqref{bar_si_max} that $\bar{\alpha}\neq 0$ and $\bar{\beta}\neq
0$, whence it is clear from \eqref{3.12} that the only solutions to $\bar S_j(q^*)=0$ for every $j\in I$ are the
monomorphic equilibria $M_j$. Simple analysis of the linearized problem of \eqref{3.11} at
each $M_j$ shows that if \eqref{bar_si_max} holds for some $i\in I$, then $M_i$ is the only linearly
stable monomorphic equilibrium. The other
monomorphic equilibria are all unstable; they may have stable manifolds, but the stable manifolds are either
invariant edges corresponding to a marginal one-locus system or connect to the vertices from the exterior of the state space $\Delta_4$. Therefore, every solution of \eqref{3.11} converges to $M_i$.

Thus, we have shown that (A4) in \cite{LN2006} holds with $\hat{q}^*=M_i$. Therefore, Theorem 2.1 in \cite{LN2006} applies and, together
with statement (b), yields the global asymptotic stability of $M_i$ with respect to the full system \eqref{dynamics_dsr_pi} provided $d\gg1$.
\end{proof}

\begin{remark}\label{rem:4.7}\rm
Because the critical value $d_0$ originating from Theorem 2.1 in \cite{LN2006} may depend on $r$ and $s$, we cannot conclude that for every fixed $\rh\ge0$, there exists a $\la_0 \ll 1$ such that $M_i$ is globally asymptotically stable for \eqref{dynamics_pi} if $\la < \la_0$. However, we conjecture that it is true.
\end{remark}

In the nongeneric case \eqref{3.8b}, we obtain the following result.

\begin{proposition}\label{th3.6}
Suppose that (A) and \eqref{3.8b} hold. Then, for every $\rho\ge 0$, all monomorphic equilibria are linearly unstable.
\end{proposition}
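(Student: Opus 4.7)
The plan is to reduce the nongeneric hypothesis \eqref{3.8b} to a clean statement about $\bar\alpha$ and $\bar\beta$, and then to invoke Lemma \ref{lm3.2}(a) at each vertex $M_j$ by producing an index $i\in\tilde I_j$ with $\overline{h_{ij}}=0$, which forces $\la_0(h_{ij})=0$ by \eqref{lambda_0} and hence $\la_j^*(\rho)=0$ by \eqref{3.7a}.

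First I would simplify \eqref{3.8b}. Since $s_1=-s_4$ and $s_2=-s_3$ pointwise by \eqref{s14_s23}, the equalities $\bar s_i=-\bar s_{\tilde i}$ and $\bar s_j=-\bar s_{\tilde j}$ are automatic, so \eqref{3.8b} reduces to the existence of $i\in I$ and $j\in\tilde I_i$ with $\bar s_i=\bar s_j$. Running through the eight admissible pairs and using \eqref{s1234}, each such equation simplifies to either $\bar\alpha=0$ or $\bar\beta=0$; in other words, under (A) the condition \eqref{3.8b} is equivalent to $\bar\alpha\,\bar\beta=0$.

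Next I would show $\la_j^*(\rho)=0$ for each $j\in I$. By \eqref{3.9}, for the two indices $i\in\tilde I_j$ the functions $h_{ij}=s_i-s_j$ lie in $\{\pm\alpha,\pm\beta\}$, and a direct enumeration shows that one of the two choices yields $h_{ij}=\pm\alpha$ and the other yields $h_{ij}=\pm\beta$ (for instance, when $j=1$, $h_{21}=-\beta$ and $h_{31}=-\alpha$; the other three cases are analogous). Under (A), both $\alpha$ and $\beta$ are nonconstant sign-changing functions, so whichever of $\bar\alpha,\bar\beta$ vanishes by the previous step selects, for each $j$, an $i\in\tilde I_j$ with $h_{ij}$ nonconstant, sign-changing, and of mean zero. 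Definition \eqref{lambda_0} then gives $\la_0(h_{ij})=0$, whence $\la_j^*(\rho)\le\la_0(h_{ij})=0$ in view of \eqref{3.7a}, and Lemma \ref{lm3.2}(a) immediately yields that $M_j$ is linearly unstable for every $\la>0$ and every $\rho\ge 0$.

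The only mildly delicate point I anticipate is the use of the $\bar h\ge 0$ branch in \eqref{lambda_0} at the boundary case $\bar h=0$ (rather than $>0$); this is consistent with Lemma \ref{lm1.2}(a), which ensures $\mu_1(\tilde\la)<0$ for every $\tilde\la>0$ precisely in this regime, so no separate argument is needed.
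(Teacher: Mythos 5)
Your proof is correct and follows essentially the same route as the paper's: both arguments produce, for each vertex $M_j$, an index $i\in\tilde I_j$ with $h_{ij}$ sign-changing and of zero mean (the paper does this directly for the four indices $i,j,\tilde i,\tilde j$ appearing in \eqref{3.8b}, while you first translate \eqref{3.8b} into $\bar\alpha\,\bar\beta=0$), and then conclude $\la_0(h_{ij})=0$, hence $\la_j^*(\rho)=0$ by \eqref{3.7a} and instability by Lemma \ref{lm3.2}(a). Your explicit enumeration and the remark on the $\bar h=0$ boundary case are accurate but add nothing beyond the paper's argument.
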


\begin{proof} In view of \eqref{3.8b}, \eqref{3.9}, and \eqref{lambda_0}, for the $i,j$ in \eqref{3.8b}, we
have
\begin{equation}
	\la_0(h_{ji})=\la_0(h_{ij})=\la_0(h_{\tilde{j}\tilde{i}})=\la_0(h_{\tilde{i}\tilde{j}})=0\,.
\end{equation}
We conclude from \eqref{3.7a} that $\la_k^*(\rho)=0$ for every
$\rho\ge 0$ and every $k\in I$.  From Lemma \ref{lm3.2}(a),
we infer that for every $\rho\ge 0$ each $M_k$ is unstable for every $\la>0$.
\end{proof}

\subsection{Equilibria with one polymorphic locus}\label{sec:single-locus_poly}
From \eqref{s14_s23} we obtain $h_{12}=h_{34}$ and $h_{13}=h_{24}$. Therefore, the edge equilibria $\hat{p}^{(12)}$ and $\hat{p}^{(34)}$ as well as $\hat{p}^{(13)}$ and $\hat{p}^{(24)}$ exist only pairwise, i.e., if one member of a pair exists then also the other. We call them single-locus polymorphisms, or single-locus clines, because at each of these equilibria one locus maintains both alleles at positive frequency, whereas at the other locus one allele is fixed. For instance, $\hat{p}^{(12)}(x)$ describes a cline at locus $\B$ with allele $A$ fixed at locus $\A$. It is well known that a one-locus cline is globally asymptotically stable within its edge (Theorem \ref{thm:singlelocus}). However, determining stability of these equilibria with respect to the full system \eqref{dynamics_pi} is a challenging task and has been resolved only for special cases (see below).

\section{No recombination}\label{sec:no_rec}
In this section, we treat the case $r=0$, i.e., $\rho=0$. Therefore, the results depend only on $s/d=\la$, and we use \eqref{dynamics_pi} throughout. Because $\rho=0$, we may regard each gamete $i\in I$ as an allele at one locus. Therefore, the system \eqref{dynamics_pi} simplifies to a one-locus four-allele model, and the results of Lou and Nagylaki \cite{LN2002,LN2004,LN2006} on multiallelic one-locus
models apply. We consider various assumptions on the functions $\alpha(x)$ and $\beta(x)$ and start with the most specific and simplest scenario that is of biological interest.

\subsection{The functions $\alpha(x)$ and $\beta(x)$ have the same spatial dependence}
We assume that
\begin{subequations}\label{2.1}
\begin{equation}\label{2.1a}
	\alpha (x)=a g(x)\,,\;\beta (x)=b g(x)\,,
\end{equation}
where
\begin{equation}\label{2.1b}
	\mbox{the constants $a$ and $b$ are positive and the function $g(x)$ changes sign.}
\end{equation}
\end{subequations}
Then \eqref{s1234} reduces to
\begin{eqnarray}\label{2.2}
	s_1(x)=\tfrac{1}{2}(a+b)g(x)\,,\,&&
	s_2(x)=\tfrac{1}{2}(a-b)g(x)\,,\,\nonumber\\
	s_3(x)=\tfrac{1}{2}(b-a)g(x)\,,\,&&
	s_4(x)=-\tfrac{1}{2}(a+b)g(x)\,.
\end{eqnarray}

By \eqref{2.1}, the conditions (A2) and (A3) in \cite{LN2002} hold with
\begin{subequations}
\begin{equation}\label{2.7}
	\sigma (x)= h_{14}(x)=(a+b)g(x)\,,
\end{equation}
\begin{equation}\label{2.8}
	\gamma_2=a(a+b)^{-1}\,,\quad \gamma_3=b(a+b)^{-1}\,.
\end{equation}
\end{subequations}
Therefore, we obtain the following results directly from Theorems 3.2 and 3.3 in \cite{LN2002}.

\begin{proposition}\label{prop2.1}
If $\rho=0$ and \eqref{2.1} holds, system \eqref{dynamics_pi} has always a globally attracting equilibrium.

\noindent {\rm (a)} Suppose that $\bar{g}<0$. Then $(0,0,0,1)^{T}$ is globally
asymptotically stable if $0<\la\le\la^*(\sigma)$, and
$\hat{p}^{(14)}$ is globally asymptotically stable if
$\la>\la^*(\sigma)$.

\noindent {\rm (b)} Suppose that $\bar{g}>0$. Then $(1,0,0,0)^{T}$ is globally
asymptotically stable if $0<\la\le \la^*(-\sigma)$, and
$\hat{p}^{(14)}$ is globally asymptotically stable if
$\la>\la^*(-\sigma)$.

\noindent {\rm (c)} Suppose that $\bar{g}=0$. Then $\hat{p}^{(14)}$ is
globally asymptotically stable for every $\la>0$.
\end{proposition}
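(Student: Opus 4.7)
The plan is to observe that when $\rho = 0$, the four PDEs in \eqref{dynamics_pi} decouple from the linkage-disequilibrium term, and one may view each $p_i$ as the frequency of one of four alleles at a single locus with selection coefficients $s_i(x)$ given by \eqref{2.2}. The statement will then follow directly from Theorems 3.2 and 3.3 of \cite{LN2002} on the multiallelic one-locus diffusion model, once their structural hypotheses are matched to the present notation. So the core task is a verification rather than a new analytic argument.

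To match hypotheses, I would set $\sigma(x) := h_{14}(x) = s_1(x) - s_4(x) = (a+b)g(x)$ and check directly from \eqref{2.2} that
\begin{equation}
s_i(x) - s_4(x) = \gamma_i\,\sigma(x)\qquad\text{for } i = 1,2,3,4,
\end{equation}
with $\gamma_1 = 1$, $\gamma_2 = a/(a+b)$, $\gamma_3 = b/(a+b)$, $\gamma_4 = 0$, all lying in $[0,1]$. Because the dynamics of \eqref{dynamics_pi} with $\rho=0$ depend only on the differences $s_i - s_j$, this rank-one proportionality is exactly the structural assumption imposed in \cite{LN2002} that reduces the four-allele problem to an essentially scalar one on the $14$-edge, and that rules out any equilibrium with the intermediate alleles $2$ or $3$ present at positive frequency.

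Invoking \cite[Thms.\,3.2--3.3]{LN2002} then yields all three cases of the proposition, because $\bar\sigma = (a+b)\bar g$ has the same sign as $\bar g$. When $\bar g < 0$, the vertex $(0,0,0,1)^T$ (fixation of allele $4$, which has the highest spatial average fitness) is globally attracting for $0 < \la \le \la^*(\sigma)$ and transcritically bifurcates to the edge equilibrium $\hat p^{(14)}$ at $\la = \la^*(\sigma)$; the case $\bar g > 0$ is the mirror image, obtained by swapping alleles $1$ and $4$ (equivalently, replacing $g$ by $-g$, so that $\la^*(\sigma)$ is replaced by $\la^*(-\sigma)$); and $\bar g = 0$ forces $\hat p^{(14)}$ to be the global attractor for every $\la > 0$.

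The only mild obstacle is the dictionary translation between the two papers' normalizations, together with the verification that the intermediate alleles $2$ and $3$ cannot sustain a globally attracting equilibrium. The latter follows from $0 < \gamma_2, \gamma_3 < 1$: at every point $x$ the fitnesses of alleles $2$ and $3$ are strictly bracketed by those of alleles $1$ and $4$, so one of alleles $1$ or $4$ (depending on the sign of $g(x)$) strictly outcompetes them, and no equilibrium charging the $2$- or $3$-coordinate can be globally attracting. Once this identification is complete, the conclusion of Proposition \ref{prop2.1} is a direct quotation of the cited theorems.
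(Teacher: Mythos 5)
Your proposal is correct and follows essentially the same route as the paper: the paper's proof likewise sets $\sigma(x)=h_{14}(x)=(a+b)g(x)$, verifies the structural hypotheses (A2)--(A3) of \cite{LN2002} with $\gamma_2=a(a+b)^{-1}$ and $\gamma_3=b(a+b)^{-1}$, and then quotes Theorems 3.2 and 3.3 of \cite{LN2002} to obtain all three cases. Your explicit check that $s_i-s_4=\gamma_i\sigma$ with $\gamma_i\in[0,1]$ and the sign correspondence $\operatorname{sgn}\bar\sigma=\operatorname{sgn}\bar g$ is exactly the dictionary the paper relies on.
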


\subsection{The functions $\alpha(x)$ and $\beta(x)$ have the same sign}
We assume that
\begin{equation}\label{2.9}
	\beta (x)=\alpha (x)\gamma(x)\,,\; \mbox{where $\gamma(x)>0$ for every $x\in
	\bar{\Omega}$}.
\end{equation}
Then \eqref{s1234} reduces to
\begin{eqnarray}\label{2.10}
	 s_1(x)=\tfrac{1}{2}(1+\gamma(x))\alpha(x)\,,&&
	 s_2(x)=\tfrac{1}{2}(1-\gamma(x))\alpha(x)\,,\\
	 s_3(x)=\tfrac{1}{2}(\gamma(x)-1)\alpha(x)\,,&&
	 s_4(x)=-\tfrac{1}{2}(1+\gamma(x))\alpha(x)\,.\nonumber
\end{eqnarray}

The following result follows directly from Remark 3.3 in \cite{LN2006}. We present
a proof here using the idea mentioned there.

\begin{proposition}\label{prop2.3}
Assume that $\rho=0$, that the function $\alpha(x)$ changes sign, and that
\eqref{2.9} holds. Then $\hat{p}^{(14)}$ is globally asymptotically stable
for $\la\gg 1$.
\end{proposition}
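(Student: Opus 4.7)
The plan is to exploit the fact that when $\rh=0$ the system \eqref{dynamics_pi} reduces to a four-allele one-locus model, so that the framework of Lou--Nagylaki \cite{LN2002,LN2004,LN2006} applies. Under \eqref{2.9} we have $s_4=-s_1$ and $s_3=-s_2$, so $h_{14}(x)=(1+\ga(x))\a(x)$ changes sign in $\Om$. Theorem~\ref{thm:singlelocus}, applied on the invariant edge $\{p_2=p_3=0\}$, gives existence of $\hat p^{(14)}$ for $\la>\la_{14}$, and the standard singular-limit analysis for one-locus clines yields $\theta_{14}(\cdot;\la)\to\chi_{\{\a>0\}}$ in $L^p(\Om)$ as $\la\to\infty$, with uniform convergence on compact subsets of $\{\a\ne0\}$.

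I would first establish linear stability of $\hat p^{(14)}$ for $\la\gg1$. Inspection of the Jacobian \eqref{Jacob} at $p_2=p_3=0$, $p_1=\theta_{14}$, $p_4=1-\theta_{14}$ (with $\rh=0$) shows that the $\phi_2$- and $\phi_3$-equations decouple from the $(\phi_1,\phi_4)$-block, so the spectrum of the linearisation is the union of the spectrum of the edge problem (positive by Theorem~\ref{thm:singlelocus}) with the principal eigenvalues of the two scalar invasion problems
\begin{equation*}
	\De\phi + \la\, m_i(x;\la)\phi + \mu\phi = 0 \quad\text{in }\Om,\qquad \partial_\nu\phi=0 \quad\text{on }\partial\Om,\qquad i=2,3,
\end{equation*}
where a direct computation from \eqref{2.10} gives
\begin{equation*}
	m_2(x;\la) = \a(x)\bigl[1-(1+\ga(x))\theta_{14}\bigr],\qquad m_3(x;\la) = \a(x)\bigl[\ga(x)-(1+\ga(x))\theta_{14}\bigr].
\end{equation*}
As $\la\to\infty$, $m_2\to-\a\ga$ on $\{\a>0\}$ and $m_2\to\a$ on $\{\a<0\}$, both strictly negative, and analogously for $m_3$. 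Using the variational characterisation of the principal eigenvalue together with the $L^1$-convergence $\theta_{14}\to\chi_{\{\a>0\}}$, one absorbs the (vanishing) positive contribution of $m_i$ in the transition layer near $\{\a=0\}$ by the strong off-layer negativity of the limit, and obtains $\mu_1^{(i)}(\la)>0$ for $i=2,3$ once $\la$ is large enough.

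Next I would rule out every other equilibrium as a possible $\om$-limit for $\la\gg1$. The monomorphic equilibria are unstable by Theorem~\ref{thm:monos_stability}(b) in the generic case (note $\bar s_1=-\bar s_4$, $\bar s_2=-\bar s_3$, and $|s_1(x)|\ge|s_2(x)|$ pointwise because $\ga(x)>0$, so \eqref{bar_si_max} holds with $i\in\{1,4\}$) and by Proposition~\ref{th3.6} in the nongeneric case \eqref{3.8b}. For the remaining edge equilibria $\hat p^{(ij)}$ with $\{i,j\}\ne\{1,4\}$, a parallel invasion-eigenvalue computation shows that the missing member of $\{1,4\}$ invades for $\la$ large, using an identical singular-limit analysis of the relevant $\theta_{ij}$. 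Finally, since the $\rh=0$ system is a four-allele one-locus model, the convergence theory of \cite{LN2002,LN2004} guarantees that every trajectory in $\mathbf X$ converges to some equilibrium; combining with the instability statements leaves $\hat p^{(14)}$ as the only possible $\om$-limit for initial data with $p_1(\cdot,0),p_4(\cdot,0)\not\equiv 0$, yielding the asserted global asymptotic stability.

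The main obstacle is the invasion-eigenvalue estimate for $\hat p^{(14)}$: the sign of $m_i(x;\la)$ need not be non-positive inside the $\la$-dependent transition layer of $\theta_{14}$ near $\{\a=0\}$, so the variational argument must absorb a positive contribution from a set of small but positive measure by exploiting the strong off-layer negativity of the limit function $m_i^\infty$. The instability of the remaining edges then follows by a mirror-image version of the same analysis.
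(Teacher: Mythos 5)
Your spectral analysis of the linearization at $\hat p^{(14)}$ is sound in outline (the weights $m_2=\a[1-(1+\ga)\th_{14}]$ and $m_3=\a[\ga-(1+\ga)\th_{14}]$ are computed correctly, and their limits are indeed negative off the layer), but it essentially re-derives by hand what the paper obtains in one line by checking the pointwise condition $\max[s_2(x),s_3(x)]<\max[s_1(x),s_4(x)]$ and citing \cite[Theorem 1.6]{LN2006}. The genuine gap is in your global convergence step. You argue: every trajectory converges to some equilibrium, all equilibria other than $\hat p^{(14)}$ are linearly unstable, hence every (nondegenerate) trajectory converges to $\hat p^{(14)}$. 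This inference is invalid: a linearly unstable equilibrium can still be the $\omega$-limit of trajectories lying on its (nontrivial, possibly infinite-dimensional) stable manifold, which may well intersect the relevant set of initial data. Linear instability excludes an equilibrium as an attractor of an open set, not as a limit of \emph{all} trajectories, so ``global asymptotic stability'' does not follow. Moreover, you only discuss vertex and edge equilibria; for $\la\gg1$ the four-allele system can in principle possess face or internal equilibria, and you neither rule out their existence nor their role as $\omega$-limits. The paper closes both holes at once by a different mechanism: the pointwise domination inequalities \eqref{2.11a} and \eqref{2.11b} (which follow from \eqref{2.9}--\eqref{2.10}) feed into \cite[Corollaries 4.7 and 4.9]{LN2004} to give, for \emph{every} trajectory, uniform extinction $p_2,p_3\to0$ and uniform persistence $p_1,p_4\ge\de_i^*>0$ eventually. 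Combined with $C^2$-precompactness of orbits, this forces the $\omega$-limit set into the set of equilibria with $\hat p_2=\hat p_3=0$ and $\hat p_1,\hat p_4>0$, whose only member is $\hat p^{(14)}$. Without a persistence/extinction ingredient of this kind your argument cannot deliver convergence of all trajectories.

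A secondary, fixable error: your parenthetical claim that \eqref{bar_si_max} holds with $i\in\{1,4\}$ because $|s_1(x)|\ge|s_2(x)|$ pointwise does not follow; e.g.\ one can have $\overline{(1+\ga)\a}=0$ while $\overline{(1-\ga)\a}\neq0$, in which case the maximal average belongs to gamete $2$ or $3$. This does not harm your conclusion for $\la\gg1$ (all monomorphic equilibria are then unstable by Theorem \ref{thm:monos_stability}(b) or Proposition \ref{th3.6} regardless of which index realizes the maximum), but the justification as written is wrong.
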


\begin{proof}
By \eqref{2.9} and \eqref{2.10}, we have
\begin{subequations}\label{2.11}
\begin{equation}\label{2.11a}
	 s_2(x)<\max_{j\neq 2} s_j(x)\;\; \mbox{and} \;\; s_3(x)<\max_{j\neq 3}
	s_j(x)\quad \mbox{for every $x\in\bar{\Omega}$}\,,
\end{equation}
\begin{equation}\label{2.11b}
	 s_1(x)>\max_{j\neq 1} s_j(x)\;\;\mbox{when $\alpha(x)>0$\;\; and}\;\;
	s_4(x)>\max_{j\neq 4} s_j(x)\;\; \mbox{when $\alpha(x)<0$}\,.
\end{equation}
\end{subequations}
Let $p=(p_1, p_2, p_3, p_4)^T$ be any solution of \eqref{dynamics_pi}. Therefore, for
$\la$ sufficiently large, \eqref{2.11a} and \cite[Corollary 4.7]{LN2004} imply
that
\begin{equation}\label{2.12}
	p_i(x,t)\to 0  \;\; \mbox{uniformly in $x$ as $t\to \infty$ for $i=2,3$\,.}
\end{equation}
By \eqref{2.11b} and \cite[Corollary 4.9]{LN2004}, for $i=1, 4$, there exists
$\delta_i^*=\delta_i^*(\la)>0$
such that for all initial data that satisfy \eqref{dynamics_pi_c}, there exists $t_i^*$,
which may depend on $\la$
and the initial data, such that
\begin{equation}\label{2.15}
p_i(x,t)\ge \delta_i^*  \;\; \mbox{for every $x\in\bar{\Omega}$ and every
$t\ge t_i^*$.}
\end{equation}

Now pick any sequence $\{t_k\}_{k=1}^{\infty}$ such that $t_k\to \infty$ as
$k\to \infty$.
The estimate \cite[(3.19)]{LN2002} shows that, passing to a subsequence if
necessary, $p(x,t_k)\to \hat{p}(x)$ as $k\to\infty$ in
$C^2(\bar{\Omega})$, where $\hat{p}$ is an equilibrium of system \eqref{dynamics_pi}. Then from \eqref{2.12} and \eqref{2.15} we conclude that $\hat{p}_i(x)=0$
for $i=2, 3$ and $\hat{p}_i(x)\ge\delta_i^*$ for $i=1, 4$, respectively. Since
the only equilibrium with the
gametes 1 and 4 present, and 2 and 3 absent, is $\hat{p}^{(14)}$ (see \eqref{2.4} -- \eqref{2.6}), we must have $\hat{p}=\hat{p}^{(14)}$.
Therefore, the $\omega$-limit set of any initial data that satisfies \eqref{dynamics_pi_c} is
$\{\hat{p}^{(14)}\}$, and hence $p(x,t)\to \hat{p}^{(14)}(x)$ as $t\to\infty$.

Finally, from \eqref{2.9} and \eqref{2.10} we observe that
\begin{equation}
	\max [s_2(x),s_3(x)]<\max [s_1(x),s_4(x)]\quad\mbox{for every
	$x\in\bar{\Omega}$}\,,
\end{equation}
whence Theorem 1.6 in \cite{LN2006} informs us that $\hat{p}^{(14)}$ is
asymptotically stable for
$\la$ sufficiently large. This completes the proof.
\end{proof}

\subsection{Arbitrary functions $\alpha(x)$ and $\beta(x)$}
We recall the definition of $I_i$ from \eqref{2.17}
and make the generic assumption that \eqref{bar_si_max} holds for some $i\in I$.
Then \cite[Theorem 1.1]{LN2006} yields

\begin{proposition}\label{prop2.4} Assume that $\rho=0$.
Let $p=(p_1, p_2, p_3, p_4)^T$ denote an arbitrary solution of \eqref{dynamics_pi} with $p_i(x,0)\not\equiv 0$. Then for
$0<\la \ll 1$, as $t\rightarrow\infty$, $p_i(x,t)\to 1$ uniformly in $x$.
\end{proposition}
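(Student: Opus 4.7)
The plan is to reduce the problem to a one-locus four-allele diffusion model and invoke Theorem 1.1 of \cite{LN2006} directly. When $\rho=0$, the recombination terms $-\eta_i \rho D$ in \eqref{dynamics_pi_a} vanish, and since the fitnesses are additive, $w_{ij}(x)=s_i(x)+s_j(x)$, the selection term rewrites as
\begin{equation}
	S_i(x,p) = p_i(w_i-\bar w) = p_i\Bigl(s_i(x)-\sum_{j=1}^4 s_j(x)p_j\Bigr),
\end{equation}
which is precisely the standard form of selection in the multiallelic one-locus diffusion model with position-dependent Malthusian parameters $s_i(x)$ treated in \cite{LN2006}.

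First I would check that the hypotheses of Theorem 1.1 of \cite{LN2006} are satisfied in our setting. Assumption (A) ensures that $\alpha,\beta\in C^\gamma(\bar\Omega)$, so by \eqref{s1234} each $s_i\in C^\gamma(\bar\Omega)$, giving the required H\"older regularity. The hypothesis \eqref{bar_si_max}, namely $\bar s_i>\max_{j\in I_i}\bar s_j$, is exactly the generic assumption of Theorem 1.1 in \cite{LN2006} that singles out allele $i$ as having the unique strictly largest spatially averaged Malthusian fitness.

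Next I would address the initial condition. Since $p_i(\cdot,0)\not\equiv 0$, property \eqref{flow_into_interior1} gives $p_i(x,t)>0$ on $\bar\Omega\times(0,\infty)$, so the trajectory avoids the invariant face $\{p_i\equiv 0\}$ on which Theorem 1.1 of \cite{LN2006} cannot conclude fixation of allele $i$. With that in place, the cited theorem provides a $\lambda_0>0$ (depending on the $s_j$'s and on $\Omega$) such that for every $0<\lambda<\lambda_0$, $p_i(x,t)\to 1$ uniformly in $x\in\bar\Omega$ as $t\to\infty$.

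Since the proposition is explicitly flagged as a consequence of Theorem 1.1 of \cite{LN2006}, no genuine obstacle arises beyond matching notations and conventions; the only mildly delicate point is to confirm that the initial-data nondegeneracy in the statement (only $p_i(\cdot,0)\not\equiv 0$, with no hypothesis on the other components) is enough, which is indeed the case because on the timescale of weak selection the diffusion first homogenizes $p_i$ to a strictly positive function while the remaining components may safely decay to zero.
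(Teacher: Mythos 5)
Your proposal matches the paper's treatment: with $\rho=0$ the system is exactly the multiallelic one-locus model of Lou and Nagylaki with Malthusian parameters $s_i(x)$, and the paper likewise states the proposition as a direct consequence of Theorem 1.1 of \cite{LN2006} under the generic hypothesis \eqref{bar_si_max}. Your verification of the hypotheses (H\"older regularity of the $s_i$, the strict maximality of $\bar s_i$, and the nondegeneracy $p_i(\cdot,0)\not\equiv 0$) is exactly what is needed, so the argument is correct and essentially identical to the paper's.
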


\begin{remark}\rm From \eqref{s1234} we see that \eqref{bar_si_max} holds with
\begin{equation}\label{2.13}
i=\begin{cases} 1 &\mbox{if} \;\;\bar{\alpha}>0,~\bar{\beta}>0 \,,\\
	2 &\mbox{if}  \;\; \bar{\alpha}>0,~\bar{\beta}<0\,,\\
	3 &\mbox{if}  \;\; \bar{\alpha}<0,~\bar{\beta}>0\,,\\
	4 &\mbox{if}  \;\; \bar{\alpha}<0,~\bar{\beta}<0\,.
\end{cases}
\end{equation}
\end{remark}

\begin{remark} \label{rm2.5}{\rm
We observe that if neither $\bar{\alpha}$ nor $\bar{\beta}$ is zero (as in the
four cases in \eqref{2.13}), then
$\bar{s}_j\neq \bar{s}_k$ for every $j\neq k$. Therefore, if $\rho=0$, then according to \cite[Remark 1.3]{LN2006},
for sufficiently small $\la$, the vertices are the only equilibria of \eqref{dynamics_pi}.}
\end{remark}

As $\la$ increases, the edge equilibria will appear if (A) holds. The next result determines the stability of each of them immediately after its appearance \cite[Theorem 1.7]{LN2006}; the notation $\la_{ij}$ is as in \eqref{2.6b}.

\begin{proposition}\label{prop:no_rec_general}
Suppose that $\rho=0$, that each of the functions $\alpha(x)$, $\beta(x)$,
$\alpha(x)+\beta(x)$, and $\alpha(x)-\beta(x)$ changes sign, and that assumption
\eqref{bar_si_max} holds for some $i\in I$.

\noindent {\rm (a)} There exists $\delta_1>0$ such that $\hat{p}^{(jk)}$ is linearly
unstable if $j,k\in I_i$, $j<k$, and
$\la_{jk}<\la<\la_{jk}+\delta_1$.

\noindent {\rm (b)} Suppose further that $\la_{ik}<\min_{j\in I_i,j\neq
k}\la_{ij}$ for some
$k\in I_i$. Then there exists $\delta_2>0$ such that $\hat{p}^{(ik)}$ is
linearly stable if $\la_{ik}<\la<\la_{ik}+\delta_2$,
and $\hat{p}^{(il)}$ is linearly unstable if $l\neq k$ and
$\la_{il}<\la<\la_{il}+\delta_2$.
\end{proposition}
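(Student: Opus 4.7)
Because $\rho=0$, system \eqref{dynamics_pi} is a one-locus four-allele diffusion-selection model with location-dependent Malthusian parameters $s_1,\dots,s_4$ given by \eqref{s1234}, so the proposition is a specialization of \cite[Theorem 1.7]{LN2006}. The plan is to reproduce that argument in the concrete four-allele setting. The key observation is that the six pairwise differences $h_{jk}=s_j-s_k$ coincide, up to sign, with $\alpha$, $\beta$, $\alpha\pm\beta$, so the assumption that these four functions change sign is equivalent to every $h_{jk}$ being sign-changing. Thus each $\la_{jk}$ is a well-defined positive number, and by Theorem \ref{thm:singlelocus} the edge equilibrium $\hat p^{(jk)}$ bifurcates transcritically from $M_j$ (if $\bar s_j>\bar s_k$) or from $M_k$ (if $\bar s_k>\bar s_j$) at $\la=\la_{jk}$.

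The linearization of \eqref{dynamics_pi} at $\hat p^{(jk)}$ decouples cleanly in the two missing-gamete directions: because $\hat p_l=0$ for every $l\in I\setminus\{j,k\}$, the Jacobian of $S$ satisfies $\partial_m S_l(\hat p)=0$ for all $m\ne l$, so a perturbation in the direction $\phi_l$ is governed by the scalar eigenvalue problem
\begin{equation*}
	\Delta\phi_l+\la\,q_l(x,\la)\phi_l+\mu\phi_l=0 \;\text{ in }\Omega,\qquad \partial_\nu\phi_l=0 \;\text{ on }\partial\Omega,
\end{equation*}
with weight $q_l(x,\la)=s_l(x)-\theta_{jk}(x,\la)s_j(x)-(1-\theta_{jk}(x,\la))s_k(x)$; the internal (on-edge) direction is strictly stable for $\la>\la_{jk}$ by Theorem \ref{thm:singlelocus}. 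As $\la\downarrow\la_{jk}$, $\theta_{jk}$ converges in $C^2(\bar\Omega)$ to either $0$ or $1$, so $q_l(\cdot,\la)$ converges uniformly to $h_{lj}$ or $h_{lk}$; by continuity of the principal eigenvalue in the weight, its sign just above $\la_{jk}$ is determined by comparing $\la_{jk}$ with $\la_0(h_{lj})$ or $\la_0(h_{lk})$ via Lemma \ref{rm1.4}.

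For part (a), since $j,k\in I_i$, the gamete $i$ indexes one of the two missing-gamete directions. By \eqref{bar_si_max}, $\bar s_i>\bar s_j$ and $\bar s_i>\bar s_k$, so $\la_0(h_{ij})=\la_0(h_{ik})=0$ by \eqref{lambda_0}; therefore the transverse principal eigenvalue in direction $i$ is strictly negative at $\la=\la_{jk}^+$, giving an unstable direction of $\hat p^{(jk)}$ on a right-neighborhood $(\la_{jk},\la_{jk}+\delta_1)$. For part (b), the two missing-gamete directions at $\hat p^{(ik)}$ are indexed by $j\in I_i\setminus\{k\}$; since $\bar s_i>\bar s_k$, the bifurcation is from $M_i$, so $\theta_{ik}\to1$ and $q_j\to -h_{ij}$. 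Using \eqref{lambda_0}, \eqref{lambda_h}, and $\bar s_i>\bar s_j$ one checks $\la_0(-h_{ij})=\la^*(-h_{ij})=\la_{ij}$, so the hypothesis $\la_{ik}<\min_{j\in I_i\setminus\{k\}}\la_{ij}$ forces both transverse principal eigenvalues to be strictly positive at $\la=\la_{ik}^+$, whence $\hat p^{(ik)}$ is linearly stable on $(\la_{ik},\la_{ik}+\delta_2)$. For $\hat p^{(il)}$ with $l\ne k$, the transverse direction indexed by $k$ has limiting weight $-h_{ik}$, and $\la_{ik}<\la_{il}$ yields $\la_{il}>\la_0(-h_{ik})=\la_{ik}$, giving a negative transverse eigenvalue and hence instability. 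The main care needed is tracking the sign convention for the limiting weight ($h_{lj}$ versus $h_{lk}$ depending on which vertex the bifurcation emerges from), but this is routine once one notes that $\la_h=\la_{-h}$ in \eqref{lambda_h}.
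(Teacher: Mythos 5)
The paper does not actually prove this proposition: it is quoted directly from Theorem 1.7 of \cite{LN2006}, so your reconstruction is the only argument on the table. What you wrote is essentially the argument that citation encapsulates, and it matches the machinery the paper itself deploys at $\hat p^{(14)}$ in Section \ref{sec:weak_rec}: your scalar transverse problems with weight $q_l=s_l-\theta_{jk}s_j-(1-\theta_{jk})s_k=h_{lk}-h_{jk}\theta_{jk}$ are exactly \eqref{4.4} and \eqref{4.8}, and your "clean decoupling'' is the content of Lemma \ref{lm4.1} (the row of the Jacobian \eqref{Jacob} indexed by a missing gamete reduces to its diagonal entry because every off-diagonal term carries the factor $p_l=0$). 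The sign bookkeeping in both parts checks out: in (b), $\bar s_i>\bar s_k$ forces the bifurcation from $M_i$, the limiting weight is $-h_{ij}$ with $\la_0(-h_{ij})=\la^*(-h_{ij})=\la_{ij}$ by \eqref{lambda_0} and \eqref{lambda_h}, and the strict inequalities $0<\la_{ik}<\la_{ij}$ (resp.\ $\la_{il}>\la_{ik}=\la_0(-h_{ik})$) let Lemma \ref{rm1.4} plus continuity of the principal eigenvalue deliver strict signs that survive the perturbation from the limiting weight to $q_j(\cdot,\la)$.

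The one place your argument as written does not close is a degenerate subcase of part (a). If $j,k\in I_i$ with $\bar s_j=\bar s_k$ — which the hypotheses permit, but only for the complementary pair $\{j,k\}=\tilde I_i$ of \eqref{3.2} (e.g.\ $i=1$, $\{j,k\}=\{2,3\}$, $\bar\alpha=\bar\beta$) — then $\bar h_{jk}=0$, $\la_{jk}=0$, and the edge cline exists for all $\la>0$. In that case $\la_{jk}=\la_0(h_{ij})=\la_0(h_{ik})=0$, so Lemma \ref{rm1.4} says nothing at $\tilde\la=\la_{jk}$ and continuity only yields $\mu_1\to 0$ as $\la\downarrow 0$, not a sign. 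You need a first-order expansion instead: $\theta_{jk}\to\tfrac12$ as $\la\to 0^+$ when $\bar h_{jk}=0$, so $\bar q_i\to\bar s_i-\tfrac12(\bar s_j+\bar s_k)>0$ by \eqref{bar_si_max}, and the principal eigenvalue of \eqref{1.7} satisfies $\mu_1=-\la\,\bar q_i+o(\la)<0$ for small $\la>0$ (test with the constant function, or differentiate the principal eigenvalue at $\la=0$). With that patch — or with the additional generic assumption that all $\bar s_j$ are distinct — the proof is complete. Everything else, including the observation $\la_h=\la_{-h}$ and the restriction to the strict inequalities in (b) where no such degeneracy can occur (since $\bar s_i>\bar s_k$ strictly), is correct.
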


\begin{remark}\label{rem:edge_equil}\rm
Suppose that $i$ is the gamete with the highest spatially averaged fitness. Under the assumption in Proposition~\ref{prop:no_rec_general}(b), we infer from \eqref{3.7a} that $\la_{i}^*(\rho)=\la_{ik}$. Then Theorem~\ref{thm:monos_stability}(b) shows that $M_i$ is linearly stable if $0<\la<\la_{ik}$ and unstable if $\la>\la_{ik}$.
Proposition~\ref{prop:no_rec_general}(b) informs us that as $\la$ increases from $0$, $\hat p^{(ik)}$ is the first one that moves into the state space among the edge equilibria that bifurcate through $M_i$, and initially it is linearly stable (by exchange of stability with $M_i$). All other edge equilibria that may move into the state space will be unstable immediately after their appearance.
\end{remark}

If there exists $x_i\in \Omega$ for $i=1,2,3,4$ such that
\begin{equation}\label{2.14}
\alpha(x_1)\,,\, \beta(x_1)>0\,;\;\alpha(x_2)>0\,,\, \beta(x_2)<0\,;\;
\alpha(x_3)<0\,,\, \beta(x_3)>0\,;\;\alpha(x_4)\,,\, \beta(x_4)<0\,,
\end{equation}
then Corollary 4.10 in \cite{LN2004} guarantees the existence of an internal
equilibrium for $\la\gg 1$.

\begin{proposition}\label{prop2.7}
Suppose that $\rho=0$ and that \eqref{2.14} holds. Then for $\la
\gg 1$, system \eqref{dynamics_pi} has at least one equilibrium
$\hat{p}=(\hat{p}_1, \hat{p}_2, \hat{p}_3, \hat{p}_4)^T$ such that
$\hat{p}_i(x)>0$ in $\Omega$ for every $i$.
\end{proposition}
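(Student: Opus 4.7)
The plan is to reduce \eqref{dynamics_pi} with $\rh=0$ to a one-locus four-allele system and then invoke the persistence-based existence machinery of \cite[Cor.~4.10]{LN2004}, exactly as indicated in the statement. As noted at the start of Section~\ref{sec:no_rec}, when $\rh=0$ each gamete $i\in I$ may be identified with an allele at a single locus whose spatially varying Malthusian parameter is $s_i(x)$; system \eqref{dynamics_pi} then coincides with the four-allele diffusion--selection model treated in \cite{LN2004}.

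The central step is to verify the hypothesis that every allele is strictly the fittest somewhere, i.e.\ for each $i\in I$ there exists $x_i\in\Om$ with
\begin{equation*}
	s_i(x_i) \;>\; \max_{j\in I_i} s_j(x_i).
\end{equation*}
This follows by direct substitution of \eqref{2.14} into \eqref{s1234}. For example, at $x_1$ where $\a(x_1),\be(x_1)>0$ one computes
\begin{equation*}
	s_1(x_1)-s_2(x_1) = \be(x_1) > 0,\quad s_1(x_1)-s_3(x_1) = \a(x_1) > 0,\quad s_1(x_1)-s_4(x_1) = \a(x_1)+\be(x_1) > 0,
\end{equation*}
so $s_1$ strictly dominates at $x_1$; the analogous inequalities at $x_2,x_3,x_4$ are immediate from the sign patterns prescribed by \eqref{2.14}. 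In particular, assumption (A) also ensures that each $s_i-s_j$ is sign-changing, which covers the auxiliary nondegeneracy hypotheses needed by \cite{LN2004}.

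With the hypothesis in hand, \cite[Cor.~4.10]{LN2004} applied to the reduced four-allele system yields, for $\la\gg1$, uniform persistence of all four alleles together with the existence of an equilibrium $\hat p=(\hat p_1,\hat p_2,\hat p_3,\hat p_4)^T$ satisfying $\hat p_i(x)>0$ in $\Om$ for every $i\in I$; this is the sought internal equilibrium. The main---and essentially the only---obstacle is the translation of the sign hypothesis \eqref{2.14} into the pointwise strict-dominance condition required by \cite[Cor.~4.10]{LN2004}; once that is established, both the uniform persistence estimate and the resulting interior equilibrium are supplied directly by the Hale--Waltman-type dissipativity plus fixed-point argument built into that corollary, and no further PDE work is needed.
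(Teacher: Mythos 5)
Your proposal is correct and follows exactly the route the paper takes: the paper offers no proof beyond the sentence preceding the proposition, which observes that under \eqref{2.14} Corollary 4.10 of \cite{LN2004} applies to the reduced one-locus four-allele system, and your pointwise computation from \eqref{s1234} showing that gamete $i$ is strictly the fittest at $x_i$ is precisely the verification being left implicit there. (One tiny quibble: the sign changes of $s_i-s_j$, to the extent they are needed at all, follow from \eqref{2.14} itself rather than from assumption (A), since (A) alone does not force $\alpha\pm\beta$ to change sign.)
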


\section{Weak recombination}\label{sec:weak_rec}
Here, we study \eqref{dynamics_dsr_pi} for weak recombination, i.e., $d$ and $s$ are fixed and $0<r\ll1$. This is equivalent to studying \eqref{dynamics_pi} with $\la>0$ fixed and $0<\rh\ll1$, which we use henceforth.
From Section \ref{Section_boundary}, we already know that the four single-locus polymorphisms
$\hat{p}^{(12)}$, $\hat{p}^{(34)}$, $\hat{p}^{(13)}$, and $\hat{p}^{(24)}$, defined by \eqref{2.4} and \eqref{2.5}, exist in pairs and neither their values nor their existence depends on $\rh$. This is different for the edge equilibria $\hat{p}^{(14)}$ and $\hat{p}^{(23)}$, which can exist only if $\rh=0$. Suppose that $\hat{p}^{(14)}$ (or $\hat{p}^{(23)}$) exists when $\rh=0$. If we increase $\rho$ from $0$ slightly, will $\hat{p}^{(14)}$ ($\hat{p}^{(23)}$) move into the interior of the state space $\mathbf{X}$ and therefore become full polymorphisms? The investigation of this problem is the main purpose of this section. Throughout, we suppose assumption (A). Our main result is the following.

\begin{theorem}\label{thm:weak_reco}
\noindent {\rm (a)} If for $\rho=0$ the edge equilibrium $\hat{p}^{(14)}$ $(\hat{p}^{(23)})$ exists and is linearly stable, then for every sufficiently small $\rho>0$, problem \eqref{dynamics_pi} has an internal equilibrium $\hat{p}^{(\rho)}$ that is linearly stable, and $\hat{p}^{(\rho)}(x)\to \hat{p}^{(14)}(x)$ $(\hat{p}^{(23)}(x))$ uniformly as $\rho\to 0+$.

\noindent {\rm (b)} Assume that each of $\alpha(x)$, $\beta(x)$, $\alpha(x)+\beta(x)$, and $\alpha(x)-\beta(x)$ changes sign, \eqref{bar_si_max} holds for $i=1$, and
$\la_{14}<\min\{\la_{12},\la_{13}\}$. Then there exists $\delta>0$ such that
for every $\la\in (\la_{14}, \la_{14}+\delta)$ and every
sufficiently small $\rho>0$, problem \eqref{dynamics_pi} has an internal equilibrium $\hat{p}^{(\rho)}$, which is linearly stable. Moreover, for every fixed $\la\in (\la_{14}, \la_{14}+\delta)$, we have $\hat{p}^{(\rho)}(x)\to \hat{p}^{(14)}(x)$ uniformly as $\rho\to 0+$.
\end{theorem}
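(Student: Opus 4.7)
The plan is to prove part (a) by a regular perturbation argument based on the implicit function theorem, and then deduce part (b) from part (a) combined with Proposition~\ref{prop:no_rec_general}(b). Below I focus on $\hat p^{(14)}$; the argument for $\hat p^{(23)}$ is completely analogous.

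Set up the equilibrium operator $F(p,\rho) = \Delta p + \lambda S(x,p) - \rho\,\eta\, D(p)$, acting on $C^{2,\gamma}$ functions with Neumann boundary conditions and constrained to the affine subspace $\{\sum_i p_i \equiv 1\}$. Then $F(\hat p^{(14)},0)=0$. The Fr\'echet derivative $L := D_p F(\hat p^{(14)},0)$, restricted to the tangent space $\{\sum_i \phi_i \equiv 0\}$, is the linearization of \eqref{dynamics_pi} at $\hat p^{(14)}$ with $\rho=0$. The hypothesis that $\hat p^{(14)}$ is linearly stable, together with the eigenvalue formulation \eqref{1.4}, guarantees that $L$ is invertible. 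The implicit function theorem then yields a unique $C^1$ branch $\rho \mapsto \hat p^{(\rho)}$ of equilibria of \eqref{dynamics_pi} for $\rho$ in a small one-sided neighborhood of $0$, with $\hat p^{(\rho)} \to \hat p^{(14)}$ in $C^2(\bar\Omega)$, hence uniformly, as $\rho \to 0^+$. Linear stability of $\hat p^{(\rho)}$ for small $\rho > 0$ then follows from continuous dependence of the spectrum of $D_p F(\hat p^{(\rho)},\rho)$ on $\rho$, starting from strict stability at $\rho=0$.

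The remaining point is that $\hat p^{(\rho)}$ lies in the interior of $\mathbf X$ for small $\rho>0$. Expanding $\hat p^{(\rho)} = \hat p^{(14)} + \rho\, q + o(\rho)$, the first-order term $q$ solves $L q = \eta\, D(\hat p^{(14)})$. Because $\hat p^{(14)}_2 \equiv \hat p^{(14)}_3 \equiv 0$, a direct inspection of the Jacobian \eqref{Jacob} at $(\hat p^{(14)},0)$ shows that the $q_2$ and $q_3$ components satisfy decoupled scalar Neumann problems
\begin{equation*}
-\Delta q_i - \lambda c_i(x)\, q_i = D(\hat p^{(14)}),\qquad i=2,3,
\end{equation*}
with $c_2(x)=\alpha(x)(1-\theta_{14})-\beta(x)\theta_{14}$, $c_3(x)=-\alpha(x)\theta_{14}+\beta(x)(1-\theta_{14})$, and $D(\hat p^{(14)})=\theta_{14}(1-\theta_{14})>0$ on $\bar\Omega$ (using $\eta_2=\eta_3=-1$). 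Linear stability of $\hat p^{(14)}$ in the transverse directions forces the principal eigenvalue of $\Delta+\lambda c_i(x)$ with Neumann boundary conditions to be strictly negative for $i=2,3$; equivalently, $-\Delta-\lambda c_i(x)$ has a strictly positive principal eigenvalue, so by the Krein--Rutman theorem and the strong maximum principle its resolvent is positivity preserving. Hence $q_2, q_3 > 0$ on $\bar\Omega$, so $\hat p^{(\rho)}_2, \hat p^{(\rho)}_3 > 0$ for small $\rho>0$, while $\hat p^{(\rho)}_1$ and $\hat p^{(\rho)}_4$ remain positive by continuity.

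For part (b), the stated assumptions match those of Proposition~\ref{prop:no_rec_general}(b) with $i=1$ and $k=4$, which supplies $\delta_2>0$ such that $\hat p^{(14)}$ exists and is linearly stable for every $\lambda \in (\lambda_{14}, \lambda_{14}+\delta_2)$. Taking $\delta=\delta_2$ and applying part (a) at each such $\lambda$ produces the desired internal linearly stable equilibrium and the convergence $\hat p^{(\rho)} \to \hat p^{(14)}$ as $\rho \to 0^+$; no uniformity of the smallness of $\rho$ in $\lambda$ is required, since the theorem permits $\rho$ to depend on $\lambda$. The main obstacle I anticipate is the positivity step for $q_2, q_3$: although the outline is transparent, cleanly isolating the decoupled transverse operators from the block structure of $J|_{\hat p^{(14)}}$ and translating linear stability of $\hat p^{(14)}$ in the full system (and not merely within its invariant one-locus edge) into strict negativity of the transverse principal eigenvalues requires careful use of the spectral framework developed in Section~\ref{sec:ev-problems}.
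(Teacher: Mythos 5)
Your proposal is correct and follows essentially the same route as the paper: the implicit function theorem (justified by linear stability, hence invertibility of the linearization) gives the branch $\hat p^{(\rho)}$ and its linear stability by spectral continuity, the positivity of the first-order correction terms $q_2,q_3$ is obtained from exactly the decoupled transverse Neumann problems you wrote down (your $c_2,c_3$ agree with the paper's $h_{24}-h_{14}\theta_{14}$ and $h_{34}-h_{14}\theta_{14}$) via inverse positivity, and part (b) is deduced from Proposition~\ref{prop:no_rec_general}(b) together with part (a). The step you flag as the main obstacle is handled in the paper by Lemma~\ref{lm4.1}, which decomposes the spectrum of the full linearization at $\hat p^{(14)}$ into the within-edge problem and the two scalar transverse problems, precisely as your outline requires.
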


\begin{remark}\rm
1. Note that the assumption \eqref{bar_si_max} for $i=1$ can be imposed without loss of generality upon relabeling of gametes.

\noindent {\rm 2.} Recall from Proposition \ref{prop:no_rec_general} and Remark \ref{rem:edge_equil} that for $\rh=0$, $\la_{14}$ is the critical eigenvalue at which $\hat{p}^{(14)}$ appears by an exchange-of-stability bifurcation with $M_1$ as $\la$ increases above $\la_{14}$. Moreover, $\la_{14}<\min\{\la_{12},\la_{13}\}$ implies that $\hat{p}^{(14)}$ appears before the two pairs of edge equilibria ($\hat{p}^{(12)}$ and $\hat{p}^{(34)}$, $\hat{p}^{(13)}$ and $\hat{p}^{(24)}$) as $\la$ increases from 0.
\end{remark}

To prove Theorem \ref{thm:weak_reco}, we need some preparations.
Recalling \eqref{1.4}, \eqref{Jacob}, \eqref{1.20}, \eqref{2.4}, \eqref{2.5}, and using $\sum_{i=1}^{4}\phi_i=0$, the linearized problem of \eqref{dynamics_pi} with $\rho=0$ at $\hat{p}^{(14)}(x)$ reads
\begin{subequations}\label{4.1}
\begin{alignat}{2}
	&\De\phi_1+\la h_{14}(1-2\theta_{14})\phi_1 -\la\theta_{14}[h_{24}\phi_2+h_{34}\phi_3]+\mu \phi_1=0
			&\quad&\text{in } \Omega\,, \label{4.1a} \\
	&\De\phi_2+\la (h_{24}-h_{14}\theta_{14})\phi_{2} +\mu\phi_2=0 &\quad&\text{in } \Omega\,, \label{4.1b} \\
	&\De\phi_3+\la (h_{34}-h_{14}\theta_{14})\phi_{3} +\mu\phi_3=0 &\quad&\text{in } \Omega\,, \label{4.1c} \\
	&\partial_\nu \phi_i = 0\,, \quad i=1,2,3, &\quad&\text{on } \partial\Omega\,. \label{4.1d}
\end{alignat}
\end{subequations}
There are three single-equation linearized problems related to \eqref{4.1}:
\begin{equation}\label{4.5}
	\De\phi^{(1)}+\la h_{14}(1-2\theta_{14})\phi^{(1)} +\mu^{(1)}\phi^{(1)}=0\quad
	\mbox{in $\Omega$\,,} \quad \partial_\nu \phi^{(1)}=0 \quad \mbox{on~} \partial\Omega\,.
\end{equation}
\begin{equation}\label{4.4}
	\De\phi^{(2)}+\la (h_{24}-h_{14}\theta_{14})\phi^{(2)} +\mu^{(2)}\phi^{(2)}=0\quad
	\mbox{in $\Omega$\,,} \quad \partial_\nu \phi^{(2)}=0 \quad \mbox{on~} \partial\Omega\,.
\end{equation}
\begin{equation}\label{4.8}
	\De\phi^{(3)}+\la (h_{34}-h_{14}\theta_{14})\phi^{(3)} +\mu^{(3)}\phi^{(3)}=0\quad
	\mbox{in $\Omega$\,,} \quad \partial_\nu \phi^{(3)}=0 \quad \mbox{on~} \partial\Omega\,.
\end{equation}
We denote the set of eigenvalues of \eqref{4.1}, \eqref{4.5}, \eqref{4.4}, and \eqref{4.8} by
$E$, $E^{(1)}$, $E^{(2)}$, and $E^{(3)}$, respectively.

\begin{lemma}\label{lm4.1}
The set of eigenvalues of problem \eqref{4.1} consists of the eigenvalues of problems
\eqref{4.5}, \eqref{4.4}, and \eqref{4.8}, namely, $\displaystyle E=\bigcup_{i=1}^{3} E^{(i)}$.
\end{lemma}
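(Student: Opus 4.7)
The plan is to mirror the argument used in Lemma \ref{lm3.1}, exploiting the block-triangular structure of the linearized system \eqref{4.1}: equations \eqref{4.1b} and \eqref{4.1c} for $\phi_2$ and $\phi_3$ are decoupled from one another and from $\phi_1$, while \eqref{4.1a} determines $\phi_1$ driven by $\phi_2$ and $\phi_3$ through the zeroth-order coupling $-\la\theta_{14}[h_{24}\phi_2+h_{34}\phi_3]$. I would therefore prove the two inclusions $\bigcup_{i=1}^{3}E^{(i)}\subseteq E$ and $E\subseteq \bigcup_{i=1}^{3}E^{(i)}$ separately.

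For the inclusion $\bigcup_{i=1}^{3}E^{(i)}\subseteq E$, I would treat each $E^{(i)}$ in turn. If $\mu^{(1)}\in E^{(1)}$ with eigenfunction $\phi^{(1)}$, the triple $(\phi^{(1)},0,0)$ is visibly an eigenfunction of \eqref{4.1} with eigenvalue $\mu^{(1)}$. If $\mu^{(2)}\in E^{(2)}$ with eigenfunction $\phi^{(2)}$, I consider two sub-cases exactly as in Lemma \ref{lm3.1}: either $\mu^{(2)}\in E^{(1)}$, in which case we already know $\mu^{(2)}\in E$; or $\mu^{(2)}\notin E^{(1)}$, in which case the operator $L_1:=\Delta+\la h_{14}(1-2\theta_{14})+\mu^{(2)}$ is invertible with Neumann boundary condition, and I can set
\begin{equation*}
\phi_1=L_1^{-1}\bigl[\la\theta_{14}h_{24}\phi^{(2)}\bigr],\qquad \phi_2=\phi^{(2)},\qquad \phi_3=0,
\end{equation*}
which by construction solves \eqref{4.1} with eigenvalue $\mu^{(2)}$. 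The case $\mu^{(3)}\in E^{(3)}$ is handled symmetrically, with the forcing term $\la\theta_{14}h_{34}\phi^{(3)}$ and $\phi_2=0$, $\phi_3=\phi^{(3)}$.

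For the reverse inclusion, suppose $\mu\in E$ with eigenfunction $(\phi_1,\phi_2,\phi_3)$, not identically zero. From \eqref{4.1b}, if $\phi_2\not\equiv 0$ then $\mu\in E^{(2)}$; from \eqref{4.1c}, if $\phi_3\not\equiv 0$ then $\mu\in E^{(3)}$. If instead $\phi_2\equiv\phi_3\equiv 0$, then $\phi_1\not\equiv 0$ (else the eigenfunction is trivial), and \eqref{4.1a} reduces to the single equation \eqref{4.5}, so $\mu\in E^{(1)}$. In every case $\mu\in\bigcup_{i=1}^{3}E^{(i)}$.

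I do not expect a serious obstacle: the only nontrivial point is the invertibility step in the $\mu^{(2)}\notin E^{(1)}$ (resp.\ $\mu^{(3)}\notin E^{(1)}$) sub-case, which is automatic by the Fredholm alternative because $\mu^{(2)}$ (resp.\ $\mu^{(3)}$) lies outside the spectrum of the self-adjoint Neumann operator on the left-hand side of \eqref{4.5}. The entire argument is a direct transcription of the proof of Lemma \ref{lm3.1}, adapted to the present $3\times 3$ triangular system instead of the earlier $2\times 2$ one.
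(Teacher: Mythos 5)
Your proposal is correct and follows essentially the same argument as the paper: both directions are handled by exploiting the triangular structure, with the sub-case split on whether $\mu^{(2)}$ (resp.\ $\mu^{(3)}$) already lies in $E^{(1)}$ and otherwise inverting the scalar operator from \eqref{4.5} to recover $\phi_1$. The only difference is that you write out the formula $\phi_1=L_1^{-1}[\la\theta_{14}h_{24}\phi^{(2)}]$ and justify invertibility via the Fredholm alternative explicitly, where the paper simply says ``solving $\phi_1$ from \eqref{4.1a}''.
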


\begin{proof}
First, we show that $\displaystyle E \supseteq \bigcup_{i=1}^{3} E^{(i)}$. Suppose $\mu^{(1)}\in E^{(1)}$
 with an eigenfunction $\phi^{(1)}$, then it is clear that $\mu^{(1)}$ solves \eqref{4.1} with
 $\phi_1=\phi^{(1)}$, $\phi_2=0$, and $\phi_3=0$, and therefore $\mu^{(1)}\in E$.
If $\mu^{(2)}\in E^{(2)} \setminus E^{(1)}$ with an eigenfunction $\phi^{(2)}$, we see that it is also an eigenvalue
of \eqref{4.1} by taking $\phi_2=\phi^{(2)}$, $\phi_3=0$, and solving $\phi_1$ from
\eqref{4.1a}. Similarly, if $\mu^{(3)}\in E^{(3)}\setminus E^{(1)}$ with an eigenfunction $\phi^{(3)}$, we see that it is also an eigenvalue
of \eqref{4.1} by taking $\phi_2=0$, $\phi_3=\phi^{(3)}$, and solving $\phi_1$ from
\eqref{4.1a}.

Second, we demonstrate the converse $\displaystyle E \subseteq \bigcup_{i=1}^{3} E^{(i)}$. If $\mu$ is an eigenvalue
of \eqref{4.3} with $\phi_2=\phi_3=0$, then $\phi_1\neq 0$ and therefore $\mu$
is an eigenvalue of \eqref{4.5}; otherwise, if $\phi_2\neq 0$ or $\phi_3\neq 0$, then
$\mu$ is an eigenvalue of \eqref{4.4} or \eqref{4.8}, respectively.

Thus, the set of eigenvalues of \eqref{4.3}
consists the eigenvalues of \eqref{4.5}, \eqref{4.4}, and \eqref{4.8}.
\end{proof}

\begin{proof}[Proof of Theorem \ref{thm:weak_reco}]
(a) We present the proof only for $\hat{p}^{(14)}$; for $\hat{p}^{(23)}$ it is similar.

By the asumption that $\hat{p}^{(14)}$ is linearly stable when $\rho=0$,
every $\mu$ that satisfies \eqref{4.1} has a positive real part unless $\phi_i\equiv 0$ for $i=1,2,3$. Therefore, by the implicit
function theorem, there exists a family of equilibria $\hat{p}^{(\rho)}$ for $\rho>0$ sufficiently small and
$\hat{p}^{(\rho)}(x)\to \hat{p}^{(14)}(x)$ uniformly as $\rho\to 0+$. From \eqref{1.4} and \eqref{Jacob} we
infer that the linearization of \eqref{dynamics_pi} at $\hat{p}^{(\rho)}$ is a small continuous perturbation of \eqref{4.1} for which every eigenvalue also has a positive real part, whence $\hat{p}^{(\rho)}$ is linearly stable.

Next, we show that $\hat{p}^{(\rho)}$ is in the interior of $\mathbf{X}$. By the fact that $\hat{p}^{(14)}_1(x)>0$ and
$\hat{p}^{(14)}_4(x)>0$ in $\bar{\Omega}$ and the uniform continuity of $\hat{p}^{(\rho)}(x)$ with respect to $\rho$, we obtain that $\hat{p}^{(\rho)}_1(x)>0$ and $\hat{p}^{(\rho)}_4(x)>0$
in $\bar{\Omega}$ for sufficiently small $\rho>0$.

To see that $\hat{p}^{(\rho)}_2(x)>0$ and $\hat{p}^{(\rho)}_3(x)>0$ in $\bar{\Omega}$ for sufficiently small $\rho>0$, we consider
\begin{equation}\label{4.2}
	u(x)=(u_1(x),u_2(x),u_3(x)):=
	\left(\frac{\partial \hat{p}^{(\rho)}_1}{\partial \rho}(x), \frac{\partial \hat{p}^{(\rho)}_2}{\partial \rho}(x),
	\frac{\partial \hat{p}^{(\rho)}_3}{\partial \rho}(x)\right)  \biggl|_{\rho=0}\,.
\end{equation}
Differentiating the equilibrium problem that $\hat{p}^{(\rho)}$ satisfies with respect to $\rho$ and then substituting
$\rho=0$, we obtain
\begin{subequations}\label{4.3}
\begin{align}
	&\De u_1+\la h_{14}(1-2\theta_{14})u_1 -\la\theta_{14}[h_{24}u_2+h_{34}u_3]-\theta_{14}(1-\theta_{14})=0
		&\quad&\text{in } \Omega\,, \label{4.3a}\\
	&\De u_2+\la (h_{24}-h_{14}\theta_{14})u_{2} +\theta_{14}(1-\theta_{14})=0
		&\quad&\text{in } \Omega\,, \label{4.3b}\\
	&\De u_3+\la (h_{34}-h_{14}\theta_{14})u_{3} +\theta_{14}(1-\theta_{14})=0
			&\quad&\text{in } \Omega\,, \label{4.3c} \\
	&\partial_\nu u_i=0\,, \quad i=1,2,3, &\quad&\text{on } \partial\Omega\,. \label{4.3d}
\end{align}
\end{subequations}

By our assumption that every eigenvalue $\mu$ of \eqref{4.1}
has positive real part, we infer from Lemma~\ref{lm4.1} that the smallest eigenvalue $\mu_{1}^{(2)}$ of \eqref{4.4} is positive.
By an inverse positivity result, from \eqref{4.3b} and the facts $\mu_{1}^{(2)}>0$ and $\theta_{14}(1-\theta_{14})>0$ we conclude that $u_2(x)>0$ in $\bar{\Omega}$.
(For the inverse positivity result, see e.g. Theorem 7.3 in \cite{Hess91}, in which we take
\begin{equation}\label{K}
	K=\left[-\De-\la (h_{24}-h_{14}\theta_{14})+c\right]^{-1}
\end{equation}
for some constant $c>0$ such that $-\la (h_{24}-h_{14}\theta_{14})+c>0$ in $\bar{\Omega}$, and associate it with zero Neumann boundary condition. Then
\begin{equation}\label{sprK}
	\mbox{spr}(K)=1/(\mu_{1}^{(2)}+c)\,,
\end{equation} 
and \eqref{4.3b} is equivalent to
\begin{equation}\label{4.7}
	\frac{1}{c} u_2-Ku_2 =  \frac{1}{c} K[\theta_{14}(1-\theta_{14})]\,.
\end{equation}
By standard elliptic regularity, embedding theory, and the strong maximum principle, $K$ is compact and strongly positive on $C^{1+\gamma}(\bar\Omega)$ for some $\gamma\in (0,1)$.  Moreover, $\mu_{1}^{(2)}>0$ and \eqref{sprK} imply that $1/c>\mbox{spr}(K)$, whence the positivity of the right-hand side of \eqref{4.7} leads to $u_2(x)>0$ in $\bar\Omega$.)

Similarly, we have
$\mu_{1}^{(3)}>0$ and $u_3(x)>0$ in $\bar{\Omega}$ as above.
Hence, we deduce from $u_i(x)>0$ in $\bar{\Omega}$ for
$i=2,3$ and \eqref{4.2} that $\hat{p}^{(\rho)}_2(x)>0$ and
$\hat{p}^{(\rho)}_3(x)>0$ for sufficiently small $\rho>0$. Thus, we have proved that
$\hat{p}^{(\rho)}$ is a full polymorphism for sufficiently small $\rho>0$, and this completes
the proof of (a).

Part (b) follows directly from Proposition \ref{prop:no_rec_general} and part (a).
\end{proof}

\begin{remark}\rm
Theorem \ref{thm:weak_reco} shows that a linearly stable equilibrium at either the 14-edge or the 23-edge moves into the interior of the state space if $\rh>0$. The following result shows that if such an equilibrium is unstable for $\rho=0 $, it leaves the state space when $\rho>0$.
\end{remark}

\begin{proposition}\label{prop:weak_reco}
 Suppose that for $\rho=0$ the edge equilibrium $\hat{p}^{(14)}$ $(\hat{p}^{(23)})$ exists and is nondegenerate  and linearly unstable.
Then there exists a neighbourhood in $\mathbf{X}$ of $\hat{p}^{(14)}$ $(\hat{p}^{(23)})$ in which there is no equilibrium of \eqref{dynamics_pi} for sufficiently small $\rho>0$. However, there is a family of stationary states $\hat{p}^{(\rho)}\notin\mathbf{X}$ such that $\hat{p}^{(\rho)}(x)\to \hat{p}^{(14)}(x)$ $(\hat{p}^{(23)}(x))$ uniformly as $\rho\to 0+$.
\end{proposition}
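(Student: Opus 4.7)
The plan is to apply the implicit function theorem to the equilibrium equation associated with \eqref{dynamics_pi} in an appropriate Banach space setting (classical $C^{2,\ga}(\bar\Omega)^4$ solutions subject to Neumann boundary conditions and the affine constraint $\sum_i p_i \equiv 1$), around the pair $(\hat{p}^{(14)}, 0)$. Nondegeneracy of $\hat{p}^{(14)}$ is precisely the statement that the Fréchet derivative in $p$ is invertible on the tangent subspace $\{\phi : \sum_i \phi_i \equiv 0\}$, so the IFT yields a smooth family of stationary solutions $\hat{p}^{(\rho)}$ for $|\rho|$ small, with $\hat{p}^{(\rho)} \to \hat{p}^{(14)}$ in $C^{2,\ga}$. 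Combined with standard elliptic regularity, which upgrades any $C(\bar\Omega)$-close equilibrium to a $C^{2,\ga}$-close one, the local uniqueness clause of the IFT implies that $\hat{p}^{(\rho)}$ is the only stationary solution in some sup-norm neighbourhood of $\hat{p}^{(14)}$. Hence the first assertion reduces to verifying $\hat{p}^{(\rho)} \notin \mathbf{X}$ for small $\rho > 0$.

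To this end I would analyse the derivative $u := \partial_\rho \hat{p}^{(\rho)}|_{\rho = 0}$, whose components $(u_1, u_2, u_3)$ satisfy the inhomogeneous linear system \eqref{4.3}. Lemma \ref{lm4.1} splits the spectrum of the linearization at $\hat{p}^{(14)}$ as $E = E^{(1)} \cup E^{(2)} \cup E^{(3)}$. Because $\theta_{14}$ is the linearly stable one-locus cline on its invariant edge (Theorem \ref{thm:singlelocus}), the smallest eigenvalue $\mu_1^{(1)}$ of \eqref{4.5} is strictly positive. Nondegeneracy rules out $0 \in E^{(2)} \cup E^{(3)}$, and linear instability of $\hat{p}^{(14)}$ then forces $\mu_1^{(k)} < 0$ for some $k \in \{2,3\}$. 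By the symmetry in the roles of gametes $2$ and $3$, I may assume $\mu_1^{(2)} < 0$, and let $\varphi_1 > 0$ denote its principal eigenfunction.

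Testing \eqref{4.3b} against $\varphi_1$, integrating over $\Omega$, and using self-adjointness of $\De + \la(h_{24} - h_{14}\theta_{14})$ under Neumann boundary conditions yields
\begin{equation*}
\mu_1^{(2)} \int_\Omega u_2 \varphi_1 \, dx \;=\; \int_\Omega \theta_{14}(1-\theta_{14})\varphi_1 \, dx \;>\; 0.
\end{equation*}
Since $\mu_1^{(2)} < 0$, this forces $\int_\Omega u_2 \varphi_1 \, dx < 0$, hence $u_2(x_*) < 0$ at some $x_* \in \bar\Omega$. The expansion $\hat{p}^{(\rho)}_2(x_*) = \rho\, u_2(x_*) + o(\rho)$ then gives $\hat{p}^{(\rho)}_2(x_*) < 0$ for every sufficiently small $\rho > 0$, so $\hat{p}^{(\rho)} \notin \mathbf{X}$. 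The argument for $\hat{p}^{(23)}$ is entirely parallel, with the roles of the edge gametes $\{1,4\}$ and $\{2,3\}$ exchanged throughout.

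The main obstacle I anticipate is the bookkeeping needed to set up the IFT under the constraint $\sum_i p_i \equiv 1$, which reduces the effective state space from four to three dimensions and requires restricting both the nonlinear map and its linearisation consistently; once this is done, the invertibility hypothesis coincides exactly with nondegeneracy. The sign argument producing $u_2(x_*) < 0$ is essentially an instance of the anti-maximum principle, recovered here through a single eigenfunction identity that exploits the self-adjoint structure of \eqref{4.3b}.
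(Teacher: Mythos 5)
Your argument is correct, and its overall architecture coincides with the paper's: the implicit function theorem at the nondegenerate equilibrium produces the unique local family $\hat{p}^{(\rho)}$, the spectral decomposition of Lemma \ref{lm4.1} combined with the stability of $\theta_{14}$ within its edge reduces instability to $\mu_1^{(2)}<0$ or $\mu_1^{(3)}<0$, and the conclusion rests on showing that the corresponding component of $u=\partial_\rho\hat p^{(\rho)}|_{\rho=0}$, which solves \eqref{4.3}, must take a negative value. Where you genuinely diverge is in the mechanism for that last sign statement. The paper reuses the inverse-positivity machinery from the proof of Theorem \ref{thm:weak_reco}(a): it recasts \eqref{4.3b} as the resolvent equation \eqref{4.7} for the strongly positive compact operator $K$, notes that $\mu_1^{(2)}<0$ forces $1/c<\mathrm{spr}(K)$ by \eqref{sprK}, and invokes Hess's Theorem 7.3 to conclude that \eqref{4.7} with positive right-hand side admits no nonnegative solution. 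You instead pair \eqref{4.3b} with the principal eigenfunction $\varphi_1>0$ of \eqref{4.4} and use self-adjointness under Neumann conditions to get
\begin{equation*}
\mu_1^{(2)}\int_\Omega u_2\,\varphi_1\,dx=\int_\Omega \theta_{14}(1-\theta_{14})\varphi_1\,dx>0\,,
\end{equation*}
so $\int_\Omega u_2\varphi_1\,dx<0$ and hence $u_2(x_*)<0$ at some point. This is a more elementary and self-contained route: it avoids the Krein--Rutman apparatus entirely and delivers directly the pointwise negativity needed to conclude $\hat p^{(\rho)}_2(x_*)<0$ for small $\rho>0$ (whereas the paper's phrasing ``$u_2$ cannot be a positive function'' requires the reader to unwind Hess's theorem to the same effect). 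The trade-off is that your identity exploits the self-adjointness of the scalar operator in \eqref{4.3b}, which holds here because migration is modeled by the Laplacian with Neumann conditions; the paper's operator-theoretic argument would survive a non-self-adjoint elliptic migration operator. Both proofs correctly use nondegeneracy twice: once for the IFT (and the local uniqueness that turns ``$\hat p^{(\rho)}\notin\mathbf X$'' into ``no equilibrium of \eqref{dynamics_pi} near $\hat p^{(14)}$ in $\mathbf X$''), and once to guarantee $0\notin E^{(2)}\cup E^{(3)}$ so that \eqref{4.3b} is solvable.
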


\begin{proof}
We prove this proposition for $\hat p^{(14)}$; the proof for $\hat p^{(23)}$ is analogous. Because we assume that $\hat p^{(14)}$ is nondegenerate, by the implicit
function theorem, there exists a {\it unique} family of equilibria $\hat{p}^{(\rho)}$ of \eqref{dynamics_pi} for $\rho>0$ sufficiently small such that
$\hat{p}^{(\rho)}(x)\to \hat{p}^{(14)}(x)$ uniformly as $\rho\to 0+$.

From Section~\ref{sec:existence_boundary} we know that $\hat p^{(14)}$ is always linearly stable with respect to \eqref{2.5}, and therefore the smallest eigenvalue $\mu_1^{(1)}$ of \eqref{4.5} is positive. Then Lemma~\ref{lm4.1} and the instability of $\hat p^{(14)}$ with respect to the full system \eqref{dynamics_pi} with $\rho=0$ imply that either $\mu_{1}^{(2)}<0$ or $\mu_{1}^{(3)}<0$.

If $\mu_{1}^{(2)}<0$, then by the same method we used in the proof of Theorem~\ref{thm:weak_reco}(a), we would have $1/c<\mbox{spr}(K)$ by \eqref{sprK}, whence the positivity of the right-hand side of \eqref{4.7} and \cite[Theorem 7.3]{Hess91} imply that $u_2$ cannot be a positive function in $\bar\Omega$. Thus, $\hat{p}^{(14)}$ leaves the state space when $\rho>0$. Similarly, if $\mu_{1}^{(3)}<0$, then $u_3$ cannot be a positive function in $\bar\Omega$, whence $\hat{p}^{(14)}$ again leaves the state space when $\rho>0$.

In light of the uniqueness of the family of $\hat{p}^{(\rho)}$ which converges to $\hat{p}^{(14)}$  as $\rho\to 0+$, we conclude that there exists a neighbourhood in $\mathbf{X}$ of $\hat{p}^{(14)}$ in which there is no equilibrium of \eqref{dynamics_pi} for sufficiently small $\rho>0$. This completes the proof.
\end{proof}

\section{Strong recombination}\label{sec:strong_reco}
Now we assume that recombination is sufficiently strong relative to
diffusion and selection, i.e., $r\gg 1$. We fix $d>0$ and $s>0$, hence
$\la>0$, work with \eqref{dynamics_pi}, and set $\ep=1/\rh>0$.
We study existence, uniqueness, and stability of two-locus clines for sufficiently small $\ep$ under the assumption (A). It will be convenient to follow the evolution of the allele frequencies $p_A=p_1+p_2$ and $p_B=p_1+p_3$, and the linkage disequilibrium $D=p_1p_4-p_2p_3$, instead of the gamete frequencies $p_i$. The corresponding transformation is given by
\begin{align}\label{trafo_T}
&\mathcal{T} : (p_A,p_B,D) \mapsto (p_1,p_2,p_3,p_4)\nonumber\\
	&\mathcal{T}(p_A,p_B,D) = (p_Ap_B+D,p_A(1-p_B)-D,(1-p_A)p_B-D,(1-p_A)(1-p_B)+D) \,.
\end{align}

It is easily shown that the system of differential equations \eqref{dynamics_pi_a} and \eqref{dynamics_pi_b} with the selection terms \eqref{S_i} is equivalent to
\begin{subequations}\label{eq:ABD_add}
\begin{align}
    \partial_t p_A &= \De p_A + \la\a(x) p_A(1-p_A) + \la\be(x)D  \,, \label{eq:ABD_a} \\
    \partial_t p_B &= \De p_B + \la\be(x) p_B(1-p_B) + \la\a(x)D   \,, \label{eq:ABD_b} \\
    \partial_t D  &= \De D + 2\nabla p_A\cdot\nabla p_B+ \la[\a(x)(1-2p_A) + \be(x)(1-2p_B)]D - \frac{1}{\ep} D  \label{eq:ABD_D} 	
\end{align}
in $\Om\times (0,\infty)$ and
\begin{equation}
	\partial_\nu p_A = \partial_\nu p_B = \partial_\nu D = 0 \quad\text{on } \partial\Om\times (0,\infty)\,.
\end{equation}
 \end{subequations}
Here, $\nabla$ denotes the vector differential operator with derivatives with respect to $x\in\Reals^n$. The constraints \eqref{constraint1} on the $p_i$ are transformed to
\begin{subequations}\label{constraints_twoloc}
\begin{equation}\label{constraint_pA_pB}
	0\le p_A\le1\,, \;  0\le p_B\le1\,,
\end{equation}
and
\begin{equation}\label{constraint_D}
	-\min\{p_A p_B,(1-p_A)(1-p_B)\} \le D \le \min\{p_A (1-p_B),(1-p_A)p_B\}\,,
\end{equation}
\end{subequations}
where these inequalities hold in $\Om\times[0,\infty)$ (e.g., \cite{RB2017}).
In particular, the map $\mathcal{T}:\mathbf{Y}\to\mathbf{X}$, given by \eqref{trafo_T}, is a homeomorphism, where
\begin{align}
	\mathbf{Y}:=\bigl\{&(v_1,v_2,v_3) \in  C(\bar\Omega; [0,1]^2) \times C(\bar\Omega;[-\tfrac14,\tfrac14]): \notag\\
		&-\min\{v_1 v_2, (1-v_1)(1-v_2)\} \leq v_3 \leq \min \{v_1(1-v_2), (1-v_1)v_2\}\bigr\}\,.
\end{align}
In addition, we define
\begin{equation}
	\mathbf{Y}_0 = \left\{(v_1,v_2,v_3)\in \mathbf{Y}: v_1\equiv 0 \text{ or } v_1\equiv 1 \text{ or } v_2\equiv 0 \text{ or } v_2\equiv 1\right\}
\end{equation}
and recall that each of the four edges in $\mathbf{Y}_0=\mathcal{T}^{-1}(\mathbf{X}_0)$ is invariant (Section \ref{sec:basic_dynamics}).

Because strong recombination erodes linkage disequilibrium rapidly, we expect that $D$ will be of order $\ep$ at stationarity (see \cite{RB2009,NHB1999} for related ODE models). If $D\equiv0$ then \eqref{eq:ABD_a} and \eqref{eq:ABD_b} describe two uncoupled one-locus systems, which are well understood (Section \ref{sec:SLClines}). We shall obtain the two-locus cline of \eqref{eq:ABD_add} as a perturbation of the Cartesian product of the two single-locus clines of \eqref{eq:ABD_a} and \eqref{eq:ABD_b} with $D\equiv0$. From Section~\ref{sec:SLClines}, and because we assume (A), we know that both exist if $\la>\max\{\la_{A},\la_{B}\}$, where $\la_{A}=\la_{\alpha}\in(0,\infty)$ and $\la_{B}=\la_{\beta}\in(0,\infty)$ are as in \eqref{lambda_h}.

For $h\in\{\alpha, \beta\}$, let $\theta_h(x)$ denote the global attractor of the single-locus problem at locus $\A$ or $\B$, respectively (Theorem~\ref{thm:singlelocus}). The following is the main result of this section.

\begin{theorem}\label{thm:7.1}
Suppose that (A) holds. For every $\la>0$ with $\la\neq\max \{ \la_A, \la_B\}$ and for sufficiently small $\ep>0$,
the system \eqref{eq:ABD_add} has an equilibrium $(\hat{p}_A, \hat{p}_B, \hat{D})= (\hat{p}_A^{(\ep)}, \hat{p}_B^{(\ep)}, \hat{D}^{(\ep)})$ that attracts all trajectories with initial data in $\mathbf{Y} \setminus \mathbf{Y}_0$, where convergence occurs in {$[C^2(\bar\Om)]^3$}. Moreover, the following conclusions hold.

\noindent {\rm (a)} For every $0 < \la <\max \{ \la_A, \la_B\}$, there exists $\epsilon_0>0$ such that
the system \eqref{eq:ABD_add} admits no internal equilibrium if $\epsilon \in (0,\epsilon_0]$. In fact, at least one of $\theta_\alpha$ and $\theta_\beta$ is trivial, and the globally attracting equilibrium
is independent of $\epsilon$, i.e.,
\begin{equation}
	(\hat{p}_A^{(\epsilon)}, \hat{p}_B^{(\epsilon)}, \hat{D}^{(\epsilon)}) = (\theta_\alpha, \theta_\beta, 0) \in \mathbf{Y}_0\,.
\end{equation}

\noindent {\rm (b)} For every $\la > \max \{ \la_A, \la_B\}$, there exists $\epsilon_0>0$ such that for every $\epsilon \in (0,\epsilon_0]$, the globally attracting equilibrium is internal and satisfies
\begin{equation}\label{eps_estimate_(pA,pB,D)}
		\| (\hat{p}_A^{(\ep)}, \hat{p}_B^{(\ep)}) - (\theta_\alpha, \theta_\beta)\|_{C^1(\bar\Omega)} + \|\hat{D}^{(\ep)}\|_{C(\bar\Omega)} = O(\epsilon)\,,
\end{equation}
i.e., $(\hat{p}_A^{(\ep)}, \hat{p}_B^{(\ep)}, \hat{D}^{(\ep)})$ lies in the interior of $\mathbf{Y}$ and converges to $(\theta_\alpha, \theta_\beta, 0)$ in {$C^1(\bar\Omega) \times C^1(\bar\Omega) \times C(\bar\Omega)$} as $\epsilon \to 0$.
\end{theorem}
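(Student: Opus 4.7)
The plan is to treat \eqref{eq:ABD_add} as a singular perturbation of two decoupled one-locus problems, combining an implicit function theorem construction of the equilibrium with a singular-perturbation plus persistence argument for global attraction. First I would rescale $D = \epsilon \tilde{D}$, which multiplies the stationary version of the third equation of \eqref{eq:ABD_add} by $\epsilon$ and turns the problem into a regular perturbation. At $\epsilon = 0$ the stationary problem decouples into the two one-locus equations for $(p_A, p_B)$ together with the algebraic relation $\tilde{D} = 2\nabla p_A \cdot \nabla p_B$, with natural base solution $(\theta_\alpha, \theta_\beta, \tilde{D}_0)$ where $\tilde{D}_0 := 2\nabla\theta_\alpha \cdot \nabla\theta_\beta$. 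Applying the implicit function theorem in H\"older spaces $C^{2,\gamma} \times C^{2,\gamma} \times C^{0,\gamma}$ with Neumann boundary conditions, the Fr\'echet derivative at the base point is block lower-triangular with diagonal blocks $\Delta + \lambda\alpha(1-2\theta_\alpha)$, $\Delta + \lambda\beta(1-2\theta_\beta)$, and $-\mathrm{Id}$. Each of the first two blocks is invertible because $\lambda\neq\lambda_A,\lambda_B$ forces strict linear stability of $\theta_\alpha$, $\theta_\beta$ by Theorem \ref{thm:singlelocus}, and the third is trivially so; hence IFT yields a unique family $(\hat{p}_A^{(\epsilon)}, \hat{p}_B^{(\epsilon)}, \tilde{\hat D}^{(\epsilon)})$ for small $\epsilon \geq 0$, and setting $\hat{D}^{(\epsilon)} = \epsilon \tilde{\hat D}^{(\epsilon)}$ produces the estimate \eqref{eps_estimate_(pA,pB,D)}.

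For case (a), Theorem \ref{thm:singlelocus} forces at least one of $\theta_\alpha, \theta_\beta$ to be a constant ($0$ or $1$) so that $\tilde{D}_0 \equiv 0$, and one checks directly that $(\theta_\alpha, \theta_\beta, 0) \in \mathbf{Y}_0$ is an exact equilibrium of \eqref{eq:ABD_add} for every $\epsilon > 0$; for case (b), both $\theta_\alpha, \theta_\beta$ are strictly interior and non-constant, so \eqref{constraints_twoloc} together with \eqref{eps_estimate_(pA,pB,D)} places $(\hat{p}_A^{(\epsilon)}, \hat{p}_B^{(\epsilon)}, \hat{D}^{(\epsilon)})$ strictly inside $\mathbf{Y}$ for small $\epsilon$. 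Global attraction in $\mathbf{Y}\setminus\mathbf{Y}_0$ is the principal difficulty, and my plan splits it into two stages. \emph{Stage one} derives rapid decay of $D$: testing \eqref{eq:ABD_D} against $D$, using uniform bounds on $p_A, p_B, \nabla p_A, \nabla p_B$ coming from \eqref{constraints_twoloc} and parabolic regularity, and exploiting the damping $-\epsilon^{-1}D$, I would show $\|D(\cdot,t)\|_{L^2}\leq C\epsilon$ for $t\geq T_1(\epsilon)$, which upgrades to $C^{2,\gamma}$ by parabolic smoothing. \emph{Stage two} notes that beyond $T_1(\epsilon)$ the $(p_A, p_B)$ equations are a uniform $O(\epsilon)$ perturbation of the two decoupled one-locus equations, whose global attractor is $(\theta_\alpha, \theta_\beta)$ by Theorem \ref{thm:singlelocus}. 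A Hale--Raugel-type upper semicontinuity argument for global attractors under regular perturbations then produces $C^2$-convergence to $(\theta_\alpha, \theta_\beta)$, and hence to $(\theta_\alpha, \theta_\beta, 0)$ in case (a). In case (b) I would additionally invoke uniform persistence: the hypothesis $\lambda>\max\{\lambda_A, \lambda_B\}$ combined with the boundary-stability analysis of Section \ref{Section_boundary} renders every monomorphic and single-locus-cline equilibrium linearly unstable with respect to the full system, so Hale--Waltman-style persistence theory gives a positive uniform lower bound on the distance of trajectories from $\mathbf{Y}_0$; this, together with the slow-manifold reduction and the IFT-based strict linear stability of $(\hat{p}_A^{(\epsilon)}, \hat{p}_B^{(\epsilon)}, \hat{D}^{(\epsilon)})$, confines trajectories to its basin of attraction.

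The hardest step, and the one I expect to require the most technical care, is case (b): one must make the slow-manifold reduction and the persistence estimate compatible \emph{uniformly} in small $\epsilon$, since the persistence bound a priori depends on $\epsilon$ through the boundary linearizations and the slow-manifold approximation degrades near $\mathbf{Y}_0$ if $\nabla p_A$ and $\nabla p_B$ blow up. A Hale--Raugel-style continuity of attractors under singular perturbation, adapted to the reaction-diffusion setting of \eqref{eq:ABD_add} and anchored by the IFT-produced strict linear stability established in stage one, is the technical backbone the proof would lean on.
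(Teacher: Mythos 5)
Your overall architecture---perturbing off the decoupled single-locus clines, showing $D=O(\epsilon)$, and using persistence to keep trajectories away from $\mathbf{Y}_0$---is broadly aligned with the paper's, but three steps have genuine gaps. First, the implicit-function-theorem construction does not work as stated: after the rescaling $D=\epsilon\tilde D$ the stationary third equation still contains the term $\epsilon\Delta\tilde D$, so the problem remains a singular perturbation. If you place $\tilde D$ in $C^{0,\gamma}$ this term is undefined; if you place $\tilde D$ in $C^{2,\gamma}$ the third diagonal block of the Fr\'echet derivative at $\epsilon=0$ is $-\mathrm{Id}\colon C^{2,\gamma}\to C^{0,\gamma}$, which is injective but not surjective, so the IFT does not apply. (The paper sidesteps this entirely: existence of the internal equilibrium in case (b) comes from persistence theory, Corollary \ref{cor:existence} via \cite[Theorem 6.2]{ST}, not from a perturbative construction.) Second, your stage-one estimate $\|D(\cdot,t)\|_{L^2}\le C\epsilon$ bounds the wrong quantity. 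To compare the $p_A$-equation with a single-locus equation one must write the selection term as $\la[\alpha+\beta \DA]p_A(1-p_A)$ with $\DA=D/(p_A(1-p_A))$ as in \eqref{wAB}, and near $\mathbf{Y}_0$ an unweighted bound on $D$ gives no control on $\DA$. The paper's key technical input is the weighted estimate $\limsup_t\|\DA\|\le 4\epsilon C_0^2$ of Lemma \ref{lem:7.4}, which rests on the Harnack-type gradient bound $|\nabla p_A|/(p_A(1-p_A))\le C_0$ of Lemma \ref{lem:7.4a}; without it your comparison with the decoupled system degenerates exactly where you need it, both in case (a) and in the persistence step of case (b).

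Third, the persistence and convergence arguments in case (b) are not closed. You invoke linear instability of the single-locus-cline boundary equilibria with respect to the full system, but Section \ref{sec:single-locus_poly} states explicitly that this is unresolved except in special cases; and the uniformity in $\epsilon$ of the persistence bound---which you correctly identify as the hardest step---is precisely what is missing and is not supplied by citing Hale--Raugel or Hale--Waltman. The paper replaces abstract persistence near the boundary by a direct two-sided comparison with the equilibria $\theta_{\alpha\pm\eta}$ of perturbed single-locus problems (Proposition \ref{prop:persistence}), which yields the much stronger statement $\limsup_{t\to\infty}\|p_A(\cdot,t)-\theta_\alpha\|_{C^1(\bar\Omega)}=O(\epsilon)$ for every nontrivial trajectory, uniformly in the initial data. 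Finally, upper semicontinuity of attractors plus local linear stability of the internal equilibrium does not by itself exclude other invariant sets in the persistent region, nor show that the basin of attraction is all of $\mathbf{Y}\setminus\mathbf{Y}_0$; the paper closes this with a quantitative bootstrap (Proposition \ref{prop:final}), namely $\limsup_{t\to\infty}\|(\tilde p_A,\tilde p_B)(\cdot,t)\|_{C^1(\bar\Omega)}\le C_4\epsilon\limsup_{t\to\infty}\|(\tilde p_A,\tilde p_B)(\cdot,t)\|_{C^1(\bar\Omega)}$, which forces the limit superior to vanish once $\epsilon<1/C_4$. These are the concrete mechanisms your sketch would need to supply.
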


\begin{remark}{\rm
By examining the elliptic system satisfied by the stationary solution $(\hat{p}_A, \hat{p}_B, \hat{D})$, and using the fact that $\|\hat D\|_{L^\infty(\Omega)} = O(\epsilon)$, it is not hard to show that $\|\hat D\|_{W^{2,p}(\Omega)} \leq C$ and thus $\|(\hat p_A, \hat p_B)\|_{C^{2,\gamma}(\bar\Omega)} \leq C$. This shows that in fact the convergence in \eqref{eps_estimate_(pA,pB,D)} can be improved to $C^2(\bar\Omega) \times C^2(\bar\Omega) \times C^1(\bar\Omega)$.}
\end{remark}

\begin{remark}\label{rem:7.2}{\rm
If $0<\la \le \min \{ \la_A, \la_B\}$, Theorem \ref{thm:7.1}(a) together with Theorem \ref{thm:singlelocus} implies that a monomorphic equilibrium is globally asymptotically stable for \eqref{eq:ABD_add} with sufficiently small $\epsilon>0$}.
\end{remark}

The case $\la = \max \{ \la_A, \la_B\}$ is degenerate and is briefly discussed in Section \ref{sec:Disc}. If (A) does not hold, then convergence to a boundary equilibrium occurs for every $\la>0$ (Section \ref{sec:Disc}).

\subsection{Preliminaries and proof of Theorem \ref{thm:7.1}(a)} Throughout this subsection, we assume that
$((p_A(x,0), p_B(x,0), D(x,0))\in \mathbf{Y}\setminus \mathbf{Y}_0$. Then, by Lemma~\ref{flow_into_interior}, the solution of \eqref{eq:ABD_add} satisfies $0<p_A(x,t)<1$ and $0<p_B(x,t)<1$ in $\bar\Omega\times (0,\infty)$.
For convenience, we define
\begin{equation}\label{wAB}
	\DA(x,t)= \dfrac{D(x,t)}{p_A(x,t)(1-p_A(x,t))}\,,\quad \DB(x,t)= \dfrac{D(x,t)}{p_B(x,t)(1-p_B(x,t))}\,.
\end{equation}

\begin{lemma}\label{lem:7.4a}
For given $\la>0$, there exists $C_0>0$ independent of $\epsilon$ such that
\begin{equation}\label{eq:lem7.4-1}
	\sup_{x \in \Omega, t \geq 1} \left[ \frac{|\nabla p_A(x,t)|}{p_A(x,t)(1-p_A(x,t))} +  \frac{|\nabla p_B(x,t)|}{p_B(x,t)(1-p_B(x,t))}
	\right] \leq C_0\,.
\end{equation}
In particular,
\begin{equation}\label{eq:lem7.4-2}
	| \nabla p_A(x,t)| + |\nabla p_B(x,t)|  \leq C_0 \quad \text{ for } (x,t) \in \Omega\times[1,\infty)\,.	
\end{equation}
\end{lemma}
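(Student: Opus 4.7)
The plan is to obtain the bound on $|\nabla p_A|/[p_A(1-p_A)]$ (and its $p_B$ analogue) via a Harnack-type log-gradient estimate for positive solutions of linear parabolic equations with uniformly bounded coefficients under Neumann boundary conditions; the weaker inequality \eqref{eq:lem7.4-2} then follows at once from $p_A(1-p_A)\leq\tfrac14$.

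The key algebraic observation is that the $1/\epsilon$ term does not appear in \eqref{eq:ABD_a}--\eqref{eq:ABD_b}, and the constraints \eqref{constraints_twoloc} supply the pointwise bounds $|D|/p_A\leq 1-p_B$, $|D|/(1-p_A)\leq p_B$, $|D|/p_B\leq 1-p_A$, $|D|/(1-p_B)\leq p_A$. I therefore rewrite \eqref{eq:ABD_a} in two equivalent linear forms,
\begin{equation*}
\partial_t p_A-\Delta p_A = c_A(x,t)\,p_A, \qquad \partial_t(1-p_A)-\Delta(1-p_A) = c_A'(x,t)\,(1-p_A),
\end{equation*}
where $c_A=\la\alpha(1-p_A)+\la\beta D/p_A$ and $c_A'=-\la\alpha p_A-\la\beta D/(1-p_A)$ are bounded in $L^\infty$ by $\la(\|\alpha\|_\infty+\|\beta\|_\infty)$, uniformly in $\epsilon$; analogous formulas hold for $p_B$ via \eqref{eq:ABD_b}. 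By Lemma \ref{flow_into_interior}, the four functions $p_A$, $1-p_A$, $p_B$, $1-p_B$ are strictly positive on $\bar\Omega\times(0,\infty)$ under the standing hypothesis on the initial data.

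Next I invoke the parabolic Harnack inequality (Moser-type, with Neumann data on the smooth bounded domain $\Omega$): because the coefficients are bounded uniformly in $\epsilon$, there exists $C_H>0$ independent of $\epsilon$ such that for every $t_0\geq 1$
\begin{equation*}
\sup_{\bar\Omega\times[t_0,t_0+1]} p_A \leq C_H \inf_{\bar\Omega\times[t_0,t_0+1]} p_A,
\end{equation*}
and similarly for the three companion functions. Fixing $(x_0,t_0)\in\bar\Omega\times[1,\infty)$ and normalizing $\tilde p_A(x,t):=p_A(x,t)/p_A(x_0,t_0)$ on $\bar\Omega\times[t_0-\tfrac12,t_0+\tfrac12]$, the rescaled solution satisfies the \emph{same} linear equation (with the same $c_A$) and stays in $[1/C_H,C_H]$. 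Linear parabolic Schauder regularity up to the Neumann boundary then yields $|\nabla\tilde p_A(x_0,t_0)|\leq C$ with $C$ independent of $(x_0,t_0)$ and of $\epsilon$, hence $|\nabla p_A|/p_A\leq C$ uniformly. Repeating the argument for $1-p_A$, $p_B$, $1-p_B$ produces four log-gradient estimates; distinguishing the cases $p_A\leq\tfrac12$ and $p_A\geq\tfrac12$ (using $p_A(1-p_A)\geq\tfrac12\min\{p_A,1-p_A\}$), and likewise for $p_B$, combines them into \eqref{eq:lem7.4-1}. Inequality \eqref{eq:lem7.4-2} is then immediate.

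The main obstacle is securing the Harnack inequality with a constant uniform in $\epsilon$; this reduces to the standard Moser-type Harnack for linear parabolic equations with $L^\infty$ coefficients and Neumann data (see, e.g., Lieberman's \emph{Second Order Parabolic Differential Equations}), together with a finite-covering argument to pass from the local estimate to one valid on all of $\bar\Omega$. A natural alternative would be a Bernstein-type computation on the log-odds $v=\log[p_A/(1-p_A)]$, which satisfies $\partial_t v-\Delta v=(1-2p_A)|\nabla v|^2+\la\alpha+\la\beta D_A$; however, applying Bernstein to $|\nabla v|^2$ requires control of $\nabla D_A$, which is delicate because the $D$-equation carries the $1/\epsilon$ sink. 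The Harnack-plus-normalization route sidesteps this issue and is therefore preferable.
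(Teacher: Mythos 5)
Your strategy is essentially the one the paper uses: treat \eqref{eq:ABD_a} as a linear parabolic equation with an $L^\infty$ coefficient that is bounded uniformly in $\epsilon$ thanks to $|D_A|\le 2$ from \eqref{constraints_twoloc}, then combine the Neumann--Harnack inequality with linear parabolic regularity, applied separately to $p_A$, $1-p_A$, $p_B$, $1-p_B$, and add the four log-gradient bounds via $\tfrac{1}{p_A(1-p_A)}=\tfrac{1}{p_A}+\tfrac{1}{1-p_A}$. The paper's proof runs the same ingredients in the order ``$L^p$-estimate first, then divide by $\inf p_A$ via Harnack,'' whereas you normalize first and then apply regularity; these are interchangeable.

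Two steps need repair, one of which matters. First, and more seriously: you invoke \emph{Schauder} regularity for $\tilde p_A$, but Schauder constants depend on the H\"older norms of the coefficients, and the coefficient $c_A=\la\alpha(1-p_A)+\la\beta D/p_A$ contains $D_A$, whose H\"older (or gradient) norm is \emph{not} controlled uniformly in $\epsilon$ --- the $D$-equation carries the $1/\epsilon$ sink, and the paper's own $C^{1+\gamma}$ bound on $D$ is explicitly $\epsilon$-dependent. As written, your constant $C$ could therefore depend on $\epsilon$, defeating the point of the lemma. The fix is exactly what the paper does: treat $c_A\tilde p_A$ as a bounded right-hand side and use interior-plus-boundary $W^{2,1,p}$ estimates together with Sobolev embedding into $C^{1}$, which require only $L^\infty$ control of $c_A$. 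Second, a minor point: the inequality $\sup_{\bar\Omega\times[t_0,t_0+1]}p_A\le C_H\inf_{\bar\Omega\times[t_0,t_0+1]}p_A$ over a \emph{common} time interval is not the standard Harnack inequality, which compares a supremum at an earlier time with an infimum at a later time (this time-offset form is what the paper uses, citing Lieberman's Corollary 7.42). Your same-interval version does follow, but only after supplementing Harnack with the two-sided comparison bounds $e^{-M|t-s|}\le \sup_\Omega u(\cdot,t)/\sup_\Omega u(\cdot,s)$ coming from $|c_A|\le M$; that half-line of justification should be added. Also, $|D|/p_A\le \max\{p_B,1-p_B\}$ rather than $1-p_B$, but the bound by $1$ is all you need.
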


\begin{proof}
In light of \eqref{wAB}, we can rewrite \eqref{eq:ABD_a} and its boundary condition as
\begin{equation}\label{eq:pA}
	\begin{cases}
	\partial_t p_A - \Delta p_A = \la \left[\alpha + \beta \DA \right](1-p_A)p_A   & \text{ in }\Om\times (0,\infty)\,, \\
	\partial_\nu p_A = 0 &\text{ on } \partial\Om\times (0,\infty)\,,
	\end{cases}
\end{equation}
where, by the constraints \eqref{constraints_twoloc},
\begin{equation}\label{eq:wA}
	|\DA |  = \frac{|D|}{p_A} + \frac{|D|}{1-p_A} \leq 2\,.
 \end{equation}
Hence, $p_A \geq 0$ satisfies the differential inequality
\begin{equation}
\begin{cases}
	\partial_t p_A - \Delta p_A \leq M_0 p_A  &\text{ in } \Omega \times (0,\infty)\,,\\
	\partial_\nu p_A = 0 &\text{ on }  \partial\Omega \times(0,\infty)\,,
\end{cases}
\end{equation}
where $M_0 = \la (\|\alpha\|_{C(\bar\Omega)} + 2\|\beta\|_{C(\bar\Omega)})$. By comparison we obtain
\begin{equation*}
	\|p_A\|_{C(\bar\Omega \times [t-1, t])} \leq e^{M_0} \|p_A(\cdot, t-1)\|_{C(\bar\Omega)} \quad \text{ for }t \geq 1.
\end{equation*}
 Now, we may apply a parabolic $L^p$-estimate to the solution $p_A$ of \eqref{eq:pA} and obtain a constant $C_1>0$ (independent of $t \geq 1$) such that
$$
	\|p_A(\cdot,t)\|_{C^1(\bar\Omega)} \leq C_1 \|p_A(\cdot,t-1)\|_{C(\bar\Omega)} \quad \text{ for }t \geq 1.
$$
Hence,
\begin{equation}\label{eq:wAA}
	\sup_{x \in \Omega} \frac{\left|\nabla p_A(x,t)\right|}{p_A(x,t)} \leq  \frac{C_1\sup_{x' \in \Omega} p_A(x', t-1)}{\inf_{x' \in \Omega} p_A(x',t)} \leq C_2 \quad \text{ for }t \geq 1,	
\end{equation}
where the second inequality is based on a standard Harnack inequality for homogeneous parabolic equations with uniformly bounded coefficients \cite[Corollary 7.42]{lieberman}. (Due to the Neumann boundary condition, the Harnack inequality can be applied up to the boundary of the spatial domain $\Omega$.)

By repeating the argument with $1-p_A$, we obtain
\begin{equation}\label{eq:wAAA}
	\sup_{x \in \Omega} \frac{|\nabla p_A(x,t)|}{1-p_A(x,t)}  = \sup_{x \in \Omega} \frac{|\nabla (1-p_A(x,t))|}{1-p_A(x,t)}\leq  C_3 \quad \text{ for }t \geq 1.
\end{equation}
Combining \eqref{eq:wAA} and \eqref{eq:wAAA}, we deduce
\begin{equation}
	\sup_{x \in \Omega} \frac{|\nabla p_A|}{p_A(1-p_A)}
= \sup_{x \in \Omega}\left[ \frac{|\nabla p_A|}{p_A} + \frac{|\nabla p_A|}{1-p_A} \right]
 \leq C_2+C_3 \quad \text{ for } t\geq 1\,.
\end{equation}
The corresponding estimate for $p_B$ follows analogously.
\end{proof}

\begin{remark}\label{remark:lem7.4}
{\rm The parabolic $L^p$ estimate and the Harnack inequality require only the boundedness of $\la$. Therefore, for each fixed $M>0$, the bound $C_0$ in Lemma  \ref{lem:7.4a} can be chosen uniformly for $\la \in (0,M]$ and $\epsilon \in (0,\infty)$.}
\end{remark}

\begin{lemma}\label{lem:7.4}
For given $\la>0$ and $\epsilon>0$ such that
\begin{equation}\label{eq:epsilon_smaller}
\frac{1}{2\epsilon} > 3 \la \|\beta \|_{C(\bar\Omega)} + 2 C_0^2,
\end{equation}
where $C_0$ is as in Lemma \ref{lem:7.4a}, we have
\begin{subequations}\label{eq:cor7.4a}
\begin{alignat}{2}
	&\limsup_{ t\to\infty} \left\|\DA(\cdot, t)\right\|_{C(\bar\Omega)}  \leq  4 \epsilon C_0 \limsup_{t \to \infty} \|\nabla p_B\|_{C(\bar\Omega)}  \,, \label{eq:cor7.4aa}\\
	&\limsup_{ t\to\infty} \left\|\DB(\cdot, t)\right\|_{C(\bar\Omega)}  \leq  4 \epsilon C_0 \limsup_{t \to \infty} \|\nabla p_A\|_{C(\bar\Omega)}  \,.\label{eq:cor7.4ab}
\end{alignat}
\end{subequations}
In particular, the following holds:\\
\noindent {\rm (a)}
if $p_B(\cdot,t) \to 0$ or $1$ in $C(\bar\Omega)$, then $\DA(\cdot,t) \to 0$ in $C(\bar\Omega)$;\\
\noindent {\rm (b)} if $p_A(\cdot,t) \to 0$ or $1$ in $C(\bar\Omega)$, then $\DB(\cdot,t) \to 0$ in $C(\bar\Omega)$;\\
\noindent {\rm (c)}
$\limsup_{ t\to\infty} \left\|\DA(\cdot, t)\right\|_{C(\bar\Omega)}  \leq  4 \epsilon C_0^2$ \;and\;
	$\limsup_{ t\to\infty} \left\|\DB(\cdot, t)\right\|_{C(\bar\Omega)}  \leq  4 \epsilon C_0^2$.
\end{lemma}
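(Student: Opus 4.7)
The plan is to derive a scalar parabolic PDE for $W:=\DA = D/q$, where $q:=p_A(1-p_A)$, and then apply a parabolic comparison principle in which the strong damping $-W/\ep$ dominates; the analogous bound for $\DB$ will follow by symmetry. First, using $\nabla q = (1-2p_A)\nabla p_A$ and $\De q = (1-2p_A)\De p_A - 2|\nabla p_A|^2$ together with \eqref{eq:ABD_a} and \eqref{eq:ABD_D}, I would carry out a chain-rule computation (in which the $\la\a(1-2p_A)D$ contribution from the $D$-equation and the $\la\a q$ contribution from $\partial_t p_A$ cancel) to arrive at
\begin{equation*}
	\partial_t W - \De W - \mathbf{b}\cdot\nabla W = f(x,t) + \gamma(x,t,W)\,W \quad\text{in }\Om\times(0,\infty),
\end{equation*}
together with Neumann BC (inherited from those of $D$ and $p_A$), where $\mathbf{b}:=2(1-2p_A)\nabla p_A/q$, $f:=2\nabla p_A\cdot\nabla p_B/q$, and $\gamma(x,t,W):=-1/\ep+\la\be(1-2p_B) - 2|\nabla p_A|^2/q - \la(1-2p_A)\be\,W$. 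Note that $|W|\le 2$ by \eqref{eq:wA}.

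Next, using the bounds $|\nabla p_A|/q\le C_0$ and $|\nabla p_A|\le C_0$ from Lemma~\ref{lem:7.4a}, together with $|1-2p_A|\le 1$ and $|W|\le 2$, I would estimate $\gamma(x,t,W)\le -1/\ep + 3\la\|\be\|_{C(\bar\Om)} + 2C_0^2 \le -1/(2\ep)$ by \eqref{eq:epsilon_smaller}, as well as $|f|\le 2C_0|\nabla p_B|$. Defining $h(t):=2C_0\|\nabla p_B(\cdot,t)\|_{C(\bar\Om)}$ and letting $\tilde W$ solve the ODE $\tilde W'(t) = -\tilde W(t)/(2\ep) + h(t)$ with $\tilde W(1):=\|W(\cdot,1)\|_{C(\bar\Om)}$, the parabolic maximum principle applied to $W-\tilde W$ and to $-W-\tilde W$ (with the quadratic term $-\la(1-2p_A)\be\,W^2$ absorbed into a linear coefficient bounded above by $-1/(2\ep)$ thanks to $|W|\le 2$) will give $|W(x,t)|\le\tilde W(t)$ for $t\ge 1$. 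Solving the ODE explicitly and taking $\limsup_{t\to\infty}$ yields $\limsup\|\DA(\cdot,t)\|_{C(\bar\Om)}\le 2\ep\limsup h(t) = 4\ep C_0\limsup\|\nabla p_B\|_{C(\bar\Om)}$, i.e., \eqref{eq:cor7.4aa}; swapping the roles of $A$ and $B$ delivers \eqref{eq:cor7.4ab}.

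For the consequences, (c) is immediate from \eqref{eq:cor7.4a} and the uniform bound $\|\nabla p_B\|_{C(\bar\Om)}\le C_0$ (and similarly for $\nabla p_A$) supplied by Lemma~\ref{lem:7.4a}. For (a), if $p_B(\cdot,t)\to 0$ or $1$ uniformly, the constraint \eqref{constraint_D} forces $D(\cdot,t)\to 0$ uniformly (each of the four bounding expressions contains a factor of $p_B$ or $1-p_B$), and parabolic $L^p$-regularity applied to \eqref{eq:ABD_b} (as in the proof of Lemma~\ref{lem:7.4a}) then gives $\|\nabla p_B(\cdot,t)\|_{C(\bar\Om)}\to 0$; the conclusion follows from \eqref{eq:cor7.4aa}. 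Statement (b) is analogous. The main obstacle is the careful chain-rule derivation of the $W$-equation and the treatment of the quadratic nonlinearity $-\la(1-2p_A)\be\,W^2$ in the comparison argument; this is resolved by exploiting $|W|\le 2$ to subsume the nonlinearity into an effective linear coefficient, ensuring that the damping $-1/\ep$ dominates all competing contributions under \eqref{eq:epsilon_smaller}.
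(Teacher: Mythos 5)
Your proposal is correct and follows essentially the same route as the paper: derive the parabolic equation for $\DA=D/(p_A(1-p_A))$, use \eqref{eq:epsilon_smaller} together with $|\DA|\le 2$ and the bounds of Lemma~\ref{lem:7.4a} to make the damping $1/(2\epsilon)$ dominate, and compare with a supersolution of the form ``$O(\epsilon)\cdot\sup_t\|\nabla p_B\|$ plus a decaying exponential.'' The only (immaterial) differences are that you compare directly with an ODE supersolution rather than the paper's explicit one, and that for part (a) you pass through $D\to 0$ and parabolic regularity where the paper reads $|\nabla p_B|\le C_0\,p_B(1-p_B)\to 0$ straight off \eqref{eq:lem7.4-1}.
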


\begin{remark}\label{rem:D_to_O(eps)}\rm
It is easy to deduce from \eqref{wAB} and Lemma \ref{lem:7.4}(c) that for every $\la>0$,
\begin{equation}\label{eq:cor7.4b}
	\limsup_{t \to \infty} \|D(\cdot,t)\|_{C(\bar\Omega)}  \leq \limsup_{t \to \infty} \left\|\DA(\cdot, t) \right\|_{C(\bar\Omega)} = O(\epsilon)
\end{equation}
as $\ep\to0$.
This shows that indeed, as argued verbally in Section \ref{sec:reco_LD} and above, linkage disequilibrium decays to values close to 0 if recombination is sufficiently strong. Similar results were proved previously for general non-spatial multilocus models \cite{N1993,NHB1999} as well as for spatial models with a finite number of demes \cite{RB2009}. However, \eqref{eq:cor7.4a} is stronger than \eqref{eq:cor7.4b}, and it will be essential for the proof of Theorem \ref{thm:7.1}.
\end{remark}

\begin{proof}[Proof of Lemma \ref{lem:7.4}]
From \eqref{eq:ABD_a}, \eqref{eq:ABD_D}, and \eqref{wAB}, we derive
\begin{subequations}\label{eq:wAeq}
\begin{alignat}{2}
	&\partial_t \DA  - \Delta \DA  -\frac{2(1-2p_A)\nabla p_A}{p_A(1-p_A)}\cdot \nabla \DA
	+ \DA \Biggl[\la \beta(1-2p_A)\DA  - \la \beta (1-2p_B) \notag \\
	&\qquad+ \frac{2|\nabla p_A|^2}{p_A(1-p_A)} + \frac{1}{\epsilon} \Biggr] = \frac{2\,\nabla p_A \cdot \nabla p_B }{p_A(1-p_A)}   &&\hspace{-30mm}\text{in } \Omega\times(0,\infty)\,,\\
		&\partial_\nu \DA  = 0 &&\hspace{-30mm}\text{on } \partial\Omega\times(0,\infty)\,.
\end{alignat}
\end{subequations}
Because each of $\underlineDA   \in \{\DA,-\DA \}$ satisfies the differential inequality
\begin{align*}
	& \partial_t \underlineDA   - \Delta \underlineDA    -\frac{2(1-2p_A)\nabla p_A}{p_A(1-p_A)}  \cdot \nabla  \underlineDA    \\
	&\quad+\underlineDA  \left[\la \beta(1-2p_A)\DA  - \la \beta (1-2p_B)  + \frac{2|\nabla p_A|^2}{p_A(1-p_A)} + \frac{1}{\epsilon} \right]
	\leq  \frac{2|\nabla p_A | }{p_A(1-p_A)} \left|\nabla p_B\right|
 \end{align*}
for a subsolution, their maximum $|\DA | = \max\{\DA , -\DA \}$ satisfies the same differential inequality in the weak sense.

From \eqref{eq:epsilon_smaller}, \eqref{eq:lem7.4-1}, and \eqref{eq:lem7.4-2}, we obtain
\begin{align*}
	\frac{1}{2\epsilon} &\geq \sup_{x \in \Omega,\, t \geq 1} \left(3 \la |\beta(x)| + \frac{2|\nabla p_A|}{p_A(1-p_A)}|\nabla p_A|\right) \notag \\
	&\geq \sup_{x \in \Omega,\, t \geq 1} \left| \la \beta (1-2p_A)\DA  - \la \beta (1-2p_B) +  \frac{2|\nabla p_A|^2}{p_A(1-p_A)}\right|\,,
\end{align*}
where we used the fact $|\DA |\leq 2$ by \eqref{eq:wA}. Then $|\DA |$ is a weak subsolution of
\begin{subequations}\label{eq:abswAeq}
\begin{alignat}{2}
	&\partial_t \mathcal{D} - \Delta \mathcal{D} - \frac{2(1-2p_A)\nabla p_A}{p_A(1-p_A)}  \cdot  \nabla\mathcal{D} +\frac{\mathcal{D}}{2\epsilon}  = \frac{2|\nabla p_A | }{p_A(1-p_A)} \left|\nabla p_B\right|
	&&\quad\text{ in } \Omega\times(0,\infty)\,,\\
	&\partial_\nu \mathcal{D} = 0 &&\quad\text{ on } \partial\Omega\times(0,\infty)\,.
\end{alignat}
\end{subequations}

Now for every $t_0 \geq 1$, we may construct a supersolution of \eqref{eq:abswAeq} in the domain $\Omega \times [t_0,\infty)$ as follows:
$$
\overlineDA  := 4\epsilon \sup_{t' \geq t_0} \left[\left\| \frac{\nabla p_A(\cdot,t')}{p_A(\cdot,t')(1-p_A(\cdot,t'))}\right\|_{C(\bar\Omega)} \left\| \nabla p_B(\cdot, t')\right\|_{C(\bar\Omega)}\right] +2 e^{-(t-t_0)/(2\epsilon)}.
$$
Then, clearly, $\overlineDA   \geq 2 \geq |\DA |$ for $x\in \Omega$ and $t = t_0$. Hence, we can deduce by comparison that
\begin{equation}\label{eq:est_wA}
		\sup_{x\in\Omega}|\DA(x,t)| \leq  4\epsilon \sup_{t' \geq t_0} \left[\left\| \frac{\nabla p_A(\cdot,t')}{p_A(\cdot,t')(1-p_A(\cdot,t'))}\right\|_{C(\bar\Omega)} \left\| \nabla p_B(\cdot, t')\right\|_{C(\bar\Omega)}\right] +2 e^{-(t-t_0)/(2\epsilon)}
\end{equation}
for $t \geq t_0$. By letting $t \to \infty$ and then $t_0 \to \infty$, we obtain \eqref{eq:cor7.4aa}. An analogous argument for $\DB $ yields \eqref{eq:cor7.4ab}.

For assertion (a), we observe that if $p_B(\cdot,t)$ approaches $0$ or $1$ uniformly as $t \to \infty$, then Lemma \ref{lem:7.4a} informs us that $\|\nabla p_B(\cdot,t)\|_{C(\bar\Omega)} \to 0$ as $t \to \infty$. Hence,
we obtain assertion (a) by \eqref{eq:cor7.4aa}. The proof of (b) is analogous and is omitted. Part (c) follows directly from \eqref{eq:cor7.4a} and \eqref{eq:lem7.4-2}.
\end{proof}

\begin{remark}\label{remark:lem7.5} {\rm From Remark \ref{remark:lem7.4} and \eqref{eq:epsilon_smaller}, we {conclude} that for each $M>0$,  the estimates in  \eqref{eq:cor7.4a}, Lemma \ref{lem:7.4}(c), and \eqref{eq:cor7.4b} hold for $C_0$ chosen uniformly for $\la \in (0,M]$ and for $\epsilon  <(6\|\beta\|_{C(\bar\Omega)} + 4 C_0^2)^{-1}$}.
\end{remark}

\begin{lemma}\label{lem:7.4c}
 {\rm (a)} If $0<\la < \la_A$, there exists $\tilde\epsilon_a>0$ such that for $\epsilon \in (0,\tilde\epsilon_a]$,
$$
	\lim_{ t\to\infty} p_A(\cdot,t)
	= \left\{ \begin{array}{ll} 1 &\text{ if }\bar\alpha >0,\\
	0 &\text{ if }\bar\alpha<0 \end{array}\right.  \quad \text{ in }C^1(\overline\Omega)\,.
$$
 \noindent {\rm (b)} If $0<\la < \la_B$, there exists $\tilde\epsilon_b>0$ such that for $\epsilon \in (0,\tilde\epsilon_b]$,
$$
	\lim_{ t\to\infty} p_B(\cdot,t) = \left\{ \begin{array}{ll} 1 &\text{ if }\bar\beta >0\,,\\
	0 &\text{ if }\bar\beta<0 \end{array}\right.  \quad \text{ in }C^1(\overline\Omega).
$$
 \noindent {\rm (c)}  If $0<\la < \max\{\la_A, \la_B\}$, there exists $\tilde\epsilon = \max\{\tilde\epsilon_a, \tilde\epsilon_b\}>0$ such that for $\epsilon \in (0,\tilde\epsilon]$,
$$
	\lim_{t \to \infty} D(\cdot,t) = 0 \;\text{ in } C(\bar\Omega)\,.
$$
\end{lemma}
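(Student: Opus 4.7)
The plan is to treat the linkage term $\la\beta D$ in \eqref{eq:ABD_a}--\eqref{eq:ABD_b} as a vanishing perturbation (of size $O(\ep)$) of two uncoupled single-locus problems, and then invoke Theorem \ref{thm:singlelocus} together with Lemma \ref{lem:7.4}. First, I establish (a) for $\bar\alpha<0$ via a scalar parabolic comparison; the case $\bar\alpha>0$ follows by applying the same argument to $1-p_A$, (b) follows by swapping $A\leftrightarrow B$, and (c) follows by combining (a) or (b) with the cross-implications in Lemma \ref{lem:7.4}(a)--(b).

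For (a) with $\bar\alpha<0$, the form \eqref{eq:pA} reads $\partial_t p_A-\De p_A=\la(\alpha+\beta\DA)p_A(1-p_A)$. Since $\la<\la_A=\la^*(\alpha)$, the scaling $\la^*(c h)=\la^*(h)/c$ for $c>0$ gives $\la^*(\la\alpha)=\la_A/\la>1$. Continuous dependence of $\la^*(h)$ on $h\in C(\bar\Omega)$ (a consequence of Lemma \ref{lm1.2} applied via the implicit function theorem to $\mu_1(\tilde\la;h)=0$) lets me fix $\eta>0$ so small that $\la^*(\la\alpha+\eta)>1$ and $\la\bar\alpha+\eta<0$. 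Lemma \ref{lem:7.4}(c) then allows me to fix $\tilde\epsilon_a$ satisfying \eqref{eq:epsilon_smaller} and $4\la\|\beta\|_{C(\bar\Omega)}\tilde\epsilon_a C_0^2<\eta/2$: for every $\ep\in(0,\tilde\epsilon_a]$ and every initial datum in $\mathbf{Y}\setminus\mathbf{Y}_0$, there exists $T_0$ (depending on the datum) with $\|\la\beta\DA(\cdot,t)\|_{C(\bar\Omega)}\le\eta$ for $t\ge T_0$. Let $\bar q$ solve
\begin{equation*}
\partial_t\bar q-\De\bar q=(\la\alpha+\eta)\bar q(1-\bar q),\qquad \partial_\nu\bar q|_{\partial\Om}=0,\qquad \bar q(\cdot,T_0)=p_A(\cdot,T_0).
\end{equation*}
Since $p_A(1-p_A)\ge 0$ and $\la(\alpha+\beta\DA)\le\la\alpha+\eta$ on $[T_0,\infty)\times\bar\Om$, parabolic comparison yields $p_A\le\bar q$ there. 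Theorem \ref{thm:singlelocus}(a) applied with weight $h=\la\alpha+\eta$ (for which $\bar h<0$ and $\la^*(h)>1$) gives $\bar q(\cdot,t)\to 0$ in $C^2(\bar\Omega)$, hence $p_A(\cdot,t)\to 0$ in $C(\bar\Omega)$.

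To upgrade to $C^1$, parabolic Schauder (or $L^p$) estimates applied to \eqref{eq:ABD_a}---whose right-hand side is uniformly bounded on $\Om\times[1,\infty)$ and H\"older in $x$---furnish a uniform bound $\sup_{t\ge 1}\|p_A(\cdot,t)\|_{C^{1,\gamma}(\bar\Omega)}\le C$; combined with the $C^0$-convergence, the compact embedding $C^{1,\gamma}\hookrightarrow C^1$ gives $p_A(\cdot,t)\to 0$ in $C^1(\bar\Omega)$. For $\bar\alpha>0$, $\tilde p_A:=1-p_A$ satisfies $\partial_t\tilde p_A-\De\tilde p_A=\la(-\alpha-\beta\DA)\tilde p_A(1-\tilde p_A)$ with weight $-\alpha$ of negative mean, so the same argument gives $\tilde p_A\to 0$, i.e., $p_A\to 1$, in $C^1(\bar\Omega)$. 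Part (b) is identical after exchanging $A$ and $B$. For (c), assume without loss of generality $\la<\la_A$; by (a), $p_A(\cdot,t)\to 0$ or $1$ in $C(\bar\Omega)$, so Lemma \ref{lem:7.4}(b) yields $\DB(\cdot,t)\to 0$ in $C(\bar\Omega)$, and the identity $D=\DB\, p_B(1-p_B)$ with $p_B(1-p_B)\le\tfrac14$ gives $D(\cdot,t)\to 0$ in $C(\bar\Omega)$.

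The main technical difficulty is the circular dependence: the would-be ``perturbation'' $\beta\DA$ depends on $p_A$ itself, so the comparison coefficient cannot be prescribed a priori. This is exactly what Lemma \ref{lem:7.4}(c) circumvents, by supplying an a priori bound $\limsup_t\|\DA\|_\infty=O(\ep)$ whose constant is uniform over admissible initial data. A secondary point is that I rely on continuity of the threshold $\la^*$ under $C(\bar\Omega)$-perturbations of sign-changing weights with negative mean---a standard consequence of the self-adjoint spectral theory behind Lemma \ref{lm1.2}---to absorb the residual $\eta$ into the weight without crossing the bifurcation threshold $\la^*=1$.
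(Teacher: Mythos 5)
Your proof is correct and follows essentially the same route as the paper's: a parabolic comparison of $p_A$ with the perturbed single-locus equation whose weight is shifted by a small constant, justified by the $O(\epsilon)$ bound on $\DA$ from Lemma \ref{lem:7.4}(c), continuity of $\la^*$ in the weight, and Theorem \ref{thm:singlelocus}, followed by a parabolic-regularity upgrade to $C^1$. The only cosmetic differences are that you absorb $\la$ into the weight before perturbing (the paper perturbs $\alpha$ to $\alpha+\delta_1$ directly) and that you deduce (c) via the cross-implications of Lemma \ref{lem:7.4}(a)--(b) rather than directly from the constraints \eqref{constraints_twoloc}; both are equivalent.
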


\begin{proof}
First, we prove (a) and suppose $\bar\alpha<0$. By Theorem \ref{thm:singlelocus}, $0$ is a linearly stable equilibrium of
\begin{subequations}\label{eq:theta_time}
\begin{alignat}{2}
		&\partial_t \theta - \Delta \theta = \la \alpha \theta(1-\theta) &&\quad\text{ in } \Omega\times(0,\infty)\,, \\
		&\partial_\nu \theta  =0 &&\quad\text{ on } \partial\Omega\times(0,\infty)\,,
\end{alignat}
\end{subequations}
and it attracts all solutions of \eqref{eq:theta_time} that are not identically equal to $1$. Because $\alpha$ changes sign and $\bar\alpha<0$, for $\delta_1>0$ sufficiently small, $\alpha+\delta_1$ still changes sign and $\overline{\alpha+\delta_1}<0$.  Moreover, $\la^{*}(\alpha+\delta_1)$, defined below \eqref{1.6}, decreases continuously as $\delta_1$ increases from $0$ [46, Proposition~1.5]. Because $\la<\la_A=\la^*(\alpha)$, we may choose $\delta_1$ sufficiently small such that $\la<\la^{*}(\alpha+\delta_1)$. Therefore, $0$ is globally asymptotically stable also for
\begin{subequations}\label{eq:theta_time_upper}
\begin{alignat}{2}
		&\partial_t \overline\theta - \Delta \overline\theta = \la (\alpha+ \delta_1)\overline\theta(1  -\overline\theta) &&\quad\text{ in } \Omega\times(0,\infty)\,, \\
		&\partial_\nu \overline\theta  =0 &&\quad\text{ on } \partial\Omega\times(0,\infty)\, .
\end{alignat}
\end{subequations}

By Lemma \ref{lem:7.4}(c), let $\epsilon$ be sufficiently small so that for some $t_0>0$,  $\left|\beta \DA \right| \leq \delta_1$ in $\Omega\times[t_0,\infty)$, and
let $\overline\theta$ be a solution of \eqref{eq:theta_time_upper} with initial condition $\overline\theta(x,t_0) = p_A(x,t_0)$. Then
$$
\partial_t p_A - \Delta p_A = \la \left[\alpha + \beta \DA \right] p_A(1-p_A) \leq \la(\alpha+ \delta_1) p_A(1-p_A)
$$
on $\Omega\times[t_0,\infty)$. Since also $\partial_\nu p_A = 0$ on $\partial\Omega \times(0,\infty)$ and $p_A(x,t_0)  = \overline\theta(x,t_0)$ in $\Omega$, we deduce by comparison that
$$
	0 \leq p_A(x,t) \leq \overline\theta(x,t) \quad \text{ in } \Omega\times[t_0,\infty)\,.
$$
Because $\left\|\overline\theta(\cdot,t)\right\|_{C(\bar\Omega)} \to 0$ as $t \to \infty$, we have $\left\|p_A(\cdot,t)\right\|_{C(\bar\Omega)} \to 0$ as $t \to \infty$. By parabolic regularity, we obtain $\|p_A(\cdot,t)\|_{C^1(\bar\Omega)} \to 0$.
This proves (a) if $\bar\alpha<0$. The proofs of (a) for $\bar\alpha >0$ and of (b) are analogous and are omitted.

By \eqref{constraints_twoloc}, statement (c) follows directly from (a) and (b).
\end{proof}

\begin{remark}\label{remark:lem7.5b}
{\rm For every given $\delta>0$, the constant $\delta_1$ in the above proof can be chosen uniformly for $\la \in (0, \la_A - \delta]$.
Hence by Lemma \ref{lem:7.4} and Remark \ref{remark:lem7.5} one can choose
$\tilde\epsilon_a$ (resp., $\tilde\epsilon_b$) uniformly for $\la \in (0,\la_A - \delta]$ (resp. $\la \in (0,\la_B - \delta]$).  }
\end{remark}

\begin{lemma}\label{lem:U1}
Suppose $\la > \la_A$, and define
\begin{equation}\label{def_L_ph}
	L_\ph = -\Delta - \la\alpha (1-2\ph)\,.
\end{equation}
Then there exists $\delta_1>0$ such that if
$\ph \in C(\bar\Omega)$ satisfies $\|\ph - \theta_\alpha\|_{C(\bar\Omega)} < \delta_1$, then
\begin{equation}\label{eq:spectrum}
	\sigma(L_\ph) \subset \{ z \in \mathbb{C}: \textup{Re}\, z > \delta_0\} \quad \text{ for some }\delta_0 >0.
\end{equation}
\end{lemma}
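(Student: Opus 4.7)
The plan is to recognize $L_\varphi$ as a self-adjoint Schr\"odinger operator on $L^2(\Omega)$ with Neumann boundary conditions and to reduce the claim to showing that its principal eigenvalue stays uniformly bounded away from zero for $\varphi$ in a $C(\bar\Omega)$-neighbourhood of $\theta_\alpha$. The positivity at $\varphi=\theta_\alpha$ will come directly from the linear stability of the one-locus cline $\theta_\alpha$, and the uniform lower bound will then follow from Rayleigh-quotient continuity.

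First I would fix the functional-analytic setting: endow $L_\varphi=-\Delta-\la\alpha(1-2\varphi)$ with the Neumann domain $\{u\in H^2(\Omega):\partial_\nu u=0\}$. Since the potential $V_\varphi:=-\la\alpha(1-2\varphi)$ is real valued and bounded, $L_\varphi$ is self-adjoint on $L^2(\Omega)$ with compact resolvent, so $\sigma(L_\varphi)\subset\mathbb{R}$ consists of a discrete sequence $\mu_1(\varphi)\le\mu_2(\varphi)\le\cdots\to\infty$. The problem reduces to producing a $\delta_0>0$ with $\mu_1(\varphi)>\delta_0$ uniformly in a small $C(\bar\Omega)$-ball around $\theta_\alpha$.

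Second I would identify the principal eigenvalue at $\varphi=\theta_\alpha$ with the leading eigenvalue of the linearization of the single-locus equation $\partial_t\theta=\Delta\theta+\la\alpha\theta(1-\theta)$ (equation \eqref{eq:theta} with $h=\alpha$) at its stationary solution $\theta_\alpha$. Under assumption (A) and for $\la>\la_A$, Theorem~\ref{thm:singlelocus} guarantees that $\theta_\alpha$ is a linearly stable cline, and this is exactly the statement $\mu_1(\theta_\alpha)>0$. I then set $\delta_0:=\tfrac12\mu_1(\theta_\alpha)$.

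Finally I would close the argument by a Rayleigh-quotient comparison. Since
\begin{equation*}
	\mu_1(\varphi)=\inf_{u\in H^1(\Omega)\setminus\{0\}}\frac{\int_\Omega|\nabla u|^2\,dx+\int_\Omega V_\varphi\,u^2\,dx}{\int_\Omega u^2\,dx}
\end{equation*}
and $\|V_\varphi-V_{\theta_\alpha}\|_{C(\bar\Omega)}\le 2\la\|\alpha\|_{C(\bar\Omega)}\|\varphi-\theta_\alpha\|_{C(\bar\Omega)}$, the functional $\mu_1$ is Lipschitz continuous in the $C(\bar\Omega)$-topology. Choosing $\delta_1>0$ so that $2\la\|\alpha\|_{C(\bar\Omega)}\delta_1<\tfrac12\mu_1(\theta_\alpha)$ gives $\mu_1(\varphi)>\delta_0$ whenever $\|\varphi-\theta_\alpha\|_{C(\bar\Omega)}<\delta_1$, and hence $\sigma(L_\varphi)\subset[\mu_1(\varphi),\infty)\subset\{\textup{Re}\,z>\delta_0\}$. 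There is no genuine obstacle in the argument; the only bookkeeping point is that the spectrum appearing in \eqref{eq:spectrum} should be interpreted as the $L^2$-spectrum of the self-adjoint realization, which is forced by the compact resolvent and is the natural one used in the subsequent perturbation arguments of Theorem~\ref{thm:7.1}.
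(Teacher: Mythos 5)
Your proposal is correct and follows essentially the same route as the paper: linear stability of $\theta_\alpha$ for $\la>\la_A$ puts $\sigma(L_{\theta_\alpha})$ a positive distance to the right of zero, and the spectrum moves continuously under $C(\bar\Omega)$-perturbations of the potential. The paper simply cites upper semicontinuity of the spectrum in the coefficient $\ph$, whereas you make the same step quantitative via self-adjointness and the Lipschitz dependence of the Rayleigh quotient on $\|V_\ph-V_{\theta_\alpha}\|_{C(\bar\Omega)}$; this is a valid and slightly more explicit implementation of the identical idea.
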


\begin{proof}
Because $\la > \la_A$, the positive equilibrium $\theta_\alpha$ is linearly stable in the single-locus problem, i.e., there exists $\delta_0>0$ such that the operator $L_{\theta_\alpha}$
satisfies $\sigma(L_{\theta_\alpha}) \subset \{z \in \mathbb{C}: \textup{Re}\, z \geq 2\delta_0\}$. The lemma thus follows from upper semicontinuity of the spectrum of $L_\ph$ with respect to the coefficient $\ph \in C(\bar\Omega)$.
\end{proof}

\begin{lemma}\label{lem:U2}
Suppose $q_A$ is a solution of
\begin{subequations}
\begin{alignat}{2}
	&\partial_t q_A + L_\ph q_A  = F(x,t) &&\quad\text{ in } \Omega\times(t_0,\infty)\,, \\
	&\partial_\nu q_A = 0 &&\quad\text{ on } \partial\Omega \times (t_0,\infty) \,,
\end{alignat}
\end{subequations}
where $L_\ph$ satisfies \eqref{eq:spectrum} and   $F(x,t) \in C(\bar\Omega \times [t_0, \infty))$. Then there exists $C'>0$ (which depends on $L_\varphi$ but is independent of $F$) such that
\begin{equation}\label{tilde_qA:lemU2}
	\limsup_{t \to \infty} \|q_A(\cdot,t)\|_{C^1(\bar\Omega)} \leq C'\limsup_{t\to\infty} \|F(\cdot,t)\|_{C(\bar\Omega)}\,.
\end{equation}
\end{lemma}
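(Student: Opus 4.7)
The plan is to combine Duhamel's formula with the exponential decay of the analytic semigroup generated by $L_\ph$ on $C(\bar\Omega)$ with Neumann boundary conditions. Since $L_\ph = -\Delta - \la \alpha(1-2\ph)$ is a second-order uniformly elliptic operator with bounded coefficients, it generates an analytic semigroup $\{e^{-tL_\ph}\}_{t\geq 0}$ on $C(\bar\Omega)$. The spectral gap hypothesis \eqref{eq:spectrum} together with standard sectorial-operator theory (cf.\ Henry \cite{Henry1981} or Lunardi) yields exponential decay of the semigroup, and combining this with its smoothing property gives a constant $M>0$ (depending only on $L_\ph$) such that
\begin{equation}\label{eq:semigp-est}
	\|e^{-tL_\ph} u\|_{C^1(\bar\Omega)} \leq M\,t^{-1/2} e^{-\delta_0 t}\|u\|_{C(\bar\Omega)} \quad \text{for every } t>0 \text{ and } u \in C(\bar\Omega).
\end{equation}

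Next, I would apply Duhamel's formula to write
\begin{equation*}
	q_A(\cdot, t) = e^{-(t-t_0)L_\ph} q_A(\cdot, t_0) + \int_{t_0}^{t} e^{-(t-s)L_\ph} F(\cdot, s)\,ds\,.
\end{equation*}
The first term converges to $0$ in $C^1(\bar\Omega)$ as $t\to\infty$ by \eqref{eq:semigp-est}. To control the Duhamel integral under the $\limsup$, we may assume $L := \limsup_{t\to\infty}\|F(\cdot,t)\|_{C(\bar\Omega)} < \infty$ (otherwise the bound \eqref{tilde_qA:lemU2} is vacuous). Fix $\eta>0$ and choose $T\geq t_0$ such that $\|F(\cdot,s)\|_{C(\bar\Omega)} \leq L+\eta$ for all $s\geq T$. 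For $t\geq T+1$, split the integral as $\int_{t_0}^T + \int_T^t$; applying \eqref{eq:semigp-est} termwise gives
\begin{align*}
	\left\|\int_{t_0}^t e^{-(t-s)L_\ph} F(\cdot,s)\,ds\right\|_{C^1(\bar\Omega)}
	&\leq \sup_{s\in[t_0,T]}\|F(\cdot,s)\|_{C(\bar\Omega)} \int_{t_0}^T M(t-s)^{-1/2} e^{-\delta_0(t-s)}\,ds \\
	&\quad + (L+\eta)\int_T^t M(t-s)^{-1/2}e^{-\delta_0(t-s)}\,ds.
\end{align*}
The first piece decays to $0$ as $t\to\infty$ because $e^{-\delta_0(t-s)}\leq e^{-\delta_0(t-T)}\to 0$ uniformly in $s\in[t_0,T]$; the second piece is bounded by $(L+\eta)C'$, where
\begin{equation*}
	C' := \int_0^\infty M\,\tau^{-1/2} e^{-\delta_0\tau}\,d\tau = M\,\delta_0^{-1/2}\,\Ga(\tfrac12)
\end{equation*}
depends only on $L_\ph$. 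Taking $\limsup_{t\to\infty}$ and then sending $\eta\to 0$ yields \eqref{tilde_qA:lemU2}.

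The principal technical point is establishing the $C\to C^1$ smoothing estimate \eqref{eq:semigp-est}. This can be obtained from the sectoriality of $-\De$ on $C(\bar\Omega)$ under Neumann boundary conditions and interpolation between $C(\bar\Omega)$ and the domain of $L_\ph$ (of type $C^{2,\ga}$); alternatively, one can argue by parabolic $L^p$-theory on finite time slabs combined with Sobolev embedding (taking $p$ large), together with the exponential decay provided by the resolvent bound implied by \eqref{eq:spectrum}. The remaining steps---Duhamel representation, splitting the time integral, and passing to the $\limsup$---are then routine.
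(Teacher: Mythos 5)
Your proposal is correct and follows essentially the same route as the paper: variation of constants, the smoothing--decay estimate $\|e^{-tL_\ph}w\|_{C^1(\bar\Omega)}\le c\,t^{-\ga}e^{-\delta_0 t}\|w\|_{C(\bar\Omega)}$ (which the paper obtains from \cite[(2.3.3)]{Lunardi} and the embedding $D_{L_\ph}(\ga,\infty)\subseteq C^{1,2\ga-1}(\bar\Omega)$ for $\ga\in(\tfrac12,1)$, so strictly one takes an exponent $\ga>\tfrac12$ rather than exactly $\tfrac12$, an inessential difference), and then passage to the $\limsup$ in the Duhamel integral, for which you supply the splitting argument the paper leaves implicit.
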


\begin{proof}
By the variation-of-constants formula, we have
\begin{equation}\label{eq:voc}
	q_A(\cdot,t) = e^{-(t-t_0)L_\ph} q_A(\cdot,t_0) + \int_{t_0}^t e^{-(t-s) L_\ph} F(\cdot,s)\,ds  \quad \text{ for } t > t_0\,,
\end{equation}
where $e^{-t L_\ph}$ is the semigroup generated by $L_\ph$ under homogeneous Neumann boundary conditions. Using \eqref{eq:spectrum},  it is a consequence of \cite[(2.3.3)]{Lunardi} that for every $\ga\in(0,1)$ and $p\ge1$ there is a constant $c>0$ such that
\begin{equation*}
	 \|e^{-t L_\ph} w\|_{D_{L_\ph}(\ga,\infty)} \leq c t^{-\ga} e^{-\delta_0 t} \| w\|_{C(\bar\Omega)}   \quad \text{for all }t > 0\,,
\end{equation*}
where $D_{L_\varphi}(\gamma,\infty)$ is the real interpolation space between $C(\bar\Omega)$ and the domain $D(L_\ph)=\bigcap_{p\ge1} W^{2,p}(\Omega)$.
Because $D_{L_\varphi}(\gamma,\infty)\subseteq C^{1,2\gamma-1}(\bar\Omega)$ if $\gamma\in(\tfrac12,1)$ \cite[Theorem 3.1.30]{Lunardi}, we obtain
\begin{equation}\label{estimate_etLph}
	\|e^{-t L_\ph} w\|_{C^{1}(\bar\Omega)} \leq c t^{-\ga} e^{-\delta_0 t} \| w\|_{C(\bar\Omega)}   \quad \text{for all }t > 0\,.
\end{equation}
Applying \eqref{estimate_etLph} to \eqref{eq:voc}, we derive
$$
	\|q_A(\cdot,t)\|_{C^1(\bar\Omega)}
	\leq c(t-t_0)^{-\ga}e^{-\delta_0(t-t_0)}\| q_A(\cdot, t_0)\|_{C(\bar\Omega)}
		+ \int_{t_0}^t c (t-s)^{-\ga} e^{-\delta_0(t-s)}\|F(\cdot,s)\|_{C(\bar\Omega)}\, ds
$$
for $t > t_0 > 0$. Letting $t \to \infty$, we arrive at \eqref{tilde_qA:lemU2}.
\end{proof}

\begin{proposition}\label{prop:persistence}
\noindent {\rm (a)} If $\la>\la_A$, then for every trajectory $(p_A, p_B, D)$ of \eqref{eq:ABD_add} with initial data $p_A(\cdot,0) \notin\{0, 1\}$, we have
\begin{equation}\label{eq:prop2a}
	\limsup_{t \to\infty}  \left\|p_A(\cdot,t) - \theta_\alpha \right\|_{C^1(\bar\Omega)} = O(\epsilon) \quad \text{ as }\epsilon \to 0.
\end{equation}

\noindent {\rm (b)} If $\la >\la_B$, then for every trajectory $(p_A, p_B, D)$ of \eqref{eq:ABD_add}  with initial data $p_B(\cdot,0) \notin\{0, 1\}$, we have
\begin{equation}\label{eq:prop2b}
	\limsup_{t \to\infty}  \left\|p_B(\cdot,t) - \theta_\beta \right\|_{C^1(\bar\Omega)} = O(\epsilon)\quad \text{ as }\epsilon \to 0.
\end{equation}
\end{proposition}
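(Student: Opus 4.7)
The plan is to realize $q_A := p_A - \theta_\alpha$ as an $O(\epsilon)$-small inhomogeneous perturbation around the globally attracting single-locus cline, exploiting the fact that $D$ is already known to be $O(\epsilon)$ at large time by Lemma \ref{lem:7.4}(c). I will describe (a); part (b) follows by the symmetric exchange $\alpha \leftrightarrow \beta$, $A \leftrightarrow B$ (the equations \eqref{eq:ABD_a} and \eqref{eq:ABD_b} are symmetric under this swap). The argument splits naturally into a $C^0$-closeness step and a linear $C^1$-bootstrap.

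\emph{Stage 1 ($C^0$-closeness of $p_A$ to $\theta_\alpha$).} I would rewrite \eqref{eq:ABD_a} in multiplicative logistic form
\[
	\partial_t p_A = \Delta p_A + \la[\alpha(x) + \beta(x)\DA]\, p_A(1-p_A)\,,
\]
with $\DA$ as in \eqref{wAB}. By Lemma \ref{lem:7.4}(c), given any $\delta_2 > 0$ and all sufficiently small $\epsilon>0$, there exists $t_0 = t_0(\epsilon)$ such that $\|\beta \DA(\cdot,t)\|_{C(\bar\Omega)} \leq \delta_2$ for $t \geq t_0$. A standard parabolic comparison with the bracketing one-locus equations
\[
	\partial_t u = \Delta u + \la(\alpha \pm \delta_2)\, u(1-u)\,,\quad \partial_\nu u = 0\,,
\]
started at $t_0$ from the common datum $p_A(\cdot,t_0) \in C(\bar\Omega;(0,1))$ (which lies strictly in the interior by Lemma \ref{flow_into_interior} applied to $p_1+p_2$) sandwiches $p_A$ between the two solutions. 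For $\delta_2$ small, $\alpha\pm\delta_2$ still changes sign and $\la > \la_{\alpha\pm\delta_2}$, so by Theorem \ref{thm:singlelocus} each bracket converges uniformly to the cline $\theta_{\alpha\pm\delta_2}$, which in turn lies within any prescribed $C(\bar\Omega)$-neighbourhood of $\theta_\alpha$ (continuity of $\theta_h$ in $h$, a consequence of the nondegeneracy furnished by Lemma \ref{lem:U1} and the implicit function theorem). Hence, for any $\eta > 0$, choosing $\epsilon$ sufficiently small yields
\[
	\limsup_{t\to\infty}\|p_A(\cdot,t) - \theta_\alpha\|_{C(\bar\Omega)} \leq \eta\,.
\]

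\emph{Stage 2 (linear $C^1$-bootstrap to $O(\epsilon)$).} Setting $q_A := p_A - \theta_\alpha$ and using the identity $p_A(1-p_A) - \theta_\alpha(1-\theta_\alpha) = (1-2\theta_\alpha)q_A - q_A^2$, we see that $q_A$ solves
\[
	\partial_t q_A + L_{\theta_\alpha} q_A = -\la \alpha q_A^2 + \la \beta D \;\;\text{in } \Omega\times(0,\infty)\,,\quad \partial_\nu q_A = 0 \;\;\text{on }\partial\Omega\times(0,\infty)\,,
\]
with $L_{\theta_\alpha}$ as in \eqref{def_L_ph}. Lemma \ref{lem:U1} at $\varphi = \theta_\alpha$ verifies the spectral gap \eqref{eq:spectrum}, so Lemma \ref{lem:U2} gives
\[
	\limsup_{t\to\infty}\|q_A\|_{C^1(\bar\Omega)} \leq C'\Bigl[\la\|\alpha\|_{C(\bar\Omega)}(\limsup_{t\to\infty}\|q_A\|_{C(\bar\Omega)})^2 + \la\|\beta\|_{C(\bar\Omega)}\limsup_{t\to\infty}\|D\|_{C(\bar\Omega)}\Bigr]\,.
\]
Choose $\eta$ in Stage 1 so that $C'\la\|\alpha\|_{C(\bar\Omega)}\eta < \tfrac12$; then bounding $(\limsup_t\|q_A\|_C)^2 \leq \eta\,\limsup_t\|q_A\|_{C^1}$ and invoking Lemma \ref{lem:7.4}(c) for $\limsup_t\|D\|_C = O(\epsilon)$ absorbs the quadratic term into the left-hand side and yields $\limsup_t\|q_A\|_{C^1(\bar\Omega)} = O(\epsilon)$, which is \eqref{eq:prop2a}.

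\emph{Main obstacle.} The hard part is Stage 1: ruling out that $p_A$ is attracted by the (linearly unstable) boundary equilibria $0$ or $1$. What makes a direct argument work is that the perturbing forcing $\la\beta D$ in \eqref{eq:ABD_a} factors as $\la\beta\DA\cdot p_A(1-p_A)$; this preserves the logistic sign structure and turns the problem into a sandwich between two autonomous single-locus clines whose existence and global attractivity are already delivered by Theorem \ref{thm:singlelocus}, sparing us a full uniform persistence argument. Once $C^0$-closeness is secured, Stage 2 is essentially an application of the ready-made linear estimate in Lemma \ref{lem:U2}.
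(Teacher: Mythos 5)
Your proposal is correct and follows essentially the same route as the paper's proof: a $C^0$-closeness step obtained by sandwiching $p_A$ between the globally attracting clines $\theta_{\alpha\pm\eta}$ of the perturbed one-locus equations (using Lemma \ref{lem:7.4}(c) to make $|\beta \DA|$ eventually small), followed by the linear $C^1$-bootstrap for $q_A=p_A-\theta_\alpha$ via the spectral gap of $L_{\theta_\alpha}$ (Lemmas \ref{lem:U1} and \ref{lem:U2}), with the quadratic term absorbed by choosing the $C^0$-neighbourhood small enough relative to $C'\la\|\alpha\|_{C(\bar\Omega)}$. The only cosmetic difference is that you write the forcing as $\la\beta D$ while the paper writes $\la\beta\DA\, p_A(1-p_A)$, which are identical.
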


\begin{proof}
To prove (a), assume $\la>\la_A$. We may choose a constant $\delta_2>0$ sufficiently small such that
$$
	\la > \la_{\alpha + \delta}  \quad \text{ for all $\delta$ with } |\delta| \leq \delta_2\,,
$$
where $\la_{\alpha + \delta}$ is defined in \eqref{lambda_h}.
Then the single-locus equation
\begin{subequations}\label{eq:theta_time_upper2a}
\begin{alignat}{2}
		&\partial_t \theta - \Delta \theta = \la (\alpha+ \delta)\theta(1  -\theta) &&\quad\text{ in } \Omega\times(0,\infty)\,,\\
		&\partial_\nu \theta  =0 &&\quad\text{ on } \partial\Omega\times(0,\infty)
\end{alignat}
\end{subequations}
has a unique globally asymptotically stable equilibrium $\theta_{\alpha+\delta}$. 
Let $C'$ be given by Lemma \ref{lem:U2}, where $L_\varphi = L_{\theta_\alpha}$, and let
 $\delta' = \frac{1}{C' \|\la\alpha\|_{C(\bar\Omega)}}$. We claim that for each sufficiently small $\epsilon$, and any non-trivial initial condition,
\begin{equation}\label{eq:est_c0}
	\limsup_{t \to \infty} \|p_A(\cdot,t) - \theta_\alpha\|_{C(\bar\Omega)} < \delta'. 
\end{equation}

To prove \eqref{eq:est_c0}, let $\delta'>0$ be given as above. Since $\la >\la_A$, the  steady state $\theta_{\alpha+\delta}$ depends continuously on $\delta \in [-\delta_2,\delta_2]$, thus there exists $\eta  \in (0,\delta_2)$ (depending on $\delta'$) such that 
\begin{equation}\label{eq:iftift}
	\|\theta_{\alpha - \eta} - \theta_{\alpha}\|_{C(\bar\Omega)} + 		\|\theta_{\alpha + \eta} - \theta_{\alpha}\|_{C(\bar\Omega)} < \delta'.
\end{equation}
Next, fix $\epsilon>0$ small enough so that $4\epsilon C_0^2\|\beta\|_{C(\bar\Omega)} < \eta$ and \eqref{eq:epsilon_smaller} are satisfied (where $C_0$ is as in Lemma \ref{lem:7.4a}). Then,
by Lemma \ref{lem:7.4}, there exists $t_0>0$ such that 
$$
	\left|\frac{\beta D}{p_A(1-p_A)}\right| = \left| \beta D_A\right| \leq  \eta \quad\text{ in } \Omega\times[t_0,\infty)\,.
$$
In this case, $p_A$ satisfies
$$
	\la (\alpha - \eta)p_A(1-p_A) \le \partial_t p_A - \Delta p_A
\leq \la (\alpha + \eta)p_A(1-p_A) \quad\text{ in } \Omega\times[t_0,\infty) \,.
$$
Hence, by comparison and by the fact that $\theta_{\alpha\pm \eta}$ is the globally asymptotically stable equilibrium of \eqref{eq:theta_time_upper2a} with $\delta = \pm \eta$, respectively, we deduce that
$$
	\theta_{\alpha - \eta}(x) \leq \liminf_{t \to \infty} p_A(x,t) \leq \limsup_{t \to \infty} p_A(x,t) \leq \theta_{\alpha+ \eta}(x).
$$
Combining this with \eqref{eq:iftift}, we obtain
\begin{align*}
&\quad \limsup_{t \to \infty}\|p_A(\cdot,t)-\theta_\alpha\|_{C(\bar\Omega)} \\
 &\qquad\leq  \limsup_{t\to\infty} \left[\max_{x\in\bar\Omega}  (p_A(x,t) - \theta_\alpha(x))_+  \right]+  \limsup_{t\to\infty} \left[\max_{x\in\bar\Omega} (\theta_\alpha (x)- p_A(x,t))_+ \right] \\
&\qquad\leq\| \theta_{\alpha + \eta} - \theta_\alpha\|_{C(\bar\Omega)} +  \| \theta_{\alpha } - \theta_{\alpha - \eta}\|_{C(\bar\Omega)} < \delta'\,,
\end{align*}
which proves \eqref{eq:est_c0}.

Next, let $q_A(x,t) = p_A(x,t) - \theta_\alpha(x)$ and
$F(x,t)= -\la \alpha  (q_A)^2 + \la \beta \DA  p_A(1-p_A)$.
Then
\begin{equation}
	\partial_t q_A + L_{\theta_\alpha} q_A = F(x,t)\,,
\end{equation}
where
$L_{\theta_\alpha}$ is defined according to \eqref{def_L_ph}. Since $\la >\la_A$, the equilibrium $\theta_\alpha$ is linearly stable and thus $\sigma(L_{\theta_\alpha}) \subset \{z \in \mathbb{C}: \textup{Re}\, z > \delta_0\}$ for some $\delta_0>0$. Because
\eqref{eq:cor7.4b} entails $\limsup_{t\to\infty}\|F(\cdot,t)\|_{C(\bar\Omega)}= O(\epsilon) + \|\la \alpha\|_{C(\bar\Omega)} \limsup_{t\to\infty}\|q_A(\cdot,t)\|_{C(\bar\Omega)}^2$, we can invoke Lemma \ref{lem:U2} to deduce that for some constant $C'>0$ (the same as the one at the beginning of the proof), we have
\begin{equation}\label{eq:implies}
	\limsup_{t \to \infty} \|q_A(\cdot,t)\|_{C^1(\bar\Omega)} = C'\left[O(\epsilon) + \|\la \alpha\|_{C(\bar\Omega)}  \limsup_{t \to \infty} \|q_A(\cdot,t)\|^2_{C(\bar\Omega)} \right].
\end{equation}
By our choice of $\delta'=\frac{1}{2C' \|\la \alpha\|_{C(\bar\Omega)}}$ and  \eqref{eq:est_c0}, we have $$	C' \|\la \alpha\|_{C(\bar\Omega)} \limsup_{t \to \infty} \|q_A(\cdot,t)\|^2_{C(\bar\Omega)} \leq \frac{1}{2} \limsup_{t \to \infty} \|q_A(\cdot,t)\|_{C(\bar\Omega)}, $$
and  \eqref{eq:implies} yields
$$
\limsup_{t \to \infty} \|q_A(\cdot,t)\|_{C^1(\bar\Omega)} = O(\epsilon).
$$ 

This proves (a) for $\la>\la_A$.
The proof of (b) is analogous.
\end{proof}

We end this subsection with the proof of Theorem \ref{thm:7.1}(a).
\begin{proof}[Proof of Theorem \ref{thm:7.1}(a)]
Let $\la < \max\{\la_A,\la_B\}$. Without loss of generality, we assume $\la < \la_B$.
Then by Lemma \ref{lem:7.4c}(b) and Lemma \ref{lem:7.4}(a) we have
\begin{equation}\label{eq:LN2002}
	p_B(\cdot,t) \xrightarrow{t\to\infty} 0 \text{ or } 1 \;\text{ in } C^1(\bar\Om) \quad \text{and}\quad \DA \xrightarrow{t\to\infty} 0 \;\text{ in }C(\bar\Omega)\,,
\end{equation}
respectively.
Hence, equation \eqref{eq:ABD_a} for $p_A$  is asymptotic to \eqref{eq:theta} with $h=\alpha$.

Now, for \eqref{eq:theta} with $h = \alpha$, the equilibrium $\theta_\alpha$ is globally asymptotically stable (recall that $0<\theta_\alpha<1$ if $\la>\la_A$, and $\theta_\alpha \in\{0,1\}$ if $\la\le\la_A$). Any other equilibrium in $\{0,1\}$ is linearly unstable. For every given trajectory $\{p_A(\cdot,t)\}_{t \geq 0}$ of \eqref{eq:ABD_a}, the omega limit set $\omega_0$ is an internally chain-transitive set of the semiflow generated by the limiting equation \eqref{eq:theta} with $h=\alpha$. In particular, $\omega_0$ must be a singleton set containing one of the equilibria $\{0,\theta_\alpha,1\}$,  i.e., $p_A(\cdot,t)$ converges to one of the equilibria as $t\to\infty$.

To prove that $p_A(\cdot,t) \to \theta_\alpha$ in $C^1(\bar\Om)$, we consider the case $\bar{\alpha}>0$ first. If $0<\la<\la_A$, then $\theta_\alpha=1$ and $p_A(\cdot,t) \to 1$ follows from Lemma\,\ref{lem:7.4c}(a). If $\la>\la_A$, then $0<\theta_\alpha(x)<1$ on $\bar{\Omega}$
and Proposition\,\ref{prop:persistence}(a) excludes the possibility that $p_A(\cdot,t)\to 0$ or $1$ and thus leads to $p_A(\cdot,t) \to \theta_\alpha$. If $\la=\la_A$, then $\theta_\alpha=1$, and $0$ is linearly unstable as an equilibrium of \eqref{eq:theta} with $h=\alpha$. We rewrite the equation (\ref{eq:ABD_a}) for $p_A$ as
\begin{subequations}\label{eq:LN2002-2}
\begin{alignat}{2}
		&\partial_t p_A - \Delta p_A = \la\alpha(1-p_A)p_A + \la g(x,t) p_A &&\quad\text{ in } \Omega\times (0,\infty) \,,\\
		&\partial_\nu p_A = 0 	&&\quad\text{ on } \partial\Omega \times (0,\infty)\,,\\
		&p_A(x,0) \geq 0 \,\text{ and }\, p_A(x,0)\not\equiv 0 	&&\quad\text{ in } \bar\Omega\,,
\end{alignat}
\end{subequations}
where, by \eqref{eq:LN2002},
\begin{equation}
	g(x,t) = \beta(x)\DA(1-p_A(x,t)) \to 0 \text{ in } C(\bar\Omega) \text{ as } t\to \infty\,.
\end{equation}
Thus, we may apply \cite[Lemma 2.5]{LN2002} to deduce that $p_A(\cdot,t)\to 1$ in $C^1(\bar\Om)$ as $t\to\infty$. For each fixed $\epsilon$, the convergence of $(p_A, p_B, D)$ as $t \to\infty$ can in fact be improved to $[C^2(\bar\Omega)]^3$, via parabolic regularity.
This completes the proof of $p_A(\cdot,t) \to \theta_\alpha$ as $t\to\infty$.
Finally,  the proof for the case $\bar{\alpha}\le 0$ is similar and is omitted.
\end{proof}

\begin{remark}\rm 
Here is an alternative proof of Theorem \ref{thm:7.1}(a) without using the chain transitivity. As above, we consider the case $\bar{\alpha}>0$. If $0<\la\le\la_A$, we apply \cite[Lemma 2.5]{LN2002} to equation \eqref{eq:LN2002-2} and conclude that $p_A(\cdot,t)\to 1$ as $t\to\infty$. If $\la>\la_A$, we apply \cite[Lemma 2.5]{LN2002} to both $p_A$ and $(1-p_A)$ to obtain
\begin{equation*}
\liminf_{t\to\infty}p_A(x,t)\ge \theta_\alpha(x)
\end{equation*}
and
\begin{equation*}
\liminf_{t\to\infty}(1-p_A(x,t))\ge 1-\theta_\alpha(x)\,,\mbox{ \;i.e., \;} \limsup_{t\to\infty} p_A(x,t)\le \theta_\alpha(x)\,,
\end{equation*}
respectively. This implies $p_A(x,t)\to \theta_{\alpha}(x)$ pointwise as $t\to\infty$. By parabolic regularity and the Arzela-Ascoli Lemma, we infer that $p_A(x,t)\to \theta_{\alpha}(x)$ in $C^2(\bar\Omega)$, as in \cite[Theorem 2.1]{LN2002}.
\end{remark}

\begin{remark}\label{remark:thm7.1}\rm Based on Remarks \ref{remark:lem7.5}, \ref{remark:lem7.5b} and the proof of Theorem \ref{thm:7.1}(a), we observe that for every $\delta\in (0, \max\{\la_A,\la_B\})$ the $\epsilon_0$ in Theorem \ref{thm:7.1}(a) can be chosen independently of $\la\in (0,\max\{\la_A,\la_B\}-\delta]$.
\end{remark}

\subsection{Persistence results and existence of internal equilibrium}

For the rest of this paper, we treat the case $\la > \max\{\la_A, \la_B\}$, so that the single-locus problems at loci $\A$ and $\B$ admit linearly stable clines $\theta_\alpha$ and $\theta_\beta$, respectively (Theorem \ref{thm:singlelocus}). First, we will use persistence theory (e.g.\ \cite{ST}) to establish the existence of an internal equilibrium of the two-locus problem.

\begin{definition}
Let $\Phi:\mathbf{Y}\times[0,\infty) \to \mathbf{Y}$ be a semiflow.

\noindent {\rm (i)} $\Phi$ is point-dissipative if there exists $C>0$ independent of initial conditions $Q_0 \in \mathbf{Y}$ such that
\begin{equation}
	\limsup_{t \to \infty} \left\| \Phi_t(Q_0)\right\|_{\mathbf{Y}} \leq C\,.
\end{equation}

\noindent {\rm (ii)} $\Phi$ is eventually bounded on $\mathbf{Y}$ if $\bigcup_{t \geq t_0} \Phi_t(\mathbf{Y})$ is bounded for some $t_0 \geq 0$.

\noindent {\rm (iii)} $\Phi_t: \mathbf{Y} \to \mathbf{Y}$ is compact for given $t>0$ if $\Phi_t(B)$ is precompact for every bounded subset $B$ of $\mathbf{Y}$.
\end{definition}

\begin{proposition}\label{prop:2.2}
The system \eqref{eq:ABD_add} generates a semiflow $\Phi$ on $\mathbf{Y}$, i.e., for initial data $Q_0\in \mathbf{Y}$ and every $t \geq 0$, let $\Phi_t(Q_0)=(p_A(\cdot,t), p_B(\cdot,t), D(\cdot,t))$, where $(p_A,p_B,D)$ is the corresponding solution of \eqref{eq:ABD_add}.
Then $\Phi$ is {\rm (i)} point-dissipative, {\rm (ii)} eventually bounded on $\mathbf{Y}$, and {\rm (iii)} $\Phi_t: \mathbf{Y} \to \mathbf{Y}$ is compact for every $t >0$.
\end{proposition}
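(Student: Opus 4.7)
The plan is to leverage two facts already set up in the paper: first, that $\mathbf{Y}$ is a manifestly bounded subset of $[C(\bar\Omega)]^3$ (every $(v_1,v_2,v_3) \in \mathbf{Y}$ satisfies $0 \le v_1,v_2 \le 1$ and $|v_3|\le \tfrac14$); and second, that $\mathcal{T}:\mathbf{Y}\to\mathbf{X}$ is a homeomorphism, so the positive invariance of $\mathbf{X}$ established in Section~\ref{sec:basic_dynamics} transfers to $\Phi$ on $\mathbf{Y}$. From these two observations, properties (i) and (ii) are essentially tautological: for every $Q_0\in\mathbf{Y}$ and every $t \geq 0$, $\Phi_t(Q_0)\in\mathbf{Y}$, so $\|\Phi_t(Q_0)\|_{[C(\bar\Omega)]^3}\le 2$ uniformly, and $\bigcup_{t\ge0}\Phi_t(\mathbf{Y})\subseteq\mathbf{Y}$ is bounded.

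The substance is in (iii), which I would attack by a short bootstrap based on parabolic $L^p$-estimates. The difficulty, and the reason a naive application does not work, is the quasilinear-looking term $2\,\nabla p_A\cdot\nabla p_B$ in \eqref{eq:ABD_D}, which a priori prevents one from treating the $D$-equation as a semilinear equation with bounded forcing. The plan is therefore to handle $p_A, p_B$ first. Equations \eqref{eq:ABD_a} and \eqref{eq:ABD_b} are semilinear with nonlinearities $\la\alpha p_A(1-p_A)+\la\beta D$ and $\la\beta p_B(1-p_B)+\la\alpha D$ that are uniformly bounded on $\Omega\times[0,\infty)$ thanks to the invariance of $\mathbf{Y}$. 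Fix $t>0$. Interior/boundary $L^p$-parabolic estimates on $\Omega\times[t/2,t]$ with $p>n$, together with Sobolev embedding $W^{2,p}(\Omega)\hookrightarrow C^{1,\gamma}(\bar\Omega)$, yield a constant $C_1=C_1(t)$, independent of the initial datum in $\mathbf{Y}$, with
\begin{equation*}
	\|p_A(\cdot,t)\|_{C^{1,\gamma}(\bar\Omega)}+\|p_B(\cdot,t)\|_{C^{1,\gamma}(\bar\Omega)}\le C_1.
\end{equation*}

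With this in hand, the cross term $2\,\nabla p_A\cdot\nabla p_B$ is uniformly bounded in $C^{\gamma}$ on $\Omega\times[t/2,t]$, and the remaining coefficients in \eqref{eq:ABD_D} are uniformly bounded as well (noting that $\ep$ is fixed here, so $-(1/\ep)D$ is a bounded linear term). Treating \eqref{eq:ABD_D} as a semilinear parabolic equation with bounded forcing and applying $L^p$-estimates again gives $\|D(\cdot,t)\|_{W^{2,p}(\Omega)} \le C_2(t)$, hence $D(\cdot,t)\in C^{1,\gamma}(\bar\Omega)$ with norm bounded independently of the initial datum. Consequently $\Phi_t(\mathbf{Y})$ lies in a bounded subset of $[C^{1,\gamma}(\bar\Omega)]^3$, and Arzela--Ascoli yields precompactness in $[C(\bar\Omega)]^3$. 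Since an arbitrary bounded $B\subseteq\mathbf{Y}$ is contained in $\mathbf{Y}$, which is itself bounded, this shows $\Phi_t(B)$ is precompact and proves (iii).

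The main obstacle, as noted, is the gradient-product term in the $D$-equation; the bootstrap above circumvents it by first using the fact that $p_A,p_B$ satisfy genuinely semilinear equations whose RHS is a priori in $L^\infty$ via the invariance of $\mathbf{Y}$. I would emphasise that no use of the detailed asymptotics of Lemma~\ref{lem:7.4a} is needed: only the parabolic smoothing effect at a fixed positive time matters here, and all constants depend on $t$ but not on the choice of initial data in $\mathbf{Y}$.
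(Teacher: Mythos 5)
Your proposal is correct and follows essentially the same route as the paper's proof: (i) and (ii) via the boundedness and forward invariance of $\mathbf{Y}$ (transferred from $\mathbf{X}$ through the homeomorphism $\mathcal{T}$), and (iii) via the same two-step bootstrap — parabolic $L^p$-estimates plus Sobolev embedding give uniform $C^{1+\gamma}$ bounds on $(p_A,p_B)$ first, which then control the cross term $2\nabla p_A\cdot\nabla p_B$ so that the $D$-equation can be treated as semilinear with bounded forcing, yielding the compact embedding into $C(\bar\Omega)$. The only cosmetic difference is that the paper records explicitly that the resulting constant for $D$ depends on $\epsilon$ (through the coefficient $1/\epsilon$), which is harmless since $\epsilon$ is fixed, exactly as you note.
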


\begin{proof}
Because the map  $\mathcal{T}:\mathbf{Y}\to\mathbf{X}$ in \eqref{trafo_T} is a homeomorphism and $\mathbf{X}$ in \eqref{X} is forward invariant under the semiflow $\Psi$ generated by \eqref{dynamics_pi}, $\mathbf{Y}$ is forward invariant under $\Phi$. Therefore, $\Phi_t(Q_0)=(p_A,p_B,D)(\cdot,t)$ exists and remains in $\mathbf{Y}$ for all $t >0$. Since $\mathbf{Y}$ is a bounded set, $\Phi$ is point-dissipative and eventually bounded.

To prove (iii), we rewrite the first two equations of \eqref{eq:ABD_add} as
\begin{align*}
	\partial_t p_A - \Delta p_A = &F_A:= \la \alpha p_A(1- p_A) + \la \beta D\,, \\
	\partial_t p_B - \Delta p_B = &F_B:= \la \beta p_B(1- p_B) + \la \alpha D \,,
\end{align*}
and apply semigroup and regularity theory. For every $t_0 \geq \ta>0$, there exists $C>0$ independent of $\epsilon$ and initial data, such that
$$
\|(p_A,p_B)\|_{W^{2,1,p}(\Omega \times [t_0, t_0+\ta])} \leq C(\|(F_A,F_B)\|_{L^p(\bar\Omega \times [t_0-\ta, t_0+\ta])}+\|(p_A, p_B)\|_{L^p(\bar\Omega\times[t_0-\ta,t_0+\ta])})
$$
\cite[Theorem 7.35]{lieberman}, and the constant $C$ depends on $\min\{t_0, 1\}$ because we can take $\tau = \min\{\tfrac12t_0, \tfrac12\}$.
By Sobolev embedding, we deduce
\begin{align}\label{eq:regular}
	&\sup_{t \in [t_0, t_0+\tau]} \|p_A(\cdot,t), p_B(\cdot,t)\|_{C^{1+\gamma}(\bar\Omega)} \notag\\
	&\qquad\leq C''\left(\|(p_A, p_B)\|_{C(\bar\Omega \times [t_0-\ta, t_0+\ta])} + \|D(\cdot,t)\|_{C(\bar\Omega \times [t_0-\ta, t_0+\ta])} \right)
		\leq \frac{5}{4}C''\,,
\end{align}
where the last inequality follows from \eqref{constraints_twoloc}.

Similarly, for every $t_0 \geq \ta >0$ there is a constant $C_\epsilon$ independent of initial data, such that
$$
	\sup_{t_0 \leq t \leq  t_0 +\ta}\|D(\cdot,t)\|_{C^{1+\gamma}(\bar\Omega)} \leq  C_\epsilon.
$$
(Note that $C_\epsilon$ depends not only on $\min\{t_0,1\}$, as above, but also on $\epsilon$ because the coefficients in equation \eqref{eq:ABD_D} for $D$ depend on $\epsilon$.)
Therefore, $\Phi_t$ is a bounded mapping from $\mathbf{Y} \to \mathbf{Y} \cap C^{1+\gamma}(\bar\Omega; [0,1]^2\times[-\tfrac14,\tfrac14])$ for every $t \geq t_0$, i.e., there is a constant $M_t$, such that $\|\Phi_t(Q_0)\|_{C^{1+\ga}(\bar\Om)}\le M_t$ for all $Q_0\in\mathbf{Y}$. By the compactness of the embedding $ C^{1+\gamma}(\bar\Omega; [0,1]^2\times[-\tfrac14,\tfrac14]) \hookrightarrow C(\bar\Omega; [0,1]^2\times[-\tfrac14,\tfrac14])$ and because $t_0$ can be arbitrarily small, we deduce that
$\Phi_t: \mathbf{Y} \to \mathbf{Y}$ is compact for every $t >0$.
\end{proof}

\begin{corollary}\label{cor:attractor}
The semiflow $\Phi$ has a compact attractor $\mathcal{C}$ of $\mathbf{Y}$, i.e., $\textup{dist}(\Phi_t(\mathbf{Y}),\mathcal{C}) \to 0$ as $t \to \infty$.
\end{corollary}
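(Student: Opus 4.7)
The plan is to invoke a standard result from the theory of infinite-dimensional dynamical systems, which asserts that any semiflow on a complete metric space that is point-dissipative, eventually bounded, and asymptotically smooth (or, as here, eventually compact) admits a compact global attractor. See, for example, Theorem 2.26 of Hale's monograph or the corresponding statement in \cite{ST} which is already cited in this paper. Since Proposition \ref{prop:2.2} has already verified all three hypotheses for the semiflow $\Phi$ on $\mathbf{Y}$, the corollary will follow essentially by direct quotation.

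In more detail, I would proceed as follows. First, observe that $\mathbf{Y}$, equipped with the norm inherited from $C(\bar\Omega;\Reals^3)$, is a closed subset of a Banach space and hence a complete metric space. Second, combine parts (i) and (ii) of Proposition~\ref{prop:2.2} to produce a bounded absorbing set $B \subset \mathbf{Y}$: by eventual boundedness there is some $t_1 \geq 0$ with $\bigcup_{t \geq t_1} \Phi_t(\mathbf{Y})$ bounded, and by point-dissipativity every trajectory eventually enters the ball of radius $C$ around the origin (intersected with $\mathbf{Y}$). Third, by part (iii) of Proposition~\ref{prop:2.2}, $\Phi_{t_1}(B)$ is precompact, so $\Phi$ is eventually compact (equivalently, asymptotically smooth).

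With these three ingredients in place, the standard theorem yields a nonempty compact invariant set
\begin{equation}
\mathcal{C} \;=\; \bigcap_{t \geq 0}\, \overline{\bigcup_{s \geq t} \Phi_s(B)}
\end{equation}
that attracts every bounded subset of $\mathbf{Y}$; in particular $\mathrm{dist}(\Phi_t(\mathbf{Y}), \mathcal{C}) \to 0$ as $t \to \infty$, since $\mathbf{Y}$ itself is bounded in $C(\bar\Omega;\Reals^3)$ (each component is uniformly bounded by the constraints \eqref{constraints_twoloc}).

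I do not expect any real obstacle here: all the analytic work has been done in establishing compactness of $\Phi_t$ via parabolic $L^p$-regularity and Sobolev embedding in Proposition~\ref{prop:2.2}. The only minor point worth flagging is that $\mathbf{Y}$ is not open but is a closed, forward-invariant subset of the ambient Banach space; however, the attractor theorem is typically formulated precisely for semiflows on complete metric spaces, so this causes no difficulty.
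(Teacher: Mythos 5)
Your proposal is correct and follows essentially the same route as the paper: both reduce the corollary to the standard attractor-existence theorem for point-dissipative, eventually bounded, eventually compact semiflows (the paper cites \cite[Theorem 2.30 and Remark 2.26(b)]{ST}) and note that all three hypotheses were verified in Proposition~\ref{prop:2.2}. The extra details you supply (completeness of $\mathbf{Y}$, the absorbing set, the explicit $\omega$-limit construction of $\mathcal{C}$) are consistent with, and merely elaborate on, the paper's one-line citation.
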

\begin{proof}
By \cite[Theorem 2.30 and Remark 2.26(b)]{ST}, it is sufficient to verify that the semiflow $\Phi$ is (i) point-dissipative, (ii) eventually bounded on $\mathbf{Y}$, and (iii) $\Phi_t:\mathbf{Y} \to \mathbf{Y}$ is compact for some $t>0$. These have been shown in Proposition \ref{prop:2.2}.
\end{proof}

\begin{definition}
{\rm (i)} Define the function $\kappa: \mathbf{Y} \rightarrow [0,\infty)$ by
\begin{equation}
	\kappa(v_1,v_2,v_3):= \inf_{x \in \Omega}  \left[\min\left\{ v_1(x), 1-v_1(x), v_2(x), 1-v_2(x) \right\}\right]\,.
\end{equation}

\noindent {\rm (ii)} We call the semiflow $\Phi$ uniformly $\kappa$-persistent, if there exists $\delta_0>0$ independent of initial condition $Q_0 \in \mathbf{Y} \setminus \mathbf{Y}_0$ such that
$$
	\liminf_{t \to \infty} \kappa(\Phi_t(Q_0)) = \liminf_{t \to \infty} \left[\inf_{x \in \Omega} \min\left\{p_A(x,t), 1-p_A(x,t), p_B(x,t), 1-p_B(x,t) \right\}\right] \geq \delta_0\,.
$$
\end{definition}

The function $\kappa$ is continuous and, by Lemma~\ref{flow_into_interior}, satisfies $\kappa(p_A(\cdot,t), p_B(\cdot,t), D(\cdot,t))>0$ for $t>0$ if either
$$
	\kappa(p_A(\cdot,0), p_B(\cdot,0), D(\cdot,0))>0
$$
or
$$
	\kappa(p_A(\cdot,0), p_B(\cdot,0), D(\cdot,0))=0 \; \text{ and } \; (p_A(\cdot,0), p_B(\cdot,0), D(\cdot,0)) \in \mathbf{Y}\setminus\mathbf{Y}_0\,.
$$

In the following, we apply standard results from persistence theory to prove the existence of at least one internal equilibrium. Any such equilibrium will satisfy \eqref{eps_estimate_(pA,pB,D)}.

\begin{corollary}\label{cor:existence}
Suppose  $\la > \max\{\la_A, \la_B\}$. Then for every sufficiently small $\epsilon>0$, the system \eqref{eq:ABD_add} has an internal equilibrium, i.e., there exists $(\hat{p}_A, \hat{p}_B, \hat{D}) = (\hat{p}_A^{(\ep)}, \hat{p}_B^{(\ep)}, \hat{D}^{(\ep)})$ in the interior of $\mathbf{Y}$, such that $\kappa(\hat{p}_A, \hat{p}_B, \hat{D}) >0$ and $\Phi_t(\hat{p}_A, \hat{p}_B, \hat{D}) = (\hat{p}_A, \hat{p}_B, \hat{D})$ for all $t \geq 0$.  Moreover,
\begin{equation}\label{eq:corexistence}
	\left\|  (\hat{p}_A, \hat{p}_B) - (\theta_\alpha, \theta_\beta) \right\|_{C^1(\bar\Omega)} + \| \hat{D} \|_{C(\bar\Omega)} = O(\epsilon)
\end{equation}
as $\ep\to0$.
\end{corollary}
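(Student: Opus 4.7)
The plan is to obtain the interior equilibrium via persistence theory and then read off \eqref{eq:corexistence} from Proposition~\ref{prop:persistence}. Given Proposition~\ref{prop:2.2} and Corollary~\ref{cor:attractor}, I would invoke a standard result from persistence theory (cf.\ \cite{ST}), which produces an equilibrium with $\kappa>0$ as soon as (i) $\mathbf{Y}_0$ is forward invariant, (ii) every orbit starting on $\mathbf{Y}_0$ has its $\omega$-limit set contained in a finite, acyclic union of isolated compact invariant subsets of $\mathbf{Y}_0$, and (iii) each such subset is a weak uniform repeller for orbits in $\mathbf{Y}\setminus\mathbf{Y}_0$. Once such an equilibrium $(\hat p_A^{(\epsilon)},\hat p_B^{(\epsilon)},\hat D^{(\epsilon)})$ is produced, the estimate \eqref{eq:corexistence} follows at once by applying Proposition~\ref{prop:persistence}(a)(b) to the constant trajectory at that equilibrium.

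Condition (i) is the forward invariance noted in Section~\ref{sec:basic_dynamics}. For (ii), on any face of $\mathbf{Y}_0$ (say, $p_A\equiv 0$), the constraint \eqref{constraint_D} forces $D\equiv 0$ and the system \eqref{eq:ABD_add} collapses to the single-locus equation \eqref{eq:theta} for $p_B$ with weight $\beta$. Since $\la > \la_B$, Theorem~\ref{thm:singlelocus} shows that each orbit on this face converges in $C^2(\bar\Omega)$ to one of the equilibria
\[
    \mathbf{E}_0 = \{M_1,M_2,M_3,M_4,(0,\theta_\beta,0),(1,\theta_\beta,0),(\theta_\alpha,0,0),(\theta_\alpha,1,0)\},
\]
and the natural partial order on $\mathbf{E}_0$ (with the vertices acting as sources on each face and the single-locus clines as sinks) precludes cycles.

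The main obstacle is condition (iii), which must be verified with a constant $\delta_0$ uniform in $\epsilon$. Consider an orbit in $\mathbf{Y}\setminus\mathbf{Y}_0$ clustering near the face $\{p_A\equiv 0\}$. By Lemma~\ref{lem:7.4}(c) and Remark~\ref{remark:lem7.5}, $|\DA(\cdot,t)|\le 4\epsilon C_0^2$ for all $t$ large, whence $|\la\beta D|\le c_0\epsilon\,p_A(1-p_A)$ with $c_0$ independent of $\epsilon$. Substituting into \eqref{eq:ABD_a} yields
\[
    \partial_t p_A - \Delta p_A \;\ge\; \la\left[\alpha(x) - c_0\epsilon/\la\right]p_A(1-p_A) \quad \text{in } \Omega\times(t_0,\infty)
\]
for some $t_0$. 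Because $\la > \la_A$ and the critical value $\la_h$ in \eqref{lambda_h} depends continuously on $h\in C(\bar\Omega)$, the right-hand side is the nonlinearity of a single-locus problem whose global attractor $\theta_{\alpha-c_0\epsilon/\la}$ is positive and, for small $\epsilon$, converges uniformly to $\theta_\alpha$, hence is bounded below by $\delta_0 := \tfrac12\min_{\bar\Omega}\theta_\alpha>0$ independently of $\epsilon$. A parabolic comparison argument then forces $\liminf_{t\to\infty}p_A\ge\delta_0$ uniformly in initial data, contradicting clustering near $\{p_A\equiv 0\}$. Symmetric arguments handle $1-p_A$, $p_B$, and $1-p_B$, and thus each element of $\mathbf{E}_0$, delivering condition (iii) with a common $\delta_0$ for all sufficiently small $\epsilon$, which completes the plan.
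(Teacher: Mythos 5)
Your overall plan --- persistence theory to produce an interior equilibrium, then Proposition~\ref{prop:persistence} applied to the constant trajectory to obtain \eqref{eq:corexistence} --- is the right one, and the final step coincides with the paper's. But there is a genuine gap in the middle. Conditions (i)--(iii) of your proposal (invariance of $\mathbf{Y}_0$, an acyclic covering of the boundary $\omega$-limit sets, weak uniform repulsion) are the hypotheses of the \emph{uniform persistence} theorems in \cite{ST}; no standard result produces an \emph{equilibrium} from them alone. The step from uniform $\kappa$-persistence to a fixed point with $\kappa>0$ is \cite[Theorem 6.2]{ST} (or its relatives), and that theorem requires the state space to be a closed \emph{convex} subset of a Banach space and the persistence functional to be \emph{concave}. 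The set $\mathbf{Y}$, on which you work throughout, is not convex: $(\tfrac12,\tfrac12,\tfrac14)$ and $(0,1,0)$ both lie in $\mathbf{Y}$, but their midpoint $(\tfrac14,\tfrac34,\tfrac18)$ violates \eqref{constraint_D}, since there $\min\{v_1(1-v_2),(1-v_1)v_2\}=\tfrac1{16}<\tfrac18$. This is precisely why the paper conjugates by the homeomorphism $\mathcal{T}$ and applies \cite[Theorem 6.2]{ST} to the semiflow $\Psi$ on $\mathbf{X}$, which \emph{is} closed and convex, using the functional $\kappa'=\kappa\circ\mathcal{T}^{-1}$, which is concave because it is an infimum of affine functionals of $(u_1,u_2,u_3,u_4)$. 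Without this transfer your argument does not close.

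A secondary remark on the route: the acyclicity/weak-repeller machinery you set up to establish uniform persistence is unnecessary here. Proposition~\ref{prop:persistence} already gives, for every trajectory with data in $\mathbf{Y}\setminus\mathbf{Y}_0$, $\limsup_{t\to\infty}\|p_A(\cdot,t)-\theta_\alpha\|_{C^1(\bar\Omega)}=O(\ep)$ and likewise for $p_B$; since $0<\theta_\alpha<1$ and $0<\theta_\beta<1$ on $\bar\Omega$ when $\la>\max\{\la_A,\la_B\}$, this yields uniform $\kappa$-persistence directly, with a lower bound independent of the initial data (and of $\ep$ small). Your verification of (iii) essentially re-proves the comparison step already contained in the proof of Proposition~\ref{prop:persistence}, and your boundary analysis in (ii), while correct as far as it goes ($D\equiv0$ on each face, reduction to the single-locus equation, absence of cycles), becomes superfluous once that proposition is invoked.
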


\begin{proof}
We recall that the semiflow $\Phi$ on $\mathbf{Y}$ is equivalent to the semiflow $\Psi$ on $\mathbf{X}$ via the relation
$\Phi_t = \mathcal{T}^{-1} \circ \Psi_t \circ \mathcal{T}$, where $\mathbf{X}$ is given in $\eqref{X}$, and $\mathcal{T}(p_A, p_B,D) = (p_1,p_2,p_3,p_4)$ is given in \eqref{trafo_T}.
If we define $\kappa': \mathbf{X} \rightarrow [0,\infty)$ by
$$
	\kappa'(u_1,u_2,u_3,u_4)=  \inf_{x \in \Omega} \left[ \min\left\{  u_1+u_2, 1-  u_1-u_2, u_1+u_3, 1- u_1-u_3 \right\}  \right]\,,
$$
then $\kappa' = \kappa\circ \mathcal{T}^{-1}$.

For every fixed, sufficiently small $\epsilon$, we observe that (i) the semiflow $\Psi$  is uniformly $\kappa'$-persistent (because $\Phi$ is uniformly $\kappa$-persistent by Proposition \ref{prop:persistence}); (ii) $\Psi_t: \mathbf{X} \to \mathbf{X}$  is compact, hence condensing, for every $t >0 $ (because $\Phi_t: \mathbf{Y} \to \mathbf{Y}$  is compact for every $t>0$ by Proposition \ref{prop:2.2}, and $\mathcal{T}:\mathbf{Y} \to \mathbf{X}$ is a homeomorphism); and (iii) $\Psi$ has a compact attractor in $\mathbf{X}$ (because $\Phi$ has a compact attractor in $\mathbf{Y}$ by Corollary \ref{cor:attractor}), which shows that $\Psi$ has a compact attractor of neighborhoods of compact sets.

Observe in addition that
\begin{itemize}

\item $\mathbf{X}$ is a closed convex subset of the Banach space $C(\bar\Omega; \mathbb{R}^4)$.

\item $\kappa': \mathbf{X} \to \mathbb{R}_+$ is continuous and concave, where concave means
$$
	\kappa'(\la Q_1 + (1-\la) Q_2) \geq \la \kappa'(Q_1) + (1-\la) \kappa'(Q_2)
$$
for all $\la \in [0,1]$ and $Q_1, Q_2 \in \mathbf{X}$.
\end{itemize}

Therefore, the existence of an equilibrium $(\hat{p}_1,\hat{p}_2, \hat{p}_3,\hat{p}_4)$ satisfying $\kappa'(\hat{p}_1,\hat{p}_2, \hat{p}_3,\hat{p}_4) >0$ follows from  \cite[Theorem 6.2]{ST}. Hence, $(\hat{p}_A,\hat{p}_B,\hat{D}) := \mathcal{T}^{-1} (\hat{p}_1,\hat{p}_2, \hat{p}_3,\hat{p}_4)$
is an equilibrium of the semiflow $\Phi$ associated with \eqref{eq:ABD_add}. Because
$$
\kappa(\hat{p}_A,\hat{p}_B,\hat{D}) = \kappa'(\hat{p}_1,\hat{p}_2, \hat{p}_3,\hat{p}_4) >0\,,
$$
$(\hat{p}_A,\hat{p}_B,\hat{D})$ is an internal equilibrium of \eqref{eq:ABD_add}.  Finally, \eqref{eq:corexistence} follows from \eqref{eq:prop2a}, \eqref{eq:prop2b}, and \eqref{eq:cor7.4b}.
\end{proof}

\subsection{Global asymptotic stability}

Let $\la > \max\{\la_A, \la_B\}$ and let $(\hat{p}_A, \hat{p}_B, \hat{D})$ be an internal equilibrium given by Corollary \ref{cor:existence}, we will show that  it attracts all trajectories initiating in $\mathbf{Y} \setminus \mathbf{Y}_0$. This in particular implies the uniqueness of the internal equilibrium. Part (b) of Theorem \ref{thm:7.1} is an immediate consequence of Corollary~\ref{cor:existence} and the following proposition.

\begin{proposition}\label{prop:final}
Let $\la > \max\{\la_A, \la_B\}$.
For every sufficiently small $\epsilon>0$, the internal equilibrium $(\hat{p}_A, \hat{p}_B, \hat{D})$ attracts all trajectories initiating in $\mathbf{Y} \setminus \mathbf{Y}_0$, where convergence occurs in
$[C^2(\bar\Om)]^3$.
In particular, $(\hat{p}_A, \hat{p}_B, \hat{D})$ is the unique internal equilibrium of \eqref{eq:ABD_add}.
\end{proposition}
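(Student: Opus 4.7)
The plan is to combine the global $O(\epsilon)$-attraction to $(\theta_\alpha,\theta_\beta,0)$ established in the previous subsection with a local linear stability analysis of $(\hat p_A^{(\ep)},\hat p_B^{(\ep)},\hat D^{(\ep)})$. The latter exploits the singular damping $-\tfrac{1}{\epsilon}\mathrm{Id}$ in the $D$-equation, which slaves $d$ to $(q_A,q_B)$ at leading order, so that the reduced dynamics on the allele frequencies is a small perturbation of the decoupled, stable one-locus problems.

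\emph{Step 1 (entry into a small tube).} For any trajectory with initial datum in $\mathbf{Y}\setminus\mathbf{Y}_0$, Lemma~\ref{flow_into_interior} transported through the homeomorphism $\mathcal{T}$ gives $p_A(\cdot,t),p_B(\cdot,t)\notin\{0,1\}$ for $t>0$. Hence Proposition~\ref{prop:persistence} applies and yields
\[
\limsup_{t\to\infty}\bigl[\|p_A(\cdot,t)-\theta_\alpha\|_{C^1(\bar\Omega)}+\|p_B(\cdot,t)-\theta_\beta\|_{C^1(\bar\Omega)}\bigr]=O(\epsilon).
\]
Combined with Lemma~\ref{lem:7.4}(c) and $|D|\leq p_A(1-p_A)|\DA|$, this shows that every trajectory eventually enters a tube $N_\epsilon$ of radius $O(\epsilon)$ around $(\theta_\alpha,\theta_\beta,0)$ in $[C^1(\bar\Omega)]^2\times C(\bar\Omega)$; by Corollary~\ref{cor:existence} the equilibrium $(\hat p_A,\hat p_B,\hat D)$ itself lies in $N_\epsilon$.

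\emph{Step 2 (linear stability of the internal equilibrium).} Linearising \eqref{eq:ABD_add} at $(\hat p_A,\hat p_B,\hat D)$ produces an operator $\mathcal{A}_\epsilon$ on $(q_A,q_B,d)$ with Neumann data whose diagonal blocks are $\Delta+\la\alpha(1-2\hat p_A)$, $\Delta+\la\beta(1-2\hat p_B)$ and $\Delta+\la[\alpha(1-2\hat p_A)+\beta(1-2\hat p_B)]-\tfrac{1}{\epsilon}$; the off-diagonal couplings are the zero-order terms $\la\beta d$ and $\la\alpha d$, the first-order terms $2\nabla\hat p_A\cdot\nabla q_B+2\nabla q_A\cdot\nabla\hat p_B$ arising from differentiating $2\nabla p_A\cdot\nabla p_B$, and zero-order contributions proportional to $\hat D=O(\epsilon)$. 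Because $\hat p_A\to\theta_\alpha$ and $\hat p_B\to\theta_\beta$ in $C(\bar\Omega)$, Lemma~\ref{lem:U1} (applied at both loci) yields some $\delta_0>0$ independent of $\epsilon$ such that the first two diagonal blocks have spectra in $\{\mathrm{Re}\,z<-\delta_0\}$. For any candidate eigenvalue $\mu$ of $\mathcal{A}_\epsilon$ with $\mathrm{Re}\,\mu\geq-\delta_0/2$, the dominant $-\tfrac{1}{\epsilon}$ term makes the $d$-equation invertible and gives the slaving estimate $\|d\|_{C(\bar\Omega)}=O(\epsilon)\bigl(\|q_A\|_{C^1(\bar\Omega)}+\|q_B\|_{C^1(\bar\Omega)}\bigr)$; substituting back shows that the $(q_A,q_B)$-block is an $O(\epsilon)$-perturbation of a stable decoupled pair, so upper-semicontinuity of the spectrum places $\sigma(\mathcal{A}_\epsilon)\subset\{\mathrm{Re}\,z<-\delta_0/2\}$ for all sufficiently small $\epsilon$.

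\emph{Step 3 (global conclusion and regularity).} Linear stability, together with the variation-of-constants estimate of Lemma~\ref{lem:U2} adapted to the coupled system, produces a neighbourhood $U$ of $(\hat p_A,\hat p_B,\hat D)$ that is exponentially attracted in $[C^1(\bar\Omega)]^2\times C(\bar\Omega)$. Shrinking $\epsilon$ so that $N_\epsilon\subset U$ yields global attraction on $\mathbf{Y}\setminus\mathbf{Y}_0$, and uniqueness follows at once because any alternative internal equilibrium would also be trapped in $N_\epsilon\subset U$. The upgrade from $[C^1(\bar\Omega)]^2\times C(\bar\Omega)$ to $[C^2(\bar\Omega)]^3$ convergence is routine via parabolic Schauder estimates, as in the proof of Theorem~\ref{thm:singlelocus}. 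The principal obstacle I expect is Step 2: the $-\tfrac{1}{\epsilon}$ damping is a singular perturbation and the nonstandard gradient coupling $2\nabla\hat p_A\cdot\nabla q_B+2\nabla q_A\cdot\nabla\hat p_B$ forces the slaving of $d$ to $(q_A,q_B)$ to be established through uniform resolvent estimates rather than by formal elimination.
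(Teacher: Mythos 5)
Your strategy is sound but genuinely different from the paper's. You propose the classical route: prove linear stability of $(\hat p_A,\hat p_B,\hat D)$ by a slow--fast spectral analysis of the full coupled linearization $\mathcal{A}_\epsilon$, obtain a local basin $U$, and then use the global $O(\epsilon)$-attraction to $(\theta_\alpha,\theta_\beta,0)$ to push every trajectory into $U$. The paper never linearizes the coupled system. It works with the nonlinear differences $(\tilde p_A,\tilde p_B,\tilde D)=(p_A-\hat p_A,\,p_B-\hat p_B,\,D-\hat D)$, uses Lemma \ref{lem:U1} and the variation-of-constants estimate of Lemma \ref{lem:U2} only for the two \emph{decoupled} scalar operators $L_{\hat p_A}$, $L_{\hat p_B}$ (treating the coupling through $\tilde D$ as a forcing term), and controls $\tilde D$ separately by a maximum-principle differential inequality for $\tilde D^*(t)=\max\{\sup_x\tilde D,0\}$, yielding $\limsup_t\|\tilde D\|_{C(\bar\Omega)}\le C_2\epsilon\limsup_t\|(\tilde p_A,\tilde p_B)\|_{C^1(\bar\Omega)}$. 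Substituting back produces the self-improving bound $\limsup_t\|(\tilde p_A,\tilde p_B)\|_{C^1}\le C_4\epsilon\,\limsup_t\|(\tilde p_A,\tilde p_B)\|_{C^1}$, which forces the limsup to vanish for $\epsilon<1/C_4$; no spectral information about $\mathcal{A}_\epsilon$ is ever needed, and uniqueness and global attraction come out in one stroke.

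The trade-off is exactly where you located it. Your Step 2 requires uniform-in-$\epsilon$ resolvent estimates for the $d$-block in the presence of the gradient coupling $2\nabla p_B\cdot\nabla q_A+2\nabla\hat p_A\cdot\nabla q_B$, and you must choose a norm (something like $C^1\times C^1\times C$, or the interpolation spaces already used in Lemma \ref{lem:U2}) in which the slaving $\|d\|=O(\epsilon)(\|q_A\|_{C^1}+\|q_B\|_{C^1})$ closes and in which the semigroup bound $\|e^{-t\mathcal{A}_\epsilon}\|\le Me^{-\delta_0 t/2}$ holds with $M$ independent of $\epsilon$; the latter uniformity is also what guarantees that the basin $U$ does not shrink faster than the $O(\epsilon)$ tube $N_\epsilon$, which your Step 3 silently assumes. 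None of this is insurmountable, but it is precisely the technical overhead the paper's limsup bootstrap is engineered to avoid: the paper replaces ``spectrum of the coupled operator'' by ``a smallness factor $\epsilon$ in front of a limsup,'' which is both shorter and immune to the norm-matching issues raised by the gradient coupling. If you carry out your route, the payoff is a genuinely stronger conclusion (linearized stability of the internal equilibrium with a uniform spectral gap), which the paper's argument does not deliver.
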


To prepare for the proof of Proposition \ref{prop:final}, we define
$$
(\tilde{p}_A(x,t), \tilde{p}_B(x,t), \tilde{D}(x,t)):= (p_A(x,t) - \hat{p}_A(x), p_B(x,t) - \hat{p}_B(x), D(x,t) - \hat{D}(x)).
$$
If $\la > \max\{\la_A, \la_B\}$, then by Proposition~\ref{prop:persistence}, Remark~\ref{rem:D_to_O(eps)}, and Corollary~\ref{cor:existence}, there exist $C_1>0$ and $\ep_1>0$ such that
\begin{equation}\label{eq:ultimate}
\limsup_{t \to \infty}\left[  \left\|(\tilde{p}_A, \tilde{p}_B)(\cdot,t) \right\|_{C^1(\bar\Omega)}  + \|\tilde{D}(\cdot,t) \|_{C(\bar\Omega)} \right]\leq C_1 \epsilon
\end{equation}
for every $\ep\in(0,\ep_1]$.
Furthermore, observe that $(\tilde{p}_A(x,t), \tilde{p}_B(x,t), \tilde{D}(x,t))$ satisfies
\begin{subequations}\label{eq:subtractp}
\begin{alignat}{3}
		&\partial_t \tilde{p}_A - \Delta \tilde{p}_A - \la \alpha(x) (1- 2\hat{p}_A(x))\tilde{p}_A
		 = -\la\alpha(x)(\tilde{p}_A)^2+ \la \beta(x) \tilde{D} &&\quad\text{ in } \Omega\times(0,\infty)\,, \\
		&\partial_t \tilde{p}_B - \Delta \tilde{p}_B - \la \beta(x) (1- 2\hat{p}_B(x))\tilde{p}_B = -\la \beta(x)(\tilde{p}_B)^2+  \la\alpha(x) \tilde{D} &&\quad\text{ in } \Omega\times(0,\infty)\,, \\
		&\partial_\nu \tilde{p}_A = \partial_\nu \tilde{p}_B = 0 &&\quad\text{ on } \partial\Omega\times(0,\infty)\,,
\end{alignat}
\end{subequations}
and
\begin{subequations}
\begin{alignat}{2}
	&\partial_t \tilde{D} - \Delta \tilde{D}  -\la \left[ \alpha (1-2p_A) + \beta (1-2p_B)\right]\tilde{D} + \frac{1}{\epsilon}\tilde{D}  \notag  \\
 &\qquad =  2\nabla p_B \cdot \nabla \tilde{p}_A + 2 \nabla \hat{p}_A \cdot \nabla \tilde{p}_B - 2\la \alpha \hat{D} \tilde{p}_A - 2\la \beta \hat{D} \tilde{p}_B &&\quad\text{in } \Omega\times(0,\infty)\,, \label{PDE_tilde D} \\
&\partial_\nu \tilde{D} = 0 &&\quad\text{on } \Omega\times(0,\infty)\,.
\end{alignat}
\end{subequations}

\begin{lemma}
Let $\la > \max\{\la_A, \la_B\}$. Then there exists $C_2>0$ such that
\begin{equation}\label{eq:step3b}
	\limsup_{t \to \infty} \|\tilde{D}(\cdot,t)\|_{C(\bar\Omega)}  \leq \epsilon C_2 \limsup_{t \to \infty} \|(\tilde{p}_A(\cdot,t), \tilde{p}_B(\cdot,t))\|_{C^1(\bar\Omega)}
\end{equation}
for every $\ep \le \min\{\ep_1,\ep_2\}$, where $\epsilon_1$ is associated with \eqref{eq:ultimate} and $\epsilon_2$ is chosen such that
\begin{equation}
	\frac{1}{2\epsilon_2} =  \la\sup_{x \in \Omega} \left( |\alpha(x)| + |\beta(x)|\right) \geq \la  \sup_{x \in \Omega} |\alpha(x)(1-2p_A(x)) + \beta(x)(1-2p_B(x))| \,. \label{1/eps_bound2}
\end{equation}
\end{lemma}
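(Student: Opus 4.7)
\medskip

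\noindent\textbf{Proof plan.} The strategy is to mimic the maximum principle/supersolution argument already used in the proof of Lemma \ref{lem:7.4} to estimate $\DA$. The PDE for $\tilde D$ has the form
\[
\partial_t\tilde D -\Delta\tilde D + \Bigl(\tfrac{1}{\ep} - \la[\alpha(1-2p_A)+\be(1-2p_B)]\Bigr)\tilde D \;=\; F(x,t)\,,
\]
with homogeneous Neumann data, where $F:=2\nabla p_B\cdot\nabla\tilde p_A + 2\nabla\hat p_A\cdot\nabla\tilde p_B - 2\la\alpha\hat D\tilde p_A - 2\la\be\hat D\tilde p_B$. The condition \eqref{1/eps_bound2} on $\ep_2$ guarantees that the coefficient of $\tilde D$ on the left-hand side is at least $\tfrac{1}{2\ep}$ whenever $\ep\le\ep_2$. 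Hence both $\pm\tilde D$ are weak subsolutions of
\[
\partial_t w -\Delta w + \tfrac{w}{2\ep} \le |F|\,,\qquad \partial_\nu w=0\,,
\]
and therefore $|\tilde D|=\max\{\tilde D,-\tilde D\}$ satisfies this differential inequality in the weak sense.

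Next I would estimate $\|F(\cdot,t)\|_{C(\bar\Om)}$ in terms of $\|(\tilde p_A,\tilde p_B)(\cdot,t)\|_{C^1(\bar\Om)}$ for large $t$. The first two terms of $F$ are pointwise bounded by $2\|\nabla p_B\|_{C(\bar\Om)}\|\nabla\tilde p_A\|_{C(\bar\Om)}+2\|\nabla\hat p_A\|_{C(\bar\Om)}\|\nabla\tilde p_B\|_{C(\bar\Om)}$; Lemma \ref{lem:7.4a} gives $\|\nabla p_B(\cdot,t)\|_{C(\bar\Om)}\le C_0$ for $t\ge 1$, and $\|\nabla\hat p_A\|_{C(\bar\Om)}$ is a fixed finite constant because $\hat p_A\in C^2(\bar\Om)$ by elliptic regularity. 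The last two terms of $F$ are bounded by $2\la(\|\alpha\|_{C(\bar\Om)}+\|\be\|_{C(\bar\Om)})\|\hat D\|_{C(\bar\Om)}(\|\tilde p_A\|_{C(\bar\Om)}+\|\tilde p_B\|_{C(\bar\Om)})$, and Corollary \ref{cor:existence} yields $\|\hat D\|_{C(\bar\Om)}=O(\ep)$ (in particular, bounded). Combining these bounds gives a constant $\tilde C>0$, independent of $\ep\le\min\{\ep_1,\ep_2\}$, such that
\[
\|F(\cdot,t)\|_{C(\bar\Om)} \le \tilde C\bigl(\|\tilde p_A(\cdot,t)\|_{C^1(\bar\Om)}+\|\tilde p_B(\cdot,t)\|_{C^1(\bar\Om)}\bigr) \quad\text{for all } t\ge 1\,.
\]

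The third step is a comparison argument. For each $t_0\ge 1$, put $M(t_0):=\sup_{t\ge t_0}\|F(\cdot,t)\|_{C(\bar\Om)}$ (finite by the previous step together with \eqref{eq:ultimate} and point-dissipativity, cf.\ Proposition \ref{prop:2.2}) and define the spatially constant function
\[
\overline{\mathcal D}(t) \;:=\; 2\ep M(t_0) + \bigl(\|\tilde D(\cdot,t_0)\|_{C(\bar\Om)}\bigr)\, e^{-(t-t_0)/(2\ep)}\,.
\]
A direct computation shows $\partial_t\overline{\mathcal D}-\Delta\overline{\mathcal D}+\overline{\mathcal D}/(2\ep)=M(t_0)\ge|F|$ on $\Om\times[t_0,\infty)$, with $\partial_\nu\overline{\mathcal D}=0$, and $\overline{\mathcal D}(t_0)\ge|\tilde D(\cdot,t_0)|$. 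By the weak maximum principle, $|\tilde D(x,t)|\le\overline{\mathcal D}(t)$ for $x\in\bar\Om$ and $t\ge t_0$.

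Finally, sending $t\to\infty$ kills the exponential term, yielding $\limsup_{t\to\infty}\|\tilde D(\cdot,t)\|_{C(\bar\Om)}\le 2\ep M(t_0)$; sending $t_0\to\infty$ afterwards replaces $M(t_0)$ by $\limsup_{t\to\infty}\|F(\cdot,t)\|_{C(\bar\Om)}$, and the estimate on $F$ established above gives \eqref{eq:step3b} with $C_2:=2\tilde C$. No step should present real difficulty: the mild obstacle is bookkeeping — verifying that the supersolution is admissible under Neumann boundary data and that the coefficient-of-$\tilde D$ on the LHS is genuinely $\ge 1/(2\ep)$, which is precisely why $\ep_2$ was introduced in \eqref{1/eps_bound2}. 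This mirrors, essentially verbatim, the structure of the argument already used for $\DA$ in Lemma \ref{lem:7.4}.
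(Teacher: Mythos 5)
Your argument is correct and yields the stated estimate with $C_2=2\tilde C$; it is a legitimate proof. It does, however, package the maximum principle differently from the paper. You run the parabolic supersolution comparison of Lemma \ref{lem:7.4} essentially verbatim: absorb the zeroth-order coefficient into $\tfrac{1}{2\epsilon}$ via \eqref{1/eps_bound2}, bound the forcing $F$ by $\tilde C\,\|(\tilde p_A,\tilde p_B)(\cdot,t)\|_{C^1(\bar\Omega)}$ using \eqref{eq:lem7.4-2} and the regularity of the equilibrium, and compare $|\tilde D|$ against the spatially constant barrier $2\epsilon M(t_0)+\|\tilde D(\cdot,t_0)\|_{C(\bar\Omega)}e^{-(t-t_0)/(2\epsilon)}$. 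The paper instead reduces to a scalar differential inequality $\frac{d}{dt}\tilde D^*+\frac{1}{2\epsilon}\tilde D^*\le\frac{C_2}{2}\tilde p^*$ for the running maximum $\tilde D^*(t)=\max\{\sup_x\tilde D(x,t),0\}$, justified by a Lipschitz-continuity argument for $\tilde D^*$ and an evaluation of \eqref{PDE_tilde D} at spatial maximum points (with the Hopf lemma handling boundary maxima), and then integrates. The two routes buy the same thing; yours has the advantage of reusing the already-established machinery of Lemma \ref{lem:7.4}, while the paper's avoids invoking the notion of weak subsolutions for $|\tilde D|$ at the cost of the Lipschitz/a.e.-differentiability bookkeeping. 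One point you should state more carefully: $\tilde D$ itself is \emph{not} a subsolution of $\partial_t w-\Delta w+\tfrac{w}{2\epsilon}\le|F|$ (where $\tilde D<0$, dropping the excess coefficient $c-\tfrac{1}{2\epsilon}\ge 0$ goes the wrong way). The correct order of operations is: $\pm\tilde D$ are subsolutions of the inequality with the \emph{full} coefficient $c(x,t)=\tfrac{1}{\epsilon}-\la[\alpha(1-2p_A)+\beta(1-2p_B)]$, hence so is their maximum $|\tilde D|$; only then, because $|\tilde D|\ge0$ and $c\ge\tfrac{1}{2\epsilon}$, may the coefficient be lowered to $\tfrac{1}{2\epsilon}$. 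This is a presentational fix, not a gap.
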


\begin{proof}
To prove \eqref{eq:step3b}, we define
$$
\tilde{D}^*(t)=\max\{\sup_{x \in \Omega} \tilde{D}(x,t),0\} \;\text{ and }\; \tilde{p}^*(t) = \|(\tilde{p}_A(\cdot,t), \tilde{p}_B(\cdot,t))\|_{C^1(\bar\Omega)} \,.
$$
We choose, by \eqref{eq:lem7.4-2}, a positive constant $C_2>0$ such that
 the right hand side of \eqref{PDE_tilde D} is bounded from above by $\frac12C_2\tilde{p}^*(t)$ for $t \geq 1$. We claim that $\tilde{D}^*$ satisfies the following differential inequality (in the weak sense)
 \begin{equation}\label{eq:weakdiffeq}
	\frac{d}{dt}\tilde{D}^*(t) + \frac{1}{2\epsilon} \tilde{D}^*(t) \leq \frac{C_2}{2} \tilde{p}^*(t) \quad \text{ for }t \in (1,\infty).
\end{equation}
First, we observe that  $\tilde{D}^*(t)$ is Lipschitz in $[1,\infty)$. For fixed $M>0$ and $t_1, t_2 \in [1,M]$,
we assume without loss that $\tilde{D}^*(t_1) \leq  \tilde{D}^*(t_2)$, and let $\sup_{x \in \Omega} \tilde{D}(x,t_i) = \tilde{D}(x_i,t_i)$ for some $x_i \in \bar\Omega$ ($i=1,2$).  Then
\begin{align*}
	|\tilde{D}^*(t_2) - \tilde{D}^*(t_1)|  &= \max\{\tilde{D}(x_2,t_2),0\} - \max\{\tilde{D}(x_1,t_1),0\} \\
			&\leq \tilde{D}(x_2,t_2) - \tilde{D}(x_1,t_1) \leq \tilde{D}(x_2,t_2) - \tilde{D}(x_2, t_1)
\end{align*}
and thus $[\tilde{D}^*]_{\rm{Lip}([1,M])} \leq \|\partial_t\tilde{D}\|_{C(\bar\Omega \times [1,M])}$, where the latter is finite because $\partial_t\tilde{D}$ is H\"older continuous by parabolic Schauder estimates.

It remains to show that $\tilde{D}^*$ satisfies \eqref{eq:weakdiffeq} whenever it is differentiable. To this end, suppose $\frac{d}{dt}\tilde{D}^*(t_0)$ exists for some $t_0>0$. There are two cases: Case (a) $\sup_{x \in \Omega} \tilde{D}(x,t_0) <0$; Case (b) $\tilde{D}^*(t_0) = \tilde{D}(x_0,t_0) \geq 0$ for some $x_0 \in \bar\Omega$. In Case (a), $\tilde{D}^*(t) = 0$ in a neigborhood of $t_0$ and \eqref{eq:weakdiffeq} holds trivially. For Case (b), we claim that $\Delta \tilde{D}(x_0,t_0) \leq 0$. Assume not, then $\Delta \tilde{D}(x_0,t_0) >0$ and $x_0$ cannot be an interior maximum point. Thus, $x_0 \in \partial\Omega$ and there exists $\delta'>0$ such that
$$
\tilde{D}(x,t_0) < \tilde{D}(x_0,t_0) \,\,\text{ and }\quad \Delta \tilde{D}(x,t_0) >0\quad  \text{in }B_{\delta'}(x_0) \cap \bar\Omega.
$$
But then the Hopf lemma applies to yield that $\partial_\nu \tilde{D}(x_0,t_0) >0$. This is in contradiction with the Neumann boundary condition imposed on $\tilde{D}$ on $\partial\Omega \times (0,\infty).$ Thus, $\Delta \tilde{D}(x_0,t_0) \leq 0$.

With this, we may evaluate \eqref{PDE_tilde D} at $(x_0,t_0)$ to obtain (here the choice of $\epsilon < \epsilon_2$ is needed) 
$$
\frac{\partial}{\partial t} \tilde{D}(x_0,t_0)+ \frac{1}{2\epsilon} \tilde{D} (x_0,t_0)\leq \frac{C_2}{2} \tilde{p}^*(t_0).
$$
Since $\tilde{D}^*$ is differentiable at $t_0$ and $\tilde{D}^*(t_0)=\tilde{D}(x_0,t_0)\geq 0$, we must have $\frac{d}{dt}\tilde{D}^*(t_0) = \frac{\partial}{\partial t} \tilde{D}(x_0,t_0)$, hence we deduce \eqref{eq:weakdiffeq} at $t= t_0$. Since $\tilde{D}^* \in C([0,\infty)) \cap {\rm Lip}\,([1,\infty))$ (and thus absolutely continuous in $[1,\infty)$), and satisfies \eqref{eq:weakdiffeq} at all points where it is differentiable, it satisfies \eqref{eq:weakdiffeq} in the weak sense.

From \eqref{eq:weakdiffeq} we deduce
$$
	\tilde{D}^*(t) \leq \tilde{D}^*(1) e^{-\frac{(t-1)}{2\epsilon}} + \frac{C_2}{2} \int_1^t e^{\frac{-(t-s)}{2\epsilon}} \tilde{p}^*(s)\,ds \, \quad \text{ for }t\geq 1.
$$
This implies
$$
\limsup_{t \to \infty} \left[\sup_{x \in \Omega} \tilde{D}(x,t)\right] \leq  \limsup_{t \to \infty} \tilde{D}^*(t) \leq C_2\epsilon \limsup_{t \to \infty}  \tilde{p}^*(t).
$$
Similarly, we obtain
$$
\liminf_{t \to \infty}  \left[\inf_{x \in \Omega} \tilde{D}(x,t)\right] \geq -C_2\epsilon \liminf_{t \to \infty}  \tilde{p}^*(t)\,,
$$
which proves \eqref{eq:step3b}.
\end{proof}

We are now in the position to prove the main result of this section.

\begin{proof}[Proof of Proposition \ref{prop:final}]
We claim that
\begin{equation}\label{eq:step3a}
\limsup_{t \to \infty} \|(\tilde{p}_A,\tilde{p}_B)(\cdot,t)\|_{C^1(\bar\Omega)} = 0.
\end{equation}
To this end, let $L_{\hat p_A}$ and $L_{\hat p_B}$ be defined according to \eqref{def_L_ph}. By \eqref{eq:corexistence}, we can apply Lemma \ref{lem:U1} and obtain
$$
	\sigma(L_{\hat p_A}) \subset \{z \in \mathbb{C}: \textup{Re}\, z > \delta_0\} \; \text{ and }\, \sigma(L_{\hat p_B}) \subset \{z \in \mathbb{C}: \textup{Re}\, z > \delta_0\}
$$
for some $\delta_0>0$. Hence, we can apply Lemma \ref{lem:U2} to \eqref{eq:subtractp} to deduce
$$
\limsup_{t \to \infty} \|(\tilde{p}_A, \tilde{p}_B)(\cdot,t)\|_{C^1(\bar\Omega)}  \leq C_3 \left[ \left(\limsup_{t \to \infty} \|(\tilde{p}_A,\tilde{p}_B)(\cdot,t)\|_{C^1(\bar\Omega)}\right)^2 + \limsup_{t \to \infty} \|\tilde{D}(\cdot,t)\|_{C(\bar\Omega)} \right] \,.
$$
Now, by \eqref{eq:ultimate} and \eqref{eq:step3b} there exists a constant $C_4$ independent of $\epsilon$ such that
$$
	\limsup_{t \to \infty} \|(\tilde{p}_A, \tilde{p}_B)(\cdot,t)\|_{C^1(\bar\Omega)}  \leq C_4 \epsilon \left[ \limsup_{t \to \infty} \|(\tilde{p}_A,\tilde{p}_B)(\cdot,t)\|_{C^1(\bar\Omega)}\right]\,.
$$
This proves \eqref{eq:step3a} provided $\epsilon < \min\{\ep_1,\ep_2, 1/C_4\}$.

Finally, the estimates \eqref{eq:step3a} and \eqref{eq:step3b} imply
$$
\limsup_{t \to \infty} \|(\tilde{p}_A, \tilde{p}_B)(\cdot,t)\|_{C^1(\bar\Omega)}  = \limsup_{t \to \infty} \|\tilde{D}(\cdot,t)\|_{C(\bar\Omega)}  = 0\,,
$$
i.e., $(p_A(\cdot,t), p_B(\cdot,t), D(\cdot,t)) \to (\hat{p}_A, \hat{p}_B, \hat{D})$ in $C^1(\bar\Omega) \times C^1(\bar\Omega) \times C(\bar\Omega)$ as $t\to\infty$. In particular, $ (\hat{p}_A, \hat{p}_B, \hat{D})$ is the unique internal equilibrium of \eqref{eq:ABD_add}. As before, for each fixed $\epsilon>0$, we may apply parabolic regularity theory to strengthen the above convergence to $[C^2(\bar\Omega)]^3$.   This completes the proof.
\end{proof}

\section{Discussion}\label{sec:Disc}
The aim of this work was the establishment of conditions for existence, uniqueness, and stability of two-locus clines. This has been achieved for two limiting cases: weak recombination ($\rh\ll1$, Theorem \ref{thm:weak_reco}) and strong recombination ($\rh\gg1$, Theorem \ref{thm:7.1}). In the latter case, even global asymptotic stability could be proved, whereas in the former case only existence and linear stability were proved. For general strength of recombination, the problem remains largely unresolved, and the equilibrium structure and dynamics are likely more complex.

We conjecture that for intermediate recombination rates and if the strength of selection relative to diffusion is in a certain range, an internal equilibrium, i.e., a two-locus cline, can be simultaneously stable with a boundary equilibrium. For a related ODE model, in which there is unidirectional migration from one deme into an other deme, this was proved in \cite{BuergerAkerman2011}. Numerical solution of the system \eqref{eq:ABD_add} supports this conjecture (RB, unpublished).

A global convergence result that applies to arbitrary recombination is Theorem \ref{thm:M1_globallystable}. It shows that for every fixed $r\ge 0$ and $s>0$, there exists $d_0=d_0(r,s)\gg1$ such that the monomorphic equilibrium with the highest spatially averaged fitness is globally asymptotically stable if $d>d_0$. We conjecture that for given $s>0$, $d_0$ can be chosen even independent of $r\ge0$; in other words, there exists $\la_0\ll 1$ such that such monomorphic equilibrium is globally asymptotically stable for \eqref{dynamics_pi} if $\la<\la_0$ (Remark \ref{rem:4.7}).

A limiting case, for which we also conjecture existence of a globally asymptotically stable two-locus cline is that of weak migration relative to selection and recombination ($d\ll1$). However, this limit is degenerate. For a single locus, profiles of the clines were derived in this limit under various assumptions about dominance in \cite{LN2004} and \cite{NNS2010}. There are other cases that should be amenable to a rigorous analysis.

For a finite number of demes, several limiting cases were studied rigorously in \cite{RB2009}. In such discrete-space models, selection and recombination in each deme are described by difference equations (if generations are discrete) or ODEs (if generations are overlapping), and migration between demes is modeled by an ergodic matrix. In \cite{RB2009}, global convergence results were proved for weak migration and for strong migration, subject to additional, also scaling, assumptions which guaranteed that the set of chain-recurrent points of an appropriate limiting system consists of hyperbolic equilibria only. There, an arbitrary number of multiallelic loci was admitted as well as selection schemes with dominance and epistasis. Despite this additional complications (which enable multiple stable equilibria), the proofs (if restricted to two diallelic loci) are simpler than here, and also different, because they rely on methods and results developed in \cite{NHB1999} and invoke perturbation theory of compact normally hyperbolic manifolds and of chain-recurrent sets for dynamical systems on compact state spaces. The case of strong recombination is briefly outlined in \cite[Section 7.9]{RB2014}. For the special case of two diallelic loci and additive fitnesses as in \eqref{s1234} and \eqref{S_i}, the result given there reduces to an analogue of the present Theorem~\ref{thm:7.1}.

It is of considerable biological interest to study how the shape of a cline depends on the underlying parameters. In the present context, population genetic intuition suggests that the two-locus cline becomes steeper with stronger linkage, i.e., smaller $r$ (hence $\rh$), provided the functions $\a$ and $\be$ have the same sign. The reason is that positive linkage disequilibrium (covariance) between the loci will be generated in this case, so that a kind of mutual reinforcement emerges. Support for this conjecture comes from numerical results and formal calculations \cite{Barton1983,Barton1986,Slatkin1975}, as well as from related ODE models  \cite{AkermanBuerger2014,BuergerAkerman2011,GeroldingerBuerger2015}. For a step environment on the real line, i.e., if each of $\a(x)$ and $\be(x)$ assume only two values and change sign at the same location, the slope of each of the allele-frequency clines ($p_A$, $p_B$) at the step was shown to increase with decreasing $\rh$ provided $\rh$ was sufficiently large \cite{RB2017}. This was done by deriving an explicit first-order approximation of the two-locus cline. It would be of interest to show similar results for the allele-frequency clines of the present model, possibly following \cite{LL2011} and using $||\nabla p_A||_{L^2(\Om)}$ as a measure of the steepness.

Throughout the present paper, we assumed an open bounded domain. It would be desirable and challenging to develop an analogous theory for unbounded domains. For one locus with two alleles, various results on the existence, uniqueness and stability of clines were derived in \cite{Conley1975} and \cite{Fife&Peletier1977}. In particular, Conley \cite{Conley1975} showed that a cline exists if the function describing the influence of environmental variation, say $h(x)$, is not integrable near $\pm\infty$ and $\sgn h(x) = \sgn x$. Therefore, in contrast to a bounded domain, a cline exists independently of the strength of diffusion relative to selection (see also \cite{RB2017} for the two-locus model with a step environment). For the two-locus case, one may conjecture that a two-locus cline exists if both $\a(x)$ and $\be(x)$ satisfy these conditions on $h(x)$.

Another general assumption was that the functions $\a(x)$ and $\be(x)$ change sign in $\Om$, i.e., (A). For a single locus, it is well known that in the absence of a sign change, one of the trivial equilibria is globally asymptotically stable (eg.\ \cite{LN2002,LNN2013}). Assume that $\be(x)$ does not change sign, but $\a(x)$ does. Then the results in Section \ref{sec:ev-problems} imply that $\la_B=\la_\be=\infty$. Therefore, we can follow the proof of Theorem \ref{thm:7.1}(a) to show global convergence to a boundary equilibrium for every $\la>0$.

In Theorem \ref{thm:7.1}, the degenerate case $\la=\max\{\la_A,\la_B\}$ was excluded. Assuming $\la=\la_A > \la_B$,
our results in Section \ref{sec:strong_reco} show that $\th_\a=0$ (or $\th_\a=1$) and $0<\th_\be<1$. Straightforward linearization is insufficient to determine whether the perturbation of the equilibrium $(\th_\a,\th_\be,0)$ is in the state space or not. We expect that a sufficient condition for the existence of an internal equilibrium for large $\rh$ is that $\a(x)$ and $\be(x)$ have the same sign.

{\bf Acknowledgments.} The authors gratefully acknowledge helpful discussions with Profs.\ Josef Hofbauer and Yuan Lou, inspiring communication with Prof.\ Thomas Nagylaki, and useful comments by T.\ Nagylaki and an anonymous reviewer. LS and RB were supported by the Austrian Science Fund (FWF) through Grant P25188-N25. LS was also supported by the National Natural Science Foundation of China (NSFC), Grant 11501283.

\end{document}